\documentclass{amsart}
\usepackage{amssymb}
\usepackage{amsmath, amscd}
\usepackage{amsthm}
\usepackage{mathrsfs}
\usepackage{perpage}
\usepackage[all,cmtip]{xy}
\usepackage{setspace}
\usepackage{amsbsy}
\usepackage[foot]{amsaddr}
\usepackage{url}
\usepackage[T1]{fontenc}
\usepackage{verbatim}
\usepackage{mathrsfs}
\usepackage{ifthen}
\usepackage{blkarray}
\usepackage{multirow}
\usepackage{enumitem}
\usepackage{chngcntr}
\usepackage{hyperref}
\usepackage{etoolbox}
\usepackage{tikz}
\usetikzlibrary{matrix,arrows,decorations.pathmorphing}
\usepackage[normalem]{ulem}
\usepackage[left=2cm,head=13pt,
top=1.5cm,right=2cm, bottom=1.5cm]{geometry}

\setenumerate[0]{label=(\alph*)}

\numberwithin{equation}{subsection}

\setcounter{secnumdepth}{3}

\AtBeginEnvironment{equation}{\setcounter{equation}{\value{subsubsection}}}{}{}{}
\AtEndEnvironment{equation}{\stepcounter{subsubsection}}{}{}{}

\setcounter{tocdepth}{1}
\newtheorem{theorem}[subsubsection]{Theorem}
\newtheorem{lemma}[subsubsection]{Lemma}

\newtheorem{example}[subsubsection]{Example}
\newtheorem{corollary}[subsubsection]{Corollary}
\newtheorem{definition}[subsubsection]{Definition}
\newtheorem{question}[subsubsection]{Question}

\newtheorem{proposition}[subsubsection]{Proposition}

\newtheorem{lemmaapp}[subsection]{Lemma}
\newtheorem{corapp}[subsection]{Corollary}

\theoremstyle{remark}

\newtheorem{rmk}[subsubsection]{Remark}

\newtheorem{rmkapp}[subsection]{Remark}
\newtheorem{construction}[subsubsection]{Construction}

\title[quasi-constant-classification]{Quasi-constant characters:
\\
Motivation, classification and applications }

\numberwithin{equation}{subsection}

\newcommand{\GZip}{\mathop{\text{$G$-{\tt Zip}}}\nolimits}
\newcommand{\gzipmu}{\GZip^{\mu}}

\newskip\procskipamount
\procskipamount=6pt plus1pt minus1pt

\newskip\interskipamount
\interskipamount=12pt plus1pt minus1pt

\newskip\refskipamount
\refskipamount=6pt plus1pt minus1pt

\newcommand{\procskip}{\vskip\procskipamount}
\newcommand{\interskip}{\vskip\interskipamount}
\newcommand{\refskip}{\vskip\refskipamount}

\newcommand{\procbreak}{\par
   \ifdim\lastskip<\procskipamount\removelastskip
   \penalty-100
   \procskip\fi
   \noindent\ignorespaces}

\newcommand{\titlebreak}{\par%
\ifdim\lastskip<\interskipamount\removelastskip%
\penalty10000%
\interskip\fi%
\noindent}%

\newcommand{\interbreak}{\par%
\ifdim\lastskip<\interskipamount\removelastskip%
\penalty-100%
\interskip\fi%
\noindent\ignorespaces}%

\newcommand{\refbreak}{\par%
\ifdim\lastskip<\refskipamount\removelastskip%
\penalty-100%
\refskip\fi%
\noindent\ignorespaces}%

\newcounter{listcounter}
\newcounter{deflistcounter}
\newcounter{equivcounter}

\newskip{\itemsepamount}
\itemsepamount=0pt plus1pt minus0pt

\newskip{\topsepamount}
\topsepamount=0pt plus2pt minus0pt


\newenvironment{assertionlist}{%
  \begin{list}
    {\upshape (\arabic{listcounter})}
    {\setlength{\leftmargin}{18pt}
     \setlength{\rightmargin}{0pt}
     \setlength{\itemindent}{0pt}
     \setlength{\labelsep}{5pt}
     \setlength{\labelwidth}{13pt}
     \setlength{\listparindent}{\parindent}
     \setlength{\parsep}{0pt}
     \setlength{\itemsep}{\itemsepamount}
     \setlength{\topsep}{\topsepamount}
     \usecounter{listcounter}}}
  {\end{list}}



\newenvironment{definitionlist}{%
  \begin{list}
    {\upshape (\alph{deflistcounter})}
    {\setlength{\leftmargin}{18pt}
     \setlength{\rightmargin}{0pt}
     \setlength{\itemindent}{0pt}
     \setlength{\labelsep}{5pt}
     \setlength{\labelwidth}{13pt}
     \setlength{\listparindent}{\parindent}
     \setlength{\parsep}{0pt}
     \setlength{\itemsep}{\itemsepamount}
     \setlength{\topsep}{\topsepamount}
     \usecounter{deflistcounter}}}
  {\end{list}}


%


\newenvironment{equivlist}{%
  \begin{list}
    {\upshape (\roman{equivcounter})}
    {\setlength{\leftmargin}{18pt}
     \setlength{\rightmargin}{0pt}
     \setlength{\itemindent}{0pt}
     \setlength{\labelsep}{5pt}
     \setlength{\labelwidth}{13pt}
     \setlength{\listparindent}{\parindent}
     \setlength{\parsep}{0pt}
     \setlength{\itemsep}{\itemsepamount}
     \setlength{\topsep}{\topsepamount}
     \usecounter{equivcounter}}}
  {\end{list}}






\newcommand{\Kcal}{{\mathcal K}}

\newcommand{\Pcal}{{\mathcal P}}

\newcommand{\mfr}{{\mathfrak m}}

\newcommand{\rfr}{{\mathfrak r}}

\newcommand{\Dfr}{{\mathfrak D}}

\newcommand{\CC}{\mathbf{C}}

\newcommand{\FF}{\mathbf{F}}
\newcommand{\GG}{\mathbf{G}}
\newcommand{\HH}{\mathbf{H}}

\newcommand{\LL}{\mathbf{L}}

\newcommand{\PP}{\mathbf{P}}
\newcommand{\QQ}{\mathbf{Q}}
\newcommand{\RR}{\mathbf{R}}
\renewcommand{\SS}{\mathbf{S}}
\newcommand{\TT}{\mathbf{T}}

\newcommand{\XX}{\mathbf{X}}
\newcommand{\YY}{\mathbf{Y}}
\newcommand{\ZZ}{\mathbf{Z}}

\DeclareMathOperator{\Pic}{Pic}

\newcommand{\Lscr}{{\mathscr L}}

\newcommand{\Sscr}{{\mathscr S}}

\newcommand{\Vscr}{{\mathscr V}}


\newcommand{\cent}{{\rm Cent}}
\newcommand{\gm}{\GG_{\textnormal{m}}}

\newcommand{\qbar}{\overline{\mathbf Q}}

\newcommand{\fp}{\mathbf F_p}



\newcommand{\fpbar}{\overline{\FF}_p}
\newcommand{\galq}{{\rm Gal}(\qbar / \QQ)}
\newcommand{\galfp}{{\rm Gal}(\fpbar / \fp)}











\newcommand{\chargp}{X^*}
\newcommand{\cochargp}{X_*}

\newcommand{\chargpt}{\chargp(T)}

\newcommand{\cochargpt}{\cochargp(T)}

%

\newcommand{\zgeqo}{\mathbf Z_{\geq 1}}
\newcommand{\zgeqz}{\mathbf Z_{\geq 0}}
\newcommand{\qgeqz}{\QQ_{\geq 0}}

\DeclareMathOperator{\ad}{ad}

\DeclareMathOperator{\an}{an}

\DeclareMathOperator{\der}{der}

\DeclareMathOperator{\GS}{GS}

\DeclareMathOperator{\Lie}{Lie}

\DeclareMathOperator{\pr}{pr}

\DeclareMathOperator{\res}{Res}

\DeclareMathOperator{\Sh}{Sh}

\DeclareMathOperator{\stab}{Stab}
\DeclareMathOperator{\std}{Std}



\newcommand{\kbar}{\bar k}
\newcommand{\gal}{{\rm Gal}}

\newcommand{\galk}{\gal(\overline{k}/k)}


\newcommand{\gofr}{\mathbf G(\mathbf R)}




\newcommand{\gofaf}{\mathbf G(\mathbf A_f)}

\newcommand{\af}{\mathbf A_f}







\newcommand{\Th}{{\rm Th.}}
\newcommand{\Ths}{{\rm Ths.}}
\newcommand{\Rmk}{{\rm Rmk.}}

\newcommand{\Cor}{{\rm Cor.}}

\newcommand{\Chap}{{\rm Chap.}}

\newcommand{\Def}{{\rm Def.}}
\newcommand{\Defs}{{\rm Defs.}}
\newcommand{\Prop}{{\rm Prop.}}

\newcommand{\loccit}{{\em loc.\ cit. }}
\newcommand{\loccitn}{{\em loc.\ cit.}}

\newcommand{\opcitn}{{\em op.\ cit.}}

\newcommand{\Cf}{{\em Cf. }}
\newcommand{\cf}{{\em cf. }}

\newcommand{\ie}{i.e.,\ }
\newcommand{\eg}{e.g.,\ }


\newcommand{\diag}{{\rm diag}}

\newcommand{\fil}{{\rm Fil}}

\newcommand{\gx}{(\GG, \XX)}
\newcommand{\shgx}{\Sh\gx}
\newcommand{\shgxk}{\shgx_{\mathcal K}}


\newcommand{\gsgx}{\GS\gx}
\newcommand{\gsgxk}{\gsgx_{\Kcal}}

\newcommand{\llambda}{\Lscr(\lambda)}

\newcommand{\dR}{{\rm dR}}

\newcommand{\xg}{\mathbf X_{g}}

\newcommand{\shdagsp}{(GSp(2g), \xg)}


\email{wushijig@gmail.com}
\address{W. G. Department of Mathematics, Stockholm University, Stockholm SE-10691, Sweden}

\email{jeanstefan.koskivirta@gmail.com}

\address{J.-S. K. Department of Mathematics, South Kensington Campus,
Imperial College London,
London
SW7 2AZ, UK }

\date{\today}
\author{Wushi Goldring and Jean-Stefan Koskivirta}
\begin{document}
\begin{abstract} In \cite{Goldring-Koskivirta-Strata-Hasse}, initially motivated by questions about the Hodge line bundle of a Hodge-type Shimura variety, we singled out a generalization of the notion of {\em minuscule character} which we termed {\em quasi-constant}. Here we prove that the character of the Hodge line bundle is always quasi-constant. Furthermore, we classify the quasi-constant characters of an arbitrary connected, reductive group over an arbitrary field. As an application,  we observe that, if $\mu$ is a quasi-constant cocharacter of an $\fp$-group $G$, then  our construction of group-theoretical Hasse invariants in \loccit applies to the stack $\gzipmu$, without any restrictions on $p$, even if the pair $(G, \mu)$ is not of Hodge type and even if $\mu$ is not minuscule. We conclude with a more speculative discussion of some further motivation for considering quasi-constant cocharacters in the setting of our program outlined in \loccit

\end{abstract}

\pagestyle{plain}
\maketitle
\tableofcontents
\newpage
\section{Introduction}
\label{sec-intro}
This paper is the fourth installment in a series on our program to connect the three areas (A) {\em Automorphic Algebraicity}, (B) {\em $G$-Zip-Geometricity} and (C) {\em Griffiths-Schmid Algebraicity}.
Our program was introduced in \cite{Goldring-Koskivirta-Strata-Hasse} and developed further in \cite{Goldring-Koskivirta-zip-flags,Goldring-Koskivirta-Diamond-I}. For more advances in the program, see our forthcoming joint work with Stroh and Brunebarbe \cite{Brunebarbe-Goldring-Koskivirta-Stroh-ampleness}. Some key aspects of the program are discussed in \S\ref{sec-test-case} below.

The present paper dissects the notion of `quasi-constant character' introduced in \cite[\Def~N.5.3]{Goldring-Koskivirta-Strata-Hasse}. The idea behind quasi-constancy is to isolate those (co)characters which are simplest from the point of view of pairings with Weyl-Galois orbits of (co)roots.  The quasi-constant condition simultaneously incorporates those of `minuscule' and `cominuscule'. As observed in \loccitn, it is also well-adapted to the study of (i) the Hodge line bundle of a symplectic embedding of Shimura
varieties, (ii) the existence of group-theoretical Hasse invariants on stacks $\gzipmu$. 

The following recalls the definition of quasi-constant characters and proceeds to summarize the topics covered in the main body of the text.

\subsection{Quasi-constant characters}
\label{sec-intro-quasi-constant}
Throughout this article, fix a field $k$ and a connected, reductive $k$-group $G$.
Let $T$ be a maximal torus in $G$ (defined over $k$). Write \begin{equation}
\label{eq-root-datum-G}
(X^*(T), \Phi; X_*(T), \Phi^{\vee})\end{equation} for the root datum of the pair $(G_{\bar k}, T_{\bar k})$, where:
$X^*(T)$ (resp. $X_*(T)$) denotes the character (resp. cocharacter) group of $T_{\bar k}$ and  $\Phi=\Phi(G,T)$ (resp. $\Phi^{\vee}=\Phi^{\vee}(G,T)$) denotes the set of roots (resp. coroots) of $T_{\kbar}$ in $G_{\kbar}$. Denote the perfect pairing $X^*(T) \times X_*(T) \to \ZZ$ by $\langle, \rangle$. Set $W$ to be the Weyl group of $T_{\kbar}$ in $G_{\kbar}$.

In \cite{Goldring-Koskivirta-Strata-Hasse}, our investigation of Hasse invariants on Ekedahl-Oort strata of Hodge-type Shimura varieties led us to single out the following notion (see \Def~N.5.3 of \loccitn):

\begin{definition} A character $\chi \in X^*(T)$ is \uline{quasi-constant} if, for every root $\alpha \in \Phi$ satisfying $\langle \chi , \alpha^{\vee} \rangle \neq 0$ and all $\sigma \in W \rtimes \galk $, one has $$  \frac{\langle \chi, \sigma \alpha^{\vee} \rangle}{\langle \chi, \alpha^{\vee} \rangle } \in \{-1,0,1\}. $$
\label{def quasi constant} \end{definition}

Note the resemblance with the definition of `minuscule character' (recalled in \S\ref{sec-minuscule}). One defines quasi-constant cocharacters in the same way, by replacing coroots with roots. It is then clear that, over an algebraically closed field, a quasi-constant character (resp. cocharacter) of a pair $(G, T)$ is the same thing as a quasi-constant cocharacter (resp. character) of the dual pair $(G^{\vee}, T^{\vee})$ associated to the dual of the root datum~\eqref{eq-root-datum-G}.
\subsection{Classification} 
\label{sec-intro-classification}
One of the main results of this paper is a classification of quasi-constant (co)characters for any connected reductive group $G$, over any field $k$.  
The classification is given in two steps: \Th~\ref{th-quasi-constant-cominuscule} treats the case that $k$ is algebraically closed and $G$ is simple and simply-connected (resp. adjoint). \Th~\ref{th-gen-quasi-constant-cominuscule} explains how the general classification reduces to the former special case. Note that in this paper, `semisimple and simply-connected' is always taken in the sense of root data, \ie it means that the $\ZZ$-span of $\Phi^{\vee}$ is $X_*(T)$.

The classification of quasi-constant (co)characters is in terms of minuscule and cominuscule (co)characters. For the convenience of the reader, the latter two notions are recalled in \S\S\ref{sec-minuscule}--\ref{sec-cominuscule}.

\begin{theorem}
\label{th-quasi-constant-cominuscule}
Suppose $k$ is an algebraically closed field and $G$ is a simple and simply-connected (resp. adjoint) $k$-group. A character (resp. cocharacter) of $T$ is quasi-constant if and only if it is a multiple of one which is either minuscule or cominuscule.
\end{theorem}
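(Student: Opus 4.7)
The plan is to first exploit the hypothesis $k = \bar k$ to trivialize the Galois factor in Definition~\ref{def quasi constant}: quasi-constancy of $\chi$ then becomes the statement that, on each $W$-orbit of $\Phi^\vee$, the pairing $\langle \chi, \cdot\rangle$ takes values in some three-element set $\{-c, 0, c\}$. Since $G$ is simple, $W$ acts on $\Phi^\vee$ with one orbit in the simply-laced case and two orbits (short and long coroots) otherwise, giving at most two constants $c_{\mathrm{sh}}, c_{\mathrm{lg}}$. Both quasi-constancy and the property of being a multiple of a (co)minuscule character are $W$-invariant, so I may translate $\chi$ by an element of $W$ to assume it is dominant. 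In the simply-connected case the fundamental weights $\omega_i$ form a $\ZZ$-basis of $X^*(T)$, so write $\chi = \sum a_i \omega_i$ with $a_i = \langle \chi, \alpha_i^\vee\rangle \geq 0$.

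For the forward implication, each $a_i$ lies in $\{0, c_{\mathrm{sh}}\}$ or $\{0, c_{\mathrm{lg}}\}$ according to the length class of $\alpha_i^\vee$. I would then pair $\chi$ against the highest short coroot $\alpha_{\mathrm{sh}}^\vee = \sum m_i^{\mathrm{sh}} \alpha_i^\vee$ and the highest long coroot $\alpha_{\mathrm{lg}}^\vee = \sum m_i^{\mathrm{lg}} \alpha_i^\vee$, whose coefficients are classically tabulated in each simple type. Quasi-constancy then yields $\sum m_i^{\mathrm{sh}} a_i \in \{0, c_{\mathrm{sh}}\}$ and $\sum m_i^{\mathrm{lg}} a_i \in \{0, c_{\mathrm{lg}}\}$. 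Combined with the length-dependent bounds on each $a_i$, these two linear constraints force at most one $a_i$ to be nonzero; a short case-by-case inspection in each simple type identifies that unique $\omega_i$ as either minuscule or cominuscule. In $F_4$ and $G_2$, no index $i$ is admissible, forcing $\chi = 0$, in agreement with the absence of (co)minuscule fundamental weights in those types.

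The reverse implication is routine: a minuscule character has coroot pairings in $\{0, \pm 1\}$, so any integer multiple is quasi-constant. For a cominuscule $\omega_i$, the constancy up to sign on each $W$-orbit of coroots is verified directly from the explicit description of $\omega_i$ in the relevant non-simply-laced type (in simply-laced types, minuscule and cominuscule coincide). The adjoint case for cocharacters is then deduced from the simply-connected character case applied to the dual group $G^\vee$: the duality recalled in \S\ref{sec-intro-quasi-constant} exchanges characters with cocharacters, simply-connected with adjoint, and minuscule with cominuscule.

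The step I expect to demand the most care is the forward rigidity argument: although the two linear inequalities arising from the highest short and highest long coroots look tight enough to be uniform, in practice they must be checked type by type using the explicit coefficient vectors $(m_i^{\mathrm{sh}})$ and $(m_i^{\mathrm{lg}})$; the non-simply-laced types $B_n, C_n, F_4, G_2$ require the most careful bookkeeping, and the exceptional types $F_4, G_2$ are the most restrictive, since the minimum of $\{m_i^{\mathrm{sh}}, m_i^{\mathrm{lg}}\}$ there exceeds $1$ for every $i$ and kills every nonzero candidate.
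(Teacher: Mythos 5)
Your proposal is correct and its overall architecture matches the paper's (reduce to a $\Delta$-dominant character, show it must be a multiple of a single fundamental weight, then identify which fundamental weights are quasi-constant). Where you diverge is in the key rigidity step for the multi-laced case. The paper's Lemma~\ref{lem fundamental} achieves the reduction to a fundamental weight via a uniform argument that does not touch the explicit coefficient vectors: it produces a coroot $\delta^\vee$ as a partial sum of the highest coroot ${}^h\alpha^\vee$ whose pairing with $\chi$ strictly interpolates between $\langle\chi,\beta^\vee\rangle$ and $\langle\chi,{}^h\alpha^\vee\rangle$, giving three distinct positive values and hence a $W$-orbit on which $\langle\chi,\cdot\rangle$ takes two distinct positive values. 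You instead impose the two constraints coming from the dominant coroot of each $W$-orbit (namely ${}^h\alpha^\vee$ and $({}^h\alpha)^\vee$). This works equally well once one observes that every simple coroot appears with coefficient $\geq 1$ in both dominant coroots, so two nonzero $a_i$'s immediately overshoot the allowed pairing; in fact your constraints are tighter than you suggest, because for $\alpha_i$ long the coefficient of $\alpha_i^\vee$ in $({}^h\alpha)^\vee$ equals $m(\alpha_i)$, so the constraint $m_i^{\mathrm{sh}}=1$ directly says $\alpha_i$ is special (i.e.\ $\omega_i$ cominuscule), and dually for $\alpha_i$ short the constraint $m_i^{\mathrm{lg}}=m^\vee(\alpha_i)=1$ says $\omega_i$ is minuscule. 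Thus the "short case-by-case inspection" you expect to need can be replaced by these two observations, making your route arguably a bit more economical than the paper's (the paper still does a separate case-by-case in Lemmas~\ref{lem-G2},~\ref{lem-F4},~\ref{lem-BC} after Lemma~\ref{lem fundamental}). One small slip worth flagging: Langlands duality does not exchange `minuscule' with `cominuscule'; it preserves each of them while swapping characters with cocharacters and simply-connected with adjoint. This does not affect your deduction of the adjoint cocharacter case from the simply-connected character case, since the disjunction ``minuscule or cominuscule'' is stable either way, but the statement as written is inaccurate.
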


\begin{rmk} \label{rmk-cominuscule} It is clear from the definitions that a multiple of a minuscule (co)character is quasi-constant. Moreover, if $\chi \in \chargpt$ is cominuscule but not minuscule, then by looking at tables (\cf Bourbaki \cite[\Chap~VI, Planches I-IX]{bourbaki-lie-4-6} or Knapp \cite[Appendix~C.1-C.2]{Knapp-beyond-intro-book}) one finds that $G$ is of type $B_n$ or $C_n$ ($n \geq 2$); in type $C_n$ the character $\chi$ is (conjugate to) the fundamental weight corresponding to the unique long simple root, while in type $B_n$ it is (conjugate to) the fundamental weight corresponding to the extremal vertex of the Dynkin diagram which is farthest from the unique short simple root. In both of these cases, one checks that $\chi$ is quasi-constant. Thus, the primary content of
\Th~\ref{th-quasi-constant-cominuscule} is that there are no other characters which are quasi-constant. \end{rmk}
\begin{rmk} \label{part 1 implies part 2 of thm} In the same vein as \Rmk~\ref{rmk-cominuscule}, when the root system $\Phi$ is simply-laced, the quasi-constant characters of $T$ are exactly the multiples of the minuscule ones.
\end{rmk}

Let $\tilde{G}$ be the simply-connected cover of the derived subgroup of $G$. Let $G^{\ad}$ denote the adjoint quotient of $G$.

\begin{theorem}
\label{th-gen-quasi-constant-cominuscule}

Suppose $k$ is an arbitrary field and $G$ is a connected, reductive $k$-group.
 
\begin{enumerate}
\item \label{item-red-k-simple}
A character (resp. cocharacter) of $T$ is quasi-constant if and only if its pullback to every $k$-simple factor of $\tilde G$ is quasi-constant (resp. its projection to every $k$-simple factor of $G^{\ad}$ is quasi-constant). 
\item 
\label{item-k-simple}
Suppose $G$ is $k$-simple and simply-connected (resp. adjoint).  
A character (resp. cocharacter) of $T$ is quasi-constant if and only if, with respect to the $\bar k$-simple factors of $\tilde G_{\bar k}$ (resp. $G_{\bar k}^{\ad}$), it has the form $m(\xi_1, \ldots , \xi_d)$, where $m, \xi_1, \ldots , \xi_n$ satisfy  
\begin{enumerate}[label=(\roman*)]
\item $m \in \zgeqo$;
\item Every $\xi_i$ is either trivial, minuscule or cominuscule;
\item The nontrivial $\xi_j$ are either all minuscule or all cominuscule.
\end{enumerate}
\end{enumerate}
 \end{theorem}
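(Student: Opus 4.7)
The plan is to establish the two parts in turn, using Theorem \ref{th-quasi-constant-cominuscule} as input for part (b).

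For part (a), the quasi-constant condition depends only on the pairings of characters with coroots (resp.\ cocharacters with roots). For characters, the pullback $X^*(T) \to X^*(\tilde T)$ preserves such pairings because the coroots of $G$ and $\tilde G$ agree under the central isogeny. Decomposing $\tilde G = \prod_\ell \tilde G_\ell$ into $k$-simple factors (each defined over $k$), the induced decomposition $X^*(\tilde T) = \bigoplus_\ell X^*(\tilde T_\ell)$ is $W \rtimes \galk$-equivariant, and each coroot lies in a unique factor. Hence, for $\chi = (\chi_\ell)$ and any coroot $\alpha^\vee$ in factor $\ell$, one has $\langle \chi, \sigma\alpha^\vee\rangle = \langle \chi_\ell, \sigma\alpha^\vee\rangle$ for every $\sigma \in W \rtimes \galk$, yielding the equivalence. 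The cocharacter case is strictly dual, via $G \to G^{\ad}$.

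For part (b), treat the character case. Assume $G$ is $k$-simple and simply-connected; write $G_{\bar k} = \prod_{j=1}^d G_j$ with $G_j$ absolutely simple (and $\galk$ transitive on the factors), and set $\chi = (\xi_1, \ldots, \xi_d)$. The forward direction proceeds in three steps. First, restricting to $\sigma = (w,1)$ with $w \in W_j$ shows each $\xi_j$ is quasi-constant in $G_j$ over $\bar k$, so by Theorem \ref{th-quasi-constant-cominuscule}, $\xi_j$ is either zero or of the form $m_j \eta_j$ with $\eta_j$ minuscule or cominuscule and $m_j \in \zgeqo$. Second, for $\tau \in \galk$ with $\tau(j_0) = j_1$ and $\xi_{j_0}, \xi_{j_1}$ both nontrivial of the same type, the element $\tau$ induces a length-preserving isomorphism of root data $G_{j_0} \to G_{j_1}$; applying quasi-constancy in both directions (via $\tau$ and $\tau^{-1}$, with suitable Weyl elements) and using $W$-transitivity on each coroot length class yields $m_{j_0} = m_{j_1}$. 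By transitivity of $\galk$, all nonzero $m_j$ equal a common $m \in \zgeqo$. Third, to rule out a mixed scenario where $\eta_{j_0}$ is minuscule and $\eta_{j_1}$ cominuscule-but-not-minuscule, note by Remark \ref{rmk-cominuscule} that this forces type $B_n$ or $C_n$; comparing ratios on the coroot length class where $\eta_{j_1}$ attains $\pm 2$ gives $2 m_{j_1} \leq m_{j_0}$, while the reverse direction on the other length class gives $m_{j_0} \leq m_{j_1}$. Together these force $m_{j_1} \leq 0$, a contradiction. Hence the nontrivial $\eta_j$ are uniformly minuscule or uniformly cominuscule.

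The backward direction of (b) is a direct verification: for $\chi = m(\eta_1, \ldots, \eta_d)$ of the prescribed form, $\alpha^\vee$ in factor $j_0$ with $\langle \xi_{j_0}, \alpha^\vee\rangle \neq 0$, and $\sigma = (w, \tau)$ with $j_1 = \tau(j_0)$, the coroot $\sigma\alpha^\vee$ sits in $G_{j_1}$ with the same length as $\alpha^\vee$. If $\eta_{j_1} = 0$, the ratio is zero. Otherwise, since $\eta_{j_0}, \eta_{j_1}$ are both minuscule (pairings in $\{0, \pm 1\}$) or both cominuscule (pairings in $\{0, \pm 1\}$ on one length class and $\{0, \pm 2\}$ on the other in types $B_n, C_n$, by Remark \ref{rmk-cominuscule}), the ratio $\langle \xi_{j_1}, \sigma\alpha^\vee\rangle/\langle \xi_{j_0}, \alpha^\vee\rangle$ lies in $\{-1, 0, 1\}$.

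The main obstacle is the third step of the forward direction in (b): ruling out Galois-mixing of a minuscule factor with a cominuscule-but-not-minuscule one. This rests on Remark \ref{rmk-cominuscule}, on the preservation of coroot lengths under $\galk$, and on the careful short/long coroot comparison sketched above.
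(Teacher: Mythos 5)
Your proof is correct and follows essentially the same strategy as the paper's: reduce to $k$-simple factors via the central isogeny in part (a), then in part (b) use Theorem~\ref{th-quasi-constant-cominuscule} per factor, Galois transitivity to compare factors pairwise, and a short/long coroot comparison to exclude the mixed minuscule/cominuscule scenario in the multi-laced ($B_n$, $C_n$) case. The only cosmetic difference is in the multi-laced exclusion: the paper computes the sets of absolute pairings with the orbits $\tilde O_1$, $\tilde O_{1/2}$ (resp.\ $\tilde O_2$, $\tilde O_1$) and observes they contain two distinct nonzero values, while you phrase the same computation as a pair of inequalities $2m_{j_1}\leq m_{j_0}$ and $m_{j_0}\leq m_{j_1}$ derived from applying quasi-constancy in both directions across the two length classes; both arrive at the same contradiction $m_{j_1}\leq 0$.
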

\begin{rmk} A particularly easy case of \Th~\ref{th-gen-quasi-constant-cominuscule} is the following: Assume $G$ is absolutely simple. Then $\chi \in X^*(T)$ (resp. $\mu \in X_*(T)$) is quasi-constant if and only if its pullback to $\tilde G$ (resp. projection onto $G^{\ad}$) is.

\end{rmk}
\subsection{Duality}
\label{sec-intro-duality} 
When $G$ is semisimple, there is a duality between the rays spanned by quasi-constant cocharacters and quasi-constant characters. For general reductive $G$, this duality still allows to associate a quasi-constant character to a quasi-constant cocharacter (and vice-versa), albeit in a non-canonical way. 

A ray in a $\QQ$-vector space will mean the $\qgeqz$ multiples of a nonzero vector, \ie a one-dimensional cone.   
\begin{definition}
\label{def-quasi-constant lines} A ray $\rfr$ in $X^*(T)_{\QQ}$  (resp. $X_*(T)_{\QQ}$) is called quasi-constant if some (equivalently every) element of $X^*(T) \cap \rfr$ (resp. $X_*(T) \cap \rfr$) is quasi-constant.\footnote{Throughout, a subscript `$\QQ$' indicates base change from $\ZZ$ to $\QQ$.}
\end{definition}
\begin{proposition}[see Construction~\ref{const-duality} and \Prop~\ref{prop-duality}] \label{prop-intro-duality} Suppose $G$ is semisimple. Given a choice of simple roots $\Delta \subset \Phi$, the linear map $X_*(T) \to X^*(T)$ which associates to a fundamental coweight the corresponding fundamental weight \textnormal{(\S\ref{sec-fund-weights})} restricts to a bijection $\rfr \leftrightarrow \rfr^{\vee}$ between $\Delta$-dominant, quasi-constant rays in $X_*(T)$ and those in $X^*(T)$. 
This bijection satisfies the following properties:
\begin{enumerate}
\item 
\label{item-intro-duality-restriction-levi}
The quasi-constant ray $\rfr^{\vee}$ is the restriction of a ray in $X^*(\cent(\rfr))_{\QQ}$ (see \textnormal{\Rmk~\ref{rmk-cent-line}}).
\item 
\label{item-intro-duality-maximal}
The Levi $\cent(\rfr)$ of $G$ is the maximal Levi satisfying~\ref{item-intro-duality-restriction-levi}.

\end{enumerate}

 \end{proposition}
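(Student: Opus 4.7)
The plan is to deduce both the bijection and the Levi properties from the classification in Theorems~\ref{th-quasi-constant-cominuscule}--\ref{th-gen-quasi-constant-cominuscule}, combined with the elementary observation that the assignment $\omega_i^\vee \mapsto \omega_i$ interchanges the minuscule and cominuscule conditions.

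First I would reduce to a $\bar k$-simple group. By Theorem~\ref{th-gen-quasi-constant-cominuscule}\eqref{item-red-k-simple}, a $\Delta$-dominant ray is quasi-constant iff each of its projections (resp.\ pullbacks) to the $k$-simple factors of $G^{\ad}$ (resp.\ $\tilde G$) is, and the $\QQ$-linear assignment $\omega_i^\vee \mapsto \omega_i$ is compatible with this decomposition. After base change to $\bar k$, Theorem~\ref{th-gen-quasi-constant-cominuscule}\eqref{item-k-simple} further reduces to a single $\bar k$-simple factor, where a $\Delta$-dominant quasi-constant ray is a $\qgeqz$-multiple of a fundamental (co)weight $\omega_i^\vee$ (resp.\ $\omega_i$) attached to a node that is either minuscule or cominuscule. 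From the definitions (\cf~Remark~\ref{rmk-cominuscule} and the tables in \cite[\Chap~VI, Planches I-IX]{bourbaki-lie-4-6}) one has the key dualities
\[
\omega_i\text{ minuscule} \;\Longleftrightarrow\; \omega_i^\vee\text{ cominuscule}, \qquad \omega_i\text{ cominuscule} \;\Longleftrightarrow\; \omega_i^\vee\text{ minuscule},
\]
both equivalences amounting to the assertion that the coefficient of $\alpha_i^\vee$ in the highest coroot $\theta^\vee$ (respectively of $\alpha_i$ in the highest root $\theta$) equals $1$. Hence $\omega_i^\vee \mapsto \omega_i$ carries quasi-constant $\Delta$-dominant rays bijectively onto quasi-constant $\Delta$-dominant rays, yielding the bijection $\rfr \leftrightarrow \rfr^\vee$.

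For \eqref{item-intro-duality-restriction-levi}--\eqref{item-intro-duality-maximal}, pick a generator $\mu \in \rfr$. Its centralizer $L := \cent(\rfr) = \cent(\mu)$ is the standard Levi whose root system is $\{\alpha \in \Phi : \langle \alpha, \mu\rangle = 0\}$, i.e.\ on each relevant simple factor the Levi obtained by deleting the node $i$. For any connected reductive subgroup $L' \subseteq G$ containing $T$, the ray $\rfr^\vee$ lies in $X^*(L')_\QQ$ iff $\omega_i$ is trivial on $(L')^{\der} \cap T$, equivalently iff $\langle \omega_i, \alpha^\vee\rangle = 0$ for every root $\alpha$ of $L'$. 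Since this pairing equals the coefficient of $\alpha_i$ in the simple-root expansion of $\alpha$, it vanishes on all roots of $L'$ iff no such root involves $\alpha_i$, which in turn is equivalent to $L' \subseteq L$. Specializing to $L' = L$ gives \eqref{item-intro-duality-restriction-levi}; the converse containment is the maximality \eqref{item-intro-duality-maximal}.

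The main obstacle is largely bookkeeping in the semisimple-but-neither-simply-connected-nor-adjoint case: one must verify that the bijection descends correctly through the sandwich of integral lattices between the root/coroot and weight/coweight lattices, and that it is compatible with the Galois action implicit in the quasi-constant condition. Both points are formal once one observes that rays, being $\qgeqz$-scaling classes, depend only on their underlying $\QQ$-line and not on any integral representative, so that passing between $\tilde T$, $T$ and the adjoint torus does not alter the ray structure.
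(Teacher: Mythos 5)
Your proposal is correct and mirrors the paper's argument (Construction~\ref{const-duality} and Proposition~\ref{prop-duality}): reduce via Theorem~\ref{th-gen-quasi-constant-cominuscule}, observe that $\eta(\alpha^\vee)\mapsto\eta(\alpha)$ swaps minuscule and cominuscule so that the classification carries over, and identify $\cent(\rfr)$ by checking which simple roots pair nontrivially with $\rfr^\vee$. One small slip worth noting: $\langle\eta(\alpha_i),\alpha^\vee\rangle$ equals the coefficient of $\alpha_i^\vee$ in the simple-coroot expansion of $\alpha^\vee$, not the coefficient of $\alpha_i$ in $\alpha$ (the two differ by a positive ratio of root lengths), but since they vanish simultaneously your conclusion about $L'\subseteq\cent(\rfr)$ is unaffected.
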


\subsection{Applications I: Shimura varieties of Hodge type}
\label{sec-intro-shimura}
Consider a symplectic embedding 
\begin{equation}
\label{eq-symplectic-embedding}
\psi: \gx \hookrightarrow   \shdagsp \end{equation}
 of a Shimura datum of Hodge type $\gx$  into a Siegel Shimura datum $\shdagsp$. Given a neat, open, compact subgroup $\Kcal \subset \gofaf$, let $\shgx_{\Kcal}$ denote the associated Shimura variety at level $\Kcal$ over $\CC$. There exists $\Kcal_g \subset GSp(2g, \af)$ such that $\psi(\Kcal) \subset \Kcal_g$ and $\psi$ induces a closed embedding of $\shgx_{\Kcal}$ into $\Sh\shdagsp_{\Kcal_g}$ (\cf \cite[1.15]{Deligne-Travaux-Shimura}).
The Hodge line bundle $\omega_g$ of the Siegel Shimura variety $\Sh\shdagsp_{\Kcal_g}$ is defined as \begin{equation}
\omega_g:=\det \fil^1H^1_{\dR},
\end{equation} where $H^1_{\dR}$ is the universal weight one variation of Hodge structure over $\Sh\shdagsp_{\Kcal_g}$ and $\fil^1$ refers to the Hodge filtration. For $g \geq 2$, sections of $\omega_g^k$ are what are most classically called "Siegel modular forms of weight $k$ and level $\Kcal_g$". 
The Hodge line bundle $\omega(\psi)=\omega(\gx, \psi)$ of the pair $(\gx, \psi)$ on the Shimura variety $\shgx_{\Kcal}$ is then defined by pullback:
\begin{equation}
\omega(\psi):=\psi^*(\omega_g).
\end{equation}

Choose $h \in \XX$. Define $h_g \in \XX_g$ by $h_g:=\psi \circ h$. As usual, set $\mu=(h\otimes \CC)(z,1)$ and $\mu_g=(h_g \otimes \CC)(z,1)$. One has $\mu \in X_*(\GG)$ and $\mu_g \in X_*(GSp(2g))$. Let $E$ be the reflex field of $\gx$. The centralizers $\LL:=\cent_{\GG_E}(\mu)$ and $\LL_g:=\cent_{GSp(2g)_{E}}(\mu_g)$ 
are Levi subgroups of $\GG_E$ and $GSp(2g)_E$. The line bundle $\omega_g$ arises from a character $\eta_g$ of $\LL_g$; the line bundle $\omega(\psi)$ arises from the character $\psi^*\eta_g$ of $\LL$. The character $\eta_{\omega}(\psi):=\psi^*\eta_g$ is called the \underline{Hodge character} of the symplectic embedding $\psi$.   
\begin{theorem} 
\label{th-intro-omega-quasi-constant}
For every symplectic embedding~\eqref{eq-symplectic-embedding}, the Hodge character $\eta_{\omega}(\psi)=\psi^*\eta_g$ is quasi-constant.
\end{theorem}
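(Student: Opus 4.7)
The plan is to combine the reduction to absolutely simple groups from Theorem 1.3.3 with an explicit description of the Hodge character in terms of the standard symplectic representation. Applying Theorem 1.3.3(a) replaces $G$ by each $k$-simple factor of $\tilde{G}$, and the further base change built into Theorem 1.3.3(b) reduces the problem to showing that the restriction of $\eta_\omega(\psi)$ to each $\bar k$-simple factor of $\tilde G_{\bar k}$ satisfies the conditions of Theorem 1.3.2, \ie that it is a non-negative integer multiple of a minuscule or cominuscule fundamental weight.

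Next I would unwind the definition of $\eta_g$ on $GSp(2g)$. With $T_g$ the standard torus $\{\diag(t_1,\ldots,t_g,c/t_1,\ldots,c/t_g)\}$ and $\mu_g(z)=\diag(zI_g,I_g)$, the standard $2g$-dimensional representation $V$ splits as $V^{\mu_g=1}\oplus V^{\mu_g=0}$, where $V^{\mu_g=1}$ has $T_g$-weights $e_1,\ldots,e_g$. Therefore $\eta_g=\det V^{\mu_g=1}=e_1+\cdots+e_g$ is the cominuscule fundamental weight $\omega_g$ of type $C_g$; in particular it is itself quasi-constant. After pulling back via $\psi$ and restricting to $T\subset \LL\subset G$, one obtains
\[
\eta_\omega(\psi)\;=\;\sum_{\chi\,:\,\langle\chi,\mu\rangle=1}m(\chi)\,\chi
\]
in $X^*(T)$, where the sum runs over the $T$-weights $\chi$ of $V$ (viewed as a $G$-representation via $\psi$) of $\mu$-weight $1$, counted with multiplicities $m(\chi)$.

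The core of the proof is then to check that this character matches the classification of Theorem 1.3.2. Two structural inputs are crucial: (i) by the Hodge-type axiom on Shimura data, $\mu$ is minuscule, so every $T$-weight of $V$ satisfies $\langle\chi,\mu\rangle\in\{0,1\}$ and every root of $G$ pairs with $\mu$ in $\{-1,0,1\}$; (ii) the symplectic form on $V$ induces the involution $\chi\mapsto c-\chi$ on $T$-weights, matching $V^{\mu=1}$ with $V^{\mu=0}$ and implying that $\psi^*c$ pairs trivially with every coroot of $G$. Combining (i) and (ii) with quasi-constancy of $\eta_g$ already established on the Siegel side lets one compute $\langle\eta_\omega(\psi),\alpha^\vee\rangle$ for each root $\alpha$ directly in terms of the $T$-weights of $V$.

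The main obstacle I anticipate is the last step: converting the weight-sum formula into the pairing condition $\langle\eta_\omega(\psi),\sigma\alpha^\vee\rangle/\langle\eta_\omega(\psi),\alpha^\vee\rangle\in\{-1,0,1\}$ for every root $\alpha$ with nonzero pairing and every $\sigma\in W\rtimes\Gal(\bar k/k)$. A conceptual route is via the duality of Proposition 1.3.4, identifying the ray spanned by $\eta_\omega(\psi)$ as the quasi-constant character dual to the quasi-constant (in fact minuscule) cocharacter $\mu$, and using the maximality statement of Proposition 1.3.4(b) to pin down $\cent(\mu)$ as the relevant Levi. A more computational route is a case-by-case verification that exploits the known classification of absolutely simple groups admitting a Hodge-type symplectic embedding (types $A$, $B$, $C$, $D$, and $E_6$, $E_7$), in each case identifying the specific fundamental weight to which $\eta_\omega(\psi)$ is proportional and checking against Theorem 1.3.2 directly.
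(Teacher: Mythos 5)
Your overall framework — restricting $\rho=\std\circ\psi$ to the Levi $\LL$, expressing $\eta_\omega(\psi)$ as a sum of $\mu$-weight-one weights of $V$, exploiting that $\mu$ is minuscule and that the symplectic form pairs $V^{\mu=1}$ with $V^{\mu=0}$, and invoking the classification — does overlap with the beginning of the paper's argument. But the proposal has a genuine gap at precisely the point the paper works hardest, and your two proposed routes around it do not close it.

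The gap is the following. Your reduction via \Th~\ref{th-gen-quasi-constant-cominuscule}\ref{item-k-simple} is misstated: it is \emph{not} enough to show that the restriction of $\eta_\omega(\psi)$ to each $\bar k$-simple factor is a nonnegative integer multiple of a minuscule or cominuscule fundamental weight. Condition~(iii) of that theorem (after factoring out the common scalar $m$) requires the nontrivial restrictions to be \emph{the same} multiple of the corresponding fundamental weight on every factor. Because $\galq$ permutes the simple factors of $\tilde\GG_{\qbar}$ but $\rho$ need not be $\galq$-equivariantly "symmetric" across them in any obvious way, the equality of these multiplicities on $\galq$-orbits is the actual content. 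The paper isolates this as Lemma~\ref{lem multiplicities indep}, whose proof requires the preliminary positivity characterization of the weights of $\tilde V^{-1,0}$ (Lemmas~\ref{lem a a+1}--\ref{lem-hodge-filt}), built on Deligne's analysis of the fractional lifting $\tilde\mu$. Nothing in your proposal establishes this.

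Your "conceptual route" via Proposition~\ref{prop-intro-duality} is circular: Construction~\ref{const-duality} produces from $\mu$ \emph{some} quasi-constant character $\mu^*$ whose ray is distinguished, but the claim that $\eta_\omega(\psi)$ lies on that ray is equivalent to (and in fact a consequence of, see \Cor~\ref{cor-intro-hodge-line-invariant}) the theorem you are trying to prove; you cannot use it as an input. Your "computational route" is unexecuted and would face the same obstacle: knowing the list of types and which vertices are special does not by itself control the multiplicities $m_i$ coming from the representation $\rho$, and it is exactly this representation-theoretic input that requires the structure of the Hodge filtration.
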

\Th~\ref{th-intro-omega-quasi-constant} was applied in \cite{Goldring-Koskivirta-Strata-Hasse} to show that Ekedahl-Oort strata of Hodge-type Shimura varieties admit Hasse invariants at all primes $p \neq 2$ of good reduction (see \S4.3 of \loccitn). For further applications of quasi-constant characters to Hasse invariants, see \S\ref{sec-intro-hasse}. The proof of \Th~\ref{th-intro-omega-quasi-constant} given in \S\ref{sec-hodge-q-constant} was previously given in Appendix A of an earlier draft of \loccit

Let $\tilde{\LL}$ be the preimage of $\LL$ in $\tilde{\GG}_E$ and write $s: \tilde \LL \to \LL$ for the natural map (in particular $\tilde \LL$ is not the simply-connected cover of the derived group of $\LL$). 
The following invariance property of the Hodge character and Hodge line bundle under functoriality is a simple consequence of \Th~\ref{th-hasse-quasi-constant}:
\begin{corollary} 
\label{cor-intro-hodge-line-invariant} Assume that the adjoint group $\GG^{\ad}$ is $\QQ$-simple.  Then the positive ray generated by the (pullback of) the Hodge character $s^*\psi^*\eta_g$ in $X^*(\tilde{\LL})_{\QQ}$ is independent of the choice of embedding $\psi$. In other words, the positive ray generated by the Hodge line bundle  $\omega(\psi)$ in the Picard group $\Pic(\shgxk)_{\QQ}$ is independent of $\psi$.\footnote{Note that the Picard group here is the usual one of line bundles without additional structure; greater care must be taken if one wants a statement concerning line bundles which are equivariant with respect to a group action, \eg the $\gofaf$-action related to the action of Hecke algebras on spaces of automorphic forms.}
\end{corollary}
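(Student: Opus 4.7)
The plan is to combine the quasi-constancy of the Hodge character (Theorem~\ref{th-intro-omega-quasi-constant}) with the classification of quasi-constant characters (Theorem~\ref{th-gen-quasi-constant-cominuscule}). Since $\GG^{\ad}$ is assumed $\QQ$-simple, its simply-connected cover $\tilde{\GG}$ is also $\QQ$-simple, so one is in the setting of part (b) of the classification.

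The first step is to pull back: by Theorem~\ref{th-intro-omega-quasi-constant}, $\eta_{\omega}(\psi) = \psi^*\eta_g$ is quasi-constant on $\LL$; since quasi-constancy is a property of pairings of characters with coroots, and the map $s$ is a central isogeny of derived groups compatible with centers, the pullback $s^*\psi^*\eta_g \in X^*(\tilde{\LL})_{\QQ}$ is quasi-constant for the root datum of $\tilde{\GG}_E$. Applying Theorem~\ref{th-gen-quasi-constant-cominuscule} then writes $s^*\psi^*\eta_g$, relative to the $\bar{\QQ}$-simple factors of $\tilde{\GG}_{\bar{\QQ}}$, in the form $m(\xi_1, \ldots, \xi_d)$ with $m \in \zgeqo$, each $\xi_i$ trivial, minuscule or cominuscule, and the nontrivial $\xi_i$ all of one type.

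The key step is then to argue that this discrete data is determined by $(\GG, \XX)$, independent of $\psi$. The locus $\{i : \xi_i \neq 0\}$ coincides with $\{i : \mu_i \neq 0\}$, since $\eta_g$ is (up to a similitude contribution) $\det \fil^1$, which is trivial on exactly the factors where $\mu$ is trivial; this locus depends only on $\mu$. On each factor where $\mu_i \neq 0$, the Hodge cocharacter $\mu_i$ is minuscule, so $\tilde{\LL}_i \subset \tilde{\GG}_i$ is a maximal proper Levi, and the characters trivial on $\tilde{\LL}_i^{\der}$ form a group of rank one; the positive dominant ray is thus canonical, determined by $\mu_i$ alone. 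Since $\GG^{\ad}$ is $\QQ$-simple, the nontrivial factors form a single $\galq$-orbit, so Galois-equivariance propagates the datum from one factor to all. The sign $m > 0$ comes from $\dim \fil^1 > 0$ wherever $\mu$ is nontrivial.

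The main obstacle is the minuscule-vs-cominuscule dichotomy in the classification: one must show this alternative is also pinned down by $(\GG, \XX)$ rather than $\psi$. For simply-laced $\tilde{\GG}$, Remark~\ref{part 1 implies part 2 of thm} rules out the cominuscule option, so nothing remains. In types $B_n$ and $C_n$ one must verify that the fundamental weight selected by $\det \fil^1$ agrees with the canonical one attached to $\tilde{\LL}_i$ by the rank-one argument above; I expect this to follow from the explicit description in Remark~\ref{rmk-cominuscule} combined with the fact that $\mu_i$ is minuscule.
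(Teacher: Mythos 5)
Your proposal follows a genuinely different, but ultimately valid, organization from the paper's: rather than re-opening the proof of Theorem~\ref{th-intro-omega-quasi-constant} (specifically Lemma~\ref{lem multiplicities indep}) to extract the equality of the multiplicities $m_i$ directly, you treat Theorem~\ref{th-intro-omega-quasi-constant} and Theorem~\ref{th-gen-quasi-constant-cominuscule}\ref{item-k-simple} as black boxes and let the abstract classification force the common multiplicity $m$. This is perfectly legitimate (the classification is proved in \S\ref{sec-classification} independently of \S\ref{sec-hodge-q-constant}, so no circularity), and it has the merit of isolating exactly where the hypothesis ``$\GG^{\ad}$ is $\QQ$-simple'' enters, namely so that $\tilde\GG$ is $\QQ$-simple and hence item~\ref{item-k-simple} of the classification applies.

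That said, two points need correction. First, the ``main obstacle'' you flag -- the minuscule-vs-cominuscule dichotomy -- is a non-issue, and no verification of the kind you describe is needed. On every factor with $\tilde{\LL}_i \neq \tilde{G}_i$, the Levi $\tilde{\LL}_i$ is a maximal proper Levi of the simple, simply-connected group $\tilde{G}_i$, so $X^*(\tilde{\LL}_i)_\QQ$ is one-dimensional, spanned by $\eta(\alpha_i)$; since $\eta_{\omega,i}$ is by construction a character of $\tilde{\LL}_i$, it is automatically proportional to $\eta(\alpha_i)$. Whether $\eta(\alpha_i)$ happens to be called minuscule or cominuscule is irrelevant -- it is pinned down by $\mu_i$ alone. (And indeed, because $\mu_i$ is minuscule, $\alpha_i$ is special, so $\eta(\alpha_i)$ is cominuscule in the multi-laced cases $B_n, C_n$; but nothing in your argument needs this.)

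Second, and more seriously, the step you treat as routine is precisely where the real work lies. Your assertion that $\{i : \xi_i \neq 0\} = \{i : \mu_i \neq 0\}$ and that the overall sign is determined ``from $\dim\fil^1 > 0$'' is not established by the ingredients you invoke. The inclusion $\xi_i = 0$ whenever $\mu_i$ is trivial is immediate (then $X^*(\tilde{\LL}_i) = X^*(\tilde{G}_i) = 0$). But the converse -- that $\eta_{\omega,i} \neq 0$, and moreover has the \emph{same} sign against $\alpha_i^\vee$ for every $i$ with $\mu_i$ nontrivial -- requires a positivity argument. (Note in particular that for simple simply-connected $\tilde{G}_i$ the determinant of the full representation is trivial, so $\det \tilde{V}^{-1,0} = (\det \tilde{V}^{0,-1})^{-1}$ on $\tilde{T}_i$, and nonvanishing does not follow just from counting dimensions.) The paper supplies this via the dictionary between dominant regular weights and ample line bundles on the compact dual $\Pcal = \GG_{\qbar}/\PP_{\qbar}$: the Hodge bundle $\omega_g$ is anti-ample on $\Pcal_g$, the embedding of compact duals induced by $\psi$ is finite, so $\omega(\psi) = \psi^*\omega_g$ is anti-ample on $\Pcal$, giving $\langle \eta_\omega, \alpha^\vee\rangle < 0$ for every $\alpha \in \Delta\setminus I$, hence $m < 0$. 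You should incorporate an argument of this kind (or an equivalent one) to close the gap; without it the proposal does not yet yield the corollary.
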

\begin{rmk} \label{rmk-hodge-line} It is easy to give examples of two embeddings $\psi_1, \psi_2$ such that $\omega(\psi_2)$ is a nontrivial positive multiple of $\omega(\psi_1)$ (\cf
 \cite[\S2.1.6, Footnote 7]{Goldring-Koskivirta-Strata-Hasse}). Furthermore, we explain in \S\ref{sec-ex-G-not-simple} below why the assumption that $\GG^{\ad}$ is $\QQ$-simple is essential. Thus 
\Cor~\ref{cor-intro-hodge-line-invariant} exhibits the best possible invariance property of the Hodge line bundle under functoriality.
\end{rmk}
\subsection{Applications II: Group-theoretical Hasse invariants}
\label{sec-intro-hasse}
In this \S, suppose $p$ is a prime and $k=\fp$. So $G$ is an $\fp$-group. Let $\mu \in X_*(G)$. Pink-Wedhorn-Ziegler associate to the pair $(G, \mu)$ a zip datum and a stack $\GZip^{\mu}$ of $G$-Zips of type $\mu$ \cite{PinkWedhornZiegler-F-Zips-additional-structure,Pink-Wedhorn-Ziegler-zip-data}. The stack $\GZip^\mu$ admits a stratification parameterized by a certain subset ${}^I W$ of the Weyl group $W$. The zip stratification of $\GZip^\mu$ is a group-theoretic generalization of the Ekedahl-Oort stratification. \Cf \cite{Goldring-Koskivirta-Strata-Hasse,Goldring-Koskivirta-zip-flags} for the basic facts about $\GZip^\mu$, including the connection with the special fibers of Hodge-type Shimura varieties.

Let $w \in {}^I W$, $S_w$ the corresponding zip stratum and $\overline{S}_w$ its Zariski closure. Let $L:=\cent(\mu)$ and $\lambda \in X^*(L)$. There is an associated line bundle $\Vscr(\lambda)$ on $\GZip^\mu$. Recall from the introduction of \cite{Goldring-Koskivirta-zip-flags} that a \uline{group-theoretical Hasse invariant} or \uline{characteristic section} for  $(\lambda,  S_w)$ is a section $t \in H^0(\overline{S}_w, \Vscr(n\lambda))$ for some $n\geq 1$,  whose non-vanishing locus is precisely $S_w$. Recall further that the stratification of $\gzipmu$ is termed \uline{principally pure} if every stratum admits a characteristic section for some $\lambda \in X^*(L)$ and \uline{uniformly principally pure} if a single $\lambda$ admits characteristic sections on all strata. In the latter case, such a $\lambda$ is called a \uline{Hasse generator} for $\GZip^\mu$. 

One of the basic questions studied in \cite{Goldring-Koskivirta-Strata-Hasse} and \cite{Goldring-Koskivirta-zip-flags} was:
\begin{question} \label{q-principally-pure} For what pairs $(G, \mu)$ is the zip stratification of $\gzipmu$  (uniformly) principally pure?
\end{question}

In \cite[\Th~3.2.3]{Goldring-Koskivirta-Strata-Hasse}, it was shown that $\GZip^{\mu}$ is uniformly principally pure as long as $p$ satisfies a mild bound in terms of $(G, \mu)$ (see \S\ref{sec-purity} for the precise result). An explicit bound is recorded in Appendix~\ref{app-bounds}. As an application of the quasi-constancy of the Hodge line bundle (\Th~\ref{th-intro-omega-quasi-constant}), it was shown that, when $(G, \mu)$ arises from a Shimura datum of Hodge-type, the zip stratification is uniformly principally pure (without any assumption on $p$). These results were reproved in \cite{Goldring-Koskivirta-zip-flags} by a somewhat different method, using zip data of higher exponent.
Finally, a counter-example to principal purity when $p=2$ was given in \cite[\S5.3]{Goldring-Koskivirta-zip-flags}.

In this paper, the classification and duality of quasi-constant characters are used to improve upon the results of \cite{Goldring-Koskivirta-Strata-Hasse} and \cite{Goldring-Koskivirta-zip-flags}. 
\begin{theorem} \label{th-hasse-quasi-constant} Suppose $G$ is an $\fp$-group and $\mu \in X_*(G)$ is a quasi-constant cocharacter. Then
\begin{enumerate}
\item \label{item-levi-admits-quasi-constant} \textnormal{Construction~\ref{const-duality}} equips the Levi $L:=\cent(\mu)$ of $G$ with a quasi-constant character $\mu^*$.
\item \label{item-quasi-constant-implies-hasse-inv-generator} The quasi-constant character $-\mu^*$ afforded by part~\ref{item-levi-admits-quasi-constant} is a Hasse generator for $\gzipmu$. Consequently, the stratification of $\gzipmu$ is uniformly principally pure.
\end{enumerate}
\end{theorem}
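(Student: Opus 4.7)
The plan is to handle the two assertions in sequence, with part~(a) a direct consequence of the duality machinery and part~(b) reducing an already-established sufficient criterion from \cite{Goldring-Koskivirta-Strata-Hasse} to a tautology by virtue of the quasi-constancy condition.

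For part~(a), since $\mu$ is quasi-constant, the ray $\rfr \subset X_*(T)_{\QQ}$ it spans is quasi-constant in the sense of Definition~\ref{def-quasi-constant lines}. I would apply Proposition~\ref{prop-intro-duality} (after, if necessary, reducing to the semisimple case via the derived subgroup, as permitted by \Th~\ref{th-gen-quasi-constant-cominuscule}\ref{item-red-k-simple}) to obtain a dual quasi-constant ray $\rfr^{\vee} \subset X^*(T)_{\QQ}$, and take $\mu^*$ to be a primitive integral generator. Part~\ref{item-intro-duality-restriction-levi} of Proposition~\ref{prop-intro-duality} states that $\rfr^{\vee}$ arises by restriction from a ray in $X^*(\cent(\rfr))_{\QQ} = X^*(L)_{\QQ}$, so $\mu^*$ extends canonically to a character of $L$. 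Its quasi-constancy as a character of $L$ is immediate from that on $T$, since Definition~\ref{def quasi constant} only involves pairings with coroots of $G$, which are internal to $T$.

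For part~(b), the plan is to invoke the existence-of-Hasse-invariants criterion from \cite[\Th~3.2.3]{Goldring-Koskivirta-Strata-Hasse}, reviewed in \S\ref{sec-purity} with its explicit numerical form recorded in Appendix~\ref{app-bounds}. That criterion associates to each $L$-character $\lambda$ lying in the appropriate positive cone an explicit bound $N(G,\mu,\lambda)$ such that $\lambda$ is a Hasse generator for $\gzipmu$ whenever $p \geq N(G,\mu,\lambda)$. The function $N$ is controlled by the maximal oscillation of the ratios $\langle \lambda, \sigma\alpha^{\vee}\rangle / \langle \lambda, \alpha^{\vee}\rangle$ as $\sigma$ ranges over $W \rtimes \galk$ and $\alpha$ over those coroots on which $\lambda$ is nonzero. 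Taking $\lambda = -\mu^*$, Definition~\ref{def quasi constant} forces every such ratio into $\{-1,0,1\}$; substituting into the explicit expression from Appendix~\ref{app-bounds} collapses the bound to a vacuous inequality, so the criterion applies for all primes $p$.

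The main obstacle I foresee is not the numerical estimate itself but the orientation/sign check: one must verify that the ray of $\mu^*$ produced by Construction~\ref{const-duality} is oriented so that $-\mu^*$ (rather than $+\mu^*$) pairs in the correct direction with the roots of the unipotent radical opposite to $L$, which is what is required for the resulting sections from \cite[\Th~3.2.3]{Goldring-Koskivirta-Strata-Hasse} to have non-vanishing locus equal to $S_w$ on each closed stratum $\overline{S}_w$. This reduces to checking compatibility of the fundamental-weight/fundamental-coweight bijection underlying Construction~\ref{const-duality} with the choice of Borel implicit in the zip stratification. The remaining bookkeeping — simultaneous non-vanishing on every stratum and integrality of $\mu^*$ after passage to $L$ — is then straightforward, and the uniform principal purity statement is immediate once the Hasse-generator property is established.
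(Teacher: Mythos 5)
Your proposal follows essentially the same route as the paper: part~(a) via Construction~\ref{const-duality}/Proposition~\ref{prop-duality} producing a dual quasi-constant ray whose nontrivial integral elements are quasi-constant and restrict from $X^*(L)$, and part~(b) by invoking \cite[\Th~3.2.3]{Goldring-Koskivirta-Strata-Hasse}, where quasi-constancy forces the orbitally $p$-close inequality for every prime $p$ and the $\Delta$-dominance of $\mu^*$ built into~\eqref{eq-dual-quasi-constant-line} makes $-\mu^*$ $L$-ample. The one small inaccuracy is your routing through the explicit bounds of Appendix~\ref{app-bounds}: those bounds are computed for the specific character $\eta(\Delta\setminus\Delta_L)$ and are not needed here, since the ratio in Definition~\ref{def quasi constant} lying in $\{-1,0,1\}$ directly implies the orbitally $p$-close bound $\leq p-1$ for all $p\geq 2$; this does not affect correctness, and your ``sign check'' obstacle is exactly what the paper disposes of by noting $\Delta$-dominance of $\mu^*$.
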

 We stress that \Th~\ref{th-hasse-quasi-constant} contains no assumption on $p$ and makes no reference to Shimura varieties.
In particular, it provides a result for all $p$ in some cases when $\mu$ is not minuscule. An interesting feature of \Th~\ref{th-hasse-quasi-constant} is that it uses both quasi-constant characters and cocharacters simultaneously.

\subsection{Outline}
\label{sec-outline} \S\ref{sec-notation} sets up the basic notation and structure theory concerning reductive groups that is used in the rest of the paper. \S\ref{sec-classification} concerns the classification and duality of quasi-constant (co)characters. The classification (\Ths~\ref{th-quasi-constant-cominuscule} and~\ref{th-gen-quasi-constant-cominuscule}) is proved in \S\S\ref{sec-proof-simple}-\ref{sec-proof-generalization}; the duality construction~\ref{const-duality} is given in \S\ref{sec-duality}. The quasi-constancy of the Hodge line bundle (\Th~\ref{th-intro-omega-quasi-constant}) is established in \S\ref{sec-hodge-q-constant}. 

\S\ref{sec-further-motivation} discusses further applications, motivation and open questions concerning quasi-constant (co)characters. \S\ref{sec-purity} gives the application to uniform principal purity (\Th~\ref{th-hasse-quasi-constant}). Motivation for the quasi-constant condition as a unification of `minuscule' and `cominuscule' is provided in \S\ref{sec-motivation-unification-co-min}. Finally \S\ref{sec-test-case} includes a more speculative discussion of the potential role of the quasi-constant condition in our program: We mention open questions concerning Griffiths-Schmid manifolds and stacks of $G$-Zips and how quasi-constant cocharacters offer an interesting test case for these questions.    

Appendix~\ref{app-bounds} records explicit bounds for the uniform principal purity of $\GZip^\mu$ depending only on the type of $G$ and that of $L$.

\section*{Acknowledgements}
This paper drew inspiration from three sources: First, the pioneering work of Deligne on Shimura varieties, which discovered the Hodge-theoretic significance of minuscule cocharacters. Second, the work of Griffiths-Schmid -- revisited by Carayol and Green-Griffiths-Kerr -- on moduli spaces of Hodge structures, which attempted to pierce the barrier of minuscule cocharacters. Third, the work of Moonen-Wedhorn and Pink-Wedhorn-Ziegler on $G$-Zips, which introduced a new class of geometric objects associated to cocharacters of reductive groups.  We thank the authors of these works for their encouragement and for enlightening discussions related to this paper.

We are grateful to Torsten Wedhorn for helpful comments on an earlier draft of this paper.
In addition, it is a pleasure to thank Yohan Brunebarbe, Marc-Hubert Nicole, Stefan Patrikis, Colleen Robles, Sug Woo Shin, Benoit Stroh and David Vogan for helpful discussions. Finally, we thank the referee for his/her careful reading of the paper.

\section{Notation and structure theory} Let $(G,T)$ as in \S\ref{sec-intro-quasi-constant}, with root datum~\eqref{eq-root-datum-G}.
\label{sec-notation}
\subsection{Structure theory} \label{sec-set-up-classification}
\subsubsection{Simply-connected covering and adjoint projection}
\label{sec-simply-conn-adjoint}
Write $G^{\der}$ (resp. $G^{\ad}$) for the derived subgroup (resp. adjoint quotient) of $G$ and $\tilde G$ for the simply-connected cover of $G^{\der}$. Write $\pr: G \twoheadrightarrow G^{\ad}$ for the natural projection and $s:\tilde G \to G$  for the "quasi-section" of $\pr$, composition of the projection $\tilde G \twoheadrightarrow G^{\der}$ with the inclusion $G^{\der} \hookrightarrow G$.

The root datum~\eqref{eq-root-datum-G} canonically induces ones of $G^{\der}, \tilde G$ and $G^{\ad}$ as follows (see \cite[Part II, 1.18]{jantzen-representations}): Let $$X_0^*(T):=\{\chi \in X^*(T)| \langle \chi ,\alpha^{\vee} \rangle =0\mbox{ for all } \alpha \in \Phi\} \mbox{ and }
T^{\der}:=\bigcap_{\chi \in X_0^*(T)} \ker \chi.$$
Then $T^{\der}$ is a maximal torus in $G^{\der}$ and 
$(X^*(T^{\der}), \Phi; X_*(T^{\der}), \Phi^{\vee}) $ is the root datum of $(G^{\der}, T^{\der})$, where the roots $\Phi$ are restricted to $T^{\der}$.  
Let $\tilde T$ (resp. $T^{\ad}$) denote the preimage of $T^{\der}$ in $\tilde G$ (resp. the image of $T^{\der}$ in $G^{\ad}$). Then $\tilde T$ and $T^{\ad}$ are maximal tori in $\tilde G$ and $G^{\ad}$ respectively; the roots (resp. coroots) of the three pairs $(\tilde G, \tilde T), (G^{\der}, T^{\der}),(G^{\ad}, T^{\ad})$ are identified via the central isogenies $(\tilde G, \tilde T) \to (G^{\der}, T^{\der}) \to(G^{\ad}, T^{\ad})$.

\subsubsection{Decompositions over an algebraically closed field}
\label{sec-decomp-alg-closed}
Let $K$ be an algebraically closed field extension of $k$.
Over $K$, one has the decompositions \begin{equation} \tilde G_{K} \cong \prod_{i=1}^d \tilde G_i \mbox{ \hspace{.2in} and \hspace{.2in} } G^{\ad}_{K} \cong \prod_{i=1}^d G_i^{\ad} , \label{eq factors of G}\end{equation} 
where each $\tilde G_i$ is a simple, simply-connected $K$-group and $G_i^{\ad}$ is its adjoint group.  
Set $s_i:\tilde G_i \to G_{K}$ (resp. $\pr^i: G_K \to G_i^{\ad}$) for the composition of $s$ (resp. $\pr$) with the embedding along (resp. projection onto) the $i$th component by means of~\eqref{eq factors of G}.

In view of~\eqref{eq factors of G}, one has
\begin{equation} \tilde T_{K} \cong \prod_{i=1}^d \tilde T_i \mbox{ \hspace{.2in} and \hspace{.2in} } T^{\ad}_{K} \cong \prod_{i=1}^d T_i^{\ad} , \label{eq factors of T}\end{equation} where $ \tilde T_i \subset \tilde G_i$, $T_i^{\ad} \subset G_i^{\ad}$ are maximal tori and $\tilde T_i$ is the inverse image of $T_i^{\ad}$ under the projection $\tilde G_i \twoheadrightarrow G_i^{\ad}$.
\subsubsection{Dynkin diagram} 
\label{sec-dynkin}
Fix a basis of simple roots $\Delta \subset \Phi$. Write $\Delta=\bigsqcup_{i=1}^d \Delta_i$ and $\Phi=\bigsqcup_{i=1}^d \Phi_i$ for the decompositions of $\Delta$ and $\Phi$ corresponding to~\eqref{eq factors of G}. Denote by $\Dfr$ (resp. $\Dfr_i$) the Dynkin diagram of $\Delta$ (resp. $\Delta_i$). Given $\alpha \in \Delta$, write $v_{\alpha}$ for the corresponding vertex of $\Dfr$. Recall that $\Dfr$ is called \underline{simply-laced} if no two vertices of $\Dfr$ are joined by more than one edge (equivalently all roots have the same length); otherwise we say $\Dfr$ is \underline{multi-laced}.
\subsubsection{(Co)Root multiplicities}
\label{sec-root-mult}
When $G_{K}$ is simple,  write $^h \! \alpha$ (resp. $^h\!\alpha^{\vee}$) for the highest root (resp. highest coroot). Let $({}^h\!\alpha)^{\vee}$ be the coroot corresponding to the highest root under the bijection $\Phi \to \Phi^{\vee}$, $\alpha \mapsto \alpha^{\vee}$. Beware that ${}^h \! \alpha^{\vee} \neq ({}^h\!\alpha)^{\vee}$ precisely when $\Phi$ is multi-laced.   One has decompositions into simple (co)roots $$
^h\!\alpha=\sum_{\alpha \in \Delta} m(\alpha) \alpha \hspace{1cm} \mbox{ and } \hspace{1cm }^h\!\alpha^{\vee}=\sum_{\alpha \in \Delta} m^{\vee}(\alpha) \alpha^{\vee}$$ with $m(\alpha), m^{\vee}(\alpha) \in \zgeqo$ for all $\alpha \in \Delta$.  Recall that a vertex $v_{\alpha}$ of $\Dfr$ is called \uline{special} if $\alpha$ satisfies $m(\alpha)=1$. Say that $v_{\alpha}$ is \uline{co-special} if $m^{\vee}(\alpha)=1$.
\subsubsection{Fundamental (Co)weights}
\label{sec-fund-weights} Suppose $G$ is semisimple. Then the set of simple roots $\Delta$ (resp. simple coroots $\Delta^{\vee}$) is a basis of $X^*(T)_{\QQ}$ (resp. $X_*(T)_{\QQ}$). For $\alpha \in \Delta$, write $\eta(\alpha) \in X^*(T)_{\QQ}$ (resp. $\eta(\alpha^{\vee})\in X_*(T)_{\QQ}$) for the corresponding fundamental weight (resp. fundamental coweight) defined by \[\langle \eta(\alpha), \beta^{\vee} \rangle=\langle \beta, \eta(\alpha^{\vee}) \rangle =\left\{ \begin{array}{ccc} 1 & \mbox{ if } & \beta = \alpha \\
0 & \mbox{ if } & \beta \in \Delta, \beta \neq \alpha
 \end{array} \right. .\]
\subsection{Minuscule and cominuscule (co)characters}

\subsubsection{Minuscule (co)characters} 
\label{sec-minuscule}
Let $\chi \in \chargpt$ and $\mu \in \cochargpt$. Recall that $\chi$ (resp. $\mu$) is \underline{minuscule} if, for every root $\alpha$, one has $\langle \chi , \alpha^{\vee} \rangle \in \{0,1,-1\}$ (resp. $\langle \alpha, \mu \rangle \in \{0,1,-1\}$). Note the resemblance with the definition of quasi-constant (co)characters (\Def~\ref{def quasi constant}). 

\subsubsection{Cominuscule (co)characters}
\label{sec-cominuscule}

Suppose $G$ is semisimple. Then the literature also contains a far less standard (and arguably less natural, see
\S\ref{sec-motivation-unification-co-min}) notion of cominuscule (co)character.
Following \cite[\Def~9.0.14]{Billey-Lakshmibai-book},  $\chi \in X^*(T)$ is termed \underline{cominuscule} if there exists a basis $\Delta \subset \Phi$ of simple roots such that \begin{enumerate}
\item $\chi=\eta(\alpha)$ for some $\alpha \in \Delta$, and
\item the fundamental coweight $\eta(\alpha^{\vee})$ is minuscule.\footnote{Our definition is an equivalent variant of the one given in \loccitn}
\end{enumerate}
A cominuscule cocharacter is defined by replacing `roots' with `coroots' and `fundamental weights' with `fundamental coweights'.

\subsubsection{Relation to fundamental (co)weights}
\label{sec-fund-minusc-cominusc}
The notions of
\S\S\ref{sec-dynkin}--\ref{sec-fund-weights} and those just recalled in \S\S\ref{sec-minuscule}--\ref{sec-cominuscule} are linked as follows: A fundamental weight $\eta(\alpha)$ is minuscule (resp. cominuscule) if and only if the vertex $v_{\alpha}$ of $\Dfr$ is cospecial (resp. special). Dually, a fundamental coweight $\eta(\alpha^{\vee})$ is minuscule (resp. cominuscule) if and only if $v_{\alpha}$ is special (resp. cospecial).
 
\section{Classification and duality}
\label{sec-classification}
\S\ref{sec-proof-simple}~is devoted to the proof of \Th~\ref{th-quasi-constant-cominuscule}. We treat the case of characters; the case of cocharacters is completely analogous and left as an exercise. Following some preliminaries, the proof is divided into two cases, according to whether the Dynkin diagram $\Dfr$ is simply-laced or not. The simply-laced case is much simpler. In the multi-laced case, the crux of the argument is to show that a $\Delta$-dominant, quasi-constant character is a multiple of a fundamental weight, see Lemma~\ref{lem fundamental}.

\S\ref{sec-proof-generalization} deduces \Th~\ref{th-gen-quasi-constant-cominuscule} from the special case given by \Th~\ref{th-quasi-constant-cominuscule}. The duality between quasi-constant characters and cocharacters is described in \S\ref{sec-duality}.

\subsection{The absolutely simple and simply-connected case}
\label{sec-proof-simple}
Throughout \S\ref{sec-proof-simple}, suppose $k$ is algebraically closed and that the $k$-group $G$ is simple and simply-connected. 
Consequently, the fundamental weight $\eta(\alpha) \in X^*(T)$ for all $\alpha \in \Delta$.

Assume $\chi \in \chargpt$ is quasi-constant and nontrivial.  Without loss of generality, we may assume that $\chi$ is $\Delta$-dominant. Write $\chi$ as a linear combination of fundamental weights  \begin{equation} \label{eq chi sum fund weights} \chi=\sum_{\alpha \in \Delta} m_{\alpha}(\chi)\eta(\alpha) \end{equation} with $m_{\alpha}(\chi) \in \zgeqz$ for all $\alpha \in \Delta$. Using \S\ref{sec-root-mult}, put \begin{equation}
\label{eq def M of chi} M(\chi)=\sum_{\alpha \in \Delta} m^{\vee}(\alpha) m_{\alpha}(\chi). \end{equation} For all $\alpha \in \Delta$, one has $\langle \chi, \alpha^{\vee} \rangle=m_{\alpha}(\chi)$  and $\langle \chi, ^h\! \alpha^{\vee} \rangle=M(\chi)$. Since $\chi \neq 0$, there exists $\beta \in \Delta$ such that $m_{\beta}(\chi)>0$. Fix such a $\beta$ for the rest of \S\ref{sec-proof-simple}.

\begin{proof}[Proof of \textnormal{\Th~\ref{th-quasi-constant-cominuscule}}, simply-laced case] Assume the Dynkin diagram $\Dfr$ is simply-laced; equivalently $W$ acts transitively on both $\Phi$ and $\Phi^{\vee}$.
 In particular, all the simple coroots and the highest coroot are in the same W-orbit.

Since $\chi$ is quasi-constant and $m_{\beta}(\chi),M(\chi)>0$, one has $m_{\beta}(\chi)=M(\chi)$. As $m_{\alpha}(\chi) \geq 0$ and $m^{\vee}(\alpha) \geq 1$ for all $\alpha \in \Delta$, we deduce from~\eqref{eq def M of chi} that $m_{\alpha}(\chi)=0$ for all $\alpha\neq \beta$ and $m^{\vee}(\beta)=1$. Therefore $\chi=m_{\beta}(\chi) \eta(\beta)$. Finally, $m^{\vee}(\beta)=1$ means that the vertex $v_{\beta}$ of $\Dfr$ is cospecial
(\S\ref{sec-root-mult}); equivalently $\eta(\beta)$ is minuscule (\S\ref{sec-fund-minusc-cominusc}). \end{proof}
\begin{proof}[Proof of \textnormal{\Th~\ref{th-quasi-constant-cominuscule}}, multi-laced case]
Assume for the rest of the proof that $\Dfr$ is multi-laced (so $G$ is of type $B_n$, $C_n$, $G_2$ or $F_4$, $n\geq 2$). Then $\Phi$ (resp. $\Phi^{\vee}$) is the (disjoint) union of two Weyl group orbits; two roots (resp. coroots) are in the same orbit if and only if they have the same length.

\begin{lemma} Assume $\chi \in X^*(T)$ is quasi-constant and $\Delta$-dominant.  Then $\chi$ is a multiple of a fundamental weight. \label{lem fundamental}\end{lemma}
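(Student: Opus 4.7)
The plan is to exploit that in the multi-laced case $\Phi^{\vee}$ consists of exactly two $W$-orbits $O_1, O_2$ and to analyze $\chi$ separately on each. I would partition $\Delta = \Delta_1 \sqcup \Delta_2$ according to $\Delta_i := \{\alpha \in \Delta : \alpha^{\vee} \in O_i\}$, with the convention that ${}^h\!\alpha^{\vee} \in O_1$; let $\theta_2^{\vee}$ denote the unique dominant (highest) element of $O_2$ and write $\theta_2^{\vee} = \sum_{\alpha \in \Delta} n_2(\alpha) \alpha^{\vee}$, so that $\langle \chi, \theta_2^{\vee}\rangle = \sum_\alpha n_2(\alpha) m_\alpha(\chi)$, paralleling $M(\chi) = \langle \chi, {}^h\!\alpha^{\vee}\rangle = \sum_\alpha m^{\vee}(\alpha) m_\alpha(\chi)$. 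Set $S_i := \{\alpha \in \Delta_i : m_\alpha(\chi) > 0\}$.

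The first input comes directly from quasi-constancy: for $\alpha, \alpha' \in \Delta_i$ with $m_\alpha, m_{\alpha'} > 0$, the coroots $\alpha^{\vee}, \alpha'^{\vee}$ are $W$-conjugate, so \Def~\ref{def quasi constant} forces $m_{\alpha'}/m_\alpha \in \{0, \pm 1\}$; since both values are positive, $m_{\alpha'} = m_\alpha$. Hence there is a constant $c_i > 0$ with $m_\alpha(\chi) = c_i$ for every $\alpha \in S_i$. Applying the same argument to the $W$-conjugate pair $(\alpha^{\vee}, \theta_i^{\vee})$ and using the $\Delta$-dominance of $\chi$, one obtains $\langle \chi, \theta_i^{\vee}\rangle = c_i$ whenever $S_i \neq \emptyset$.

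The second input is the tables: for each multi-laced irreducible root system (types $B_n$, $C_n$, $F_4$, $G_2$), both $\theta_1^{\vee} = {}^h\!\alpha^{\vee}$ and $\theta_2^{\vee}$ have full support on simple coroots, \ie $m^{\vee}(\alpha), n_2(\alpha) \geq 1$ for every $\alpha \in \Delta$ (\cf Bourbaki \cite[\Chap~VI, Planches II, III, VIII, IX]{bourbaki-lie-4-6}). Conceptually, the coroot system $\Phi^{\vee}$ is itself irreducible and multi-laced, and in any such system both the highest (co)root and the highest short (co)root have full support.

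Granted these inputs, the conclusion is immediate. Suppose $S_1 \neq \emptyset$; expanding
\[
c_1 \;=\; \langle \chi, \theta_1^{\vee}\rangle \;=\; c_1 \sum_{\alpha \in S_1} m^{\vee}(\alpha) \;+\; c_2 \sum_{\alpha \in S_2} m^{\vee}(\alpha),
\]
and using $m^{\vee}(\alpha) \geq 1$, if $S_2$ were also nonempty the right-hand side would strictly exceed $c_1$, a contradiction. Hence, since $\chi \neq 0$, exactly one $S_i$ is nonempty, say $S_1$; then $\sum_{\alpha \in S_1} m^{\vee}(\alpha) = 1$ forces $S_1 = \{\beta\}$ for a unique $\beta$ with $m^{\vee}(\beta) = 1$, and $\chi = c_1 \eta(\beta)$ is an integer multiple of the fundamental weight $\eta(\beta)$. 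The main obstacle is really the second input -- the full-support statement for both orbits -- which I view as a brief case-by-case inspection of the four multi-laced types rather than a conceptual difficulty; once granted, the rest is clean bookkeeping.
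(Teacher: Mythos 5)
Your proof is correct and takes a genuinely different route from the paper's. The paper argues by contradiction: assuming $m_\beta(\chi), m_\gamma(\chi) > 0$ for distinct $\beta, \gamma$, it invokes the ``ladder'' property of the highest coroot (that $^h\!\alpha^{\vee}$ is the endpoint of a chain $\alpha_1^\vee, \alpha_1^\vee + \alpha_2^\vee, \ldots$ each partial sum of which is a coroot) to manufacture a coroot $\delta^\vee$ with $\langle \chi, {}^h\!\alpha^\vee\rangle > \langle \chi, \delta^\vee\rangle > \langle \chi, \beta^\vee\rangle > 0$, then finishes by pigeonhole: three distinct positive pairing values cannot all sit inside just two Weyl orbits. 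You instead partition $\Delta$ by coroot orbit, observe that quasi-constancy forces the positive coefficients $m_\alpha(\chi)$ to be constant on each part, and exploit the full-support property of \emph{both} dominant coroots $\theta_1^\vee={}^h\!\alpha^\vee$ and $\theta_2^\vee$. Each approach rests on a small fact about multi-laced systems verified from the tables: the paper needs the ladder/chain lemma and full support of ${}^h\!\alpha^\vee$ only (universally known), while you also need full support of the highest element of the other orbit (true, but less often quoted). In exchange, your argument is more structural, pinpoints the fundamental weight $\eta(\beta)$ and the value $m^\vee(\beta)=1$ directly rather than via contradiction, and makes transparent why the multi-laced hypothesis enters (two orbits, constancy on each). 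One notational point to tidy when you write ``exactly one $S_i$ is nonempty, say $S_1$'': if it is $S_2$ that is nonempty, the concluding identity should read $\sum_{\alpha\in S_2} n_2(\alpha)=1$ with $\theta_2^\vee$ in the role of ${}^h\!\alpha^\vee$; the argument is symmetric, but the displayed equation as written only covers the $S_1$ case.
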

\begin{proof} Suppose the conclusion does not hold. Then, in addition to $m_{\beta}(\chi)>0$, there must exist $\gamma \in \Delta$, distinct from $\beta$, such that $m_{\gamma}(\chi)>0$. Since $\Dfr$ is multi-laced, it admits at most one minuscule fundamental weight (zero for $G_2$ and $F_4$, one for $B_n$ and $C_n$, $n\geq 2$). Therefore at least one of $\eta(\beta)$ and $\eta(\gamma)$ is not minuscule.

Without loss of generality, we may assume $\eta(\beta)$ is not minuscule. Equivalently, $v_{\beta}$ is not cospecial, or what amounts to the same, $\langle \eta(\beta), ^h\! \alpha^{\vee} \rangle >1$.

Let $M=\sum_{\alpha \in \Delta}m^{\vee}(\alpha)$. (In terms of~\eqref{eq def M of chi}, one has $M=M(\rho)$, where $\rho$ is the half-sum of the positive roots.)
One knows that the highest coroot $^h\!\alpha^{\vee}$ can be written as a sum of simple coroots $^h\!\alpha^{\vee}=\sum_{i=1}^M\alpha_i^{\vee}$ such that every partial sum $S_{M'}=\sum_{i=1}^{M'}\alpha_i^{\vee}$, ($1\leq M' \leq M$) is a coroot \cite[II.12, Problem 7]{Knapp-beyond-intro-book}.

We claim that there exists a positive coroot $\delta^{\vee}$ whose decomposition into simple coroots either (i) involves both $\beta^{\vee}$ and $\gamma^{\vee}$ with $m^{\vee}(\beta)=1$ and $m^{\vee}(\gamma)\geq 1$, or (ii) involves $\beta^{\vee}$ with multiplicity $\geq 2$ and does not involve $\gamma^{\vee}$. Indeed, the largest partial sum $S_M=  {}^h\!\alpha^{\vee}$ contains $\beta^{\vee}$ with multiplicity $m^{\vee}(\beta) \geq 2$ and $\gamma^{\vee}$ with $m^{\vee}(\gamma) \geq 1$. Let $M^*$, $2 \leq M^* \leq M$ be the smallest integer such that the partial sum $S_{M^*}$ has the same property. Then $S_{M^*-1}$ is a coroot which satisfies the claim.

Put $\delta^{\vee}=S_{M^*-1}$. By construction, one has the sequence of inequalities 
$$ 
\langle \chi, ^h\!\alpha^{\vee} \rangle > \langle \chi , \delta^{\vee} \rangle> \langle \chi, \beta^{\vee} \rangle >0. 
$$ 
Hence the pairings of $\chi$ with coroots take on at least 3 strictly positive values. Since the coroots form two Weyl group orbits, there exists a $W$-orbit whose pairing with $\chi$ takes on at least two strictly positive values. Thus $\chi$ is not quasi-constant.
\end{proof}
It is left to show that  fundamental weights which are neither minuscule, nor cominuscule are not quasi-constant, by using the tables cited in \Rmk~\ref{rmk-cominuscule}. This is done case-by-case in 
Lemmas~\ref{lem-G2},~\ref{lem-F4} and~\ref{lem-BC} below. Let $e_i$ denote the $i$th coordinate vector in $\ZZ^k$.

\begin{lemma}
\label{lem-G2}
If $G$ has type $G_2$, then $T$ admits no quasi-constant characters. 
\end{lemma}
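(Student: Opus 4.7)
The plan is to reduce to checking just the two fundamental weights $\eta(\alpha_1)$ and $\eta(\alpha_2)$ of $G_2$, and then to exhibit, for each of them, a single Weyl-orbit of coroots on which the pairing takes two distinct strictly positive values. By Lemma~\ref{lem fundamental}, any $\Delta$-dominant quasi-constant character of $T$ in type $G_2$ is a positive integer multiple of some $\eta(\alpha_i)$. Since quasi-constancy is preserved under positive rescaling and under the $W\rtimes\galk$-action, one may assume without loss of generality that $\chi\in\{\eta(\alpha_1),\eta(\alpha_2)\}$, where $\alpha_1,\alpha_2$ are the short and long simple roots of $G_2$.

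First I would list the six positive coroots in terms of the simple coroots and split them into the two Weyl-orbits of $\Phi^\vee$ (the $\galk$-factor is trivial since $k$ is algebraically closed). These orbits are standard for $G_2$: the ``long'' orbit contains $\alpha_1^\vee$ together with two further positive coroots, and the ``short'' orbit contains $\alpha_2^\vee$ together with two further positive coroots. The highest coroot is $2\alpha_1^\vee+3\alpha_2^\vee$ (so $m^\vee(\alpha_1)=2$, $m^\vee(\alpha_2)=3$), and the intermediate positive coroot in its orbit is $\alpha_1^\vee+3\alpha_2^\vee$; in the other orbit one similarly finds $\alpha_1^\vee+\alpha_2^\vee$ and $\alpha_1^\vee+2\alpha_2^\vee$.

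Next I would pair each fundamental weight against the elements of the corresponding orbit using $\langle \eta(\alpha_i),c_1\alpha_1^\vee+c_2\alpha_2^\vee\rangle=c_i$. The orbit of $\alpha_1^\vee$ yields pairings $1,1,2$ with $\eta(\alpha_1)$, and symmetrically the orbit of $\alpha_2^\vee$ yields pairings $1,1,2$ with $\eta(\alpha_2)$. In either case one finds an element $\beta^\vee$ and a Weyl element $\sigma$ with $\langle\chi,\beta^\vee\rangle=1$ and $\langle\chi,\sigma\beta^\vee\rangle=2$, so the ratio $2\notin\{-1,0,1\}$ violates Definition~\ref{def quasi constant}. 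Thus neither fundamental weight of $G_2$ is quasi-constant, completing the proof.

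There is no substantive obstacle: the argument is mechanical once one correctly identifies the two Weyl-orbits of coroots and the decomposition of the highest coroot. Heuristically, this non-quasi-constancy is exactly what one expects from Remark~\ref{rmk-cominuscule}, since $G_2$ has no minuscule or cominuscule fundamental weight and the root system is multi-laced with a coefficient of the highest coroot that exceeds $1$ on both simple coroots.
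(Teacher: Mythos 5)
Your proof is correct and follows the same approach as the paper: reduce (via Lemma~\ref{lem fundamental} and $W$-invariance) to the two fundamental weights, then exhibit a single Weyl orbit of coroots on which each pairs with both $1$ and $2$. The only difference is cosmetic — you compute in the simple-coroot basis $\alpha_1^\vee,\alpha_2^\vee$, with the coroots $\alpha_1^\vee,\ \alpha_1^\vee+3\alpha_2^\vee,\ 2\alpha_1^\vee+3\alpha_2^\vee$ in the long orbit and $\alpha_2^\vee,\ \alpha_1^\vee+\alpha_2^\vee,\ \alpha_1^\vee+2\alpha_2^\vee$ in the short orbit, whereas the paper uses the Bourbaki $\QQ^3$ embedding; both yield the same pairing values $\{1,2\}$ and hence the same conclusion.
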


\begin{proof}  Let $\alpha_1=e_1-e_2$ and $\alpha_2=-2e_1+e_2+e_3$.  Following \cite[\Chap~VI, planche ~IX]{bourbaki-lie-4-6}, choose an identification of the root datum of $(G, T)$ so that $X^*(T)_{\QQ}=X_*(T)_{\QQ}=\{(x_1,x_2,x_3) \in \QQ^3|x_1+x_2+x_3=0\}$, $\Delta=\{\alpha_1, \alpha_2\}$ and  $\langle, \rangle$ is the standard inner product on $\QQ^3$ restricted to $X^*(T)_{\QQ}$. Then the Weyl group orbit of long coroots is \[O_3=\{\pm(e_1-e_2), \pm(e_1-e_3), \pm (e_2-e_3)\}\] and the orbit of short coroots is \[O_1=\{\pm\frac{1}{3}(2e_1-e_2-e_3), \pm\frac{1}{3}(2e_2-e_1-e_3), \pm\frac{1}{3}(2e_3-e_1-e_2)\}.\]
Moreover, $\eta(\alpha_1)=e_3-e_2$, $\eta(\alpha_2)=2e_3-e_1-e_2$. 
 The computation 
$$
\{ \ 
| \langle \eta(\alpha_1), \gamma^{\vee} \rangle | \ | \ \gamma^{\vee} \in O_3
\}=
\{1,2\}=
\{ \ 
| \langle \eta(\alpha_2), \gamma^{\vee} \rangle | \ | \ \gamma^{\vee} \in O_1
\}
$$
shows that neither $\eta(\alpha_1)$, nor $\eta(\alpha_2)$ is quasi-constant.
 \end{proof}

\begin{lemma} If $G$ has type $F_4$, then $T$ admits no quasi-constant characters. 
\label{lem-F4} \end{lemma}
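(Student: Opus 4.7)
The proof will mirror Lemma~\ref{lem-G2}. By Lemma~\ref{lem fundamental}, every $\Delta$-dominant quasi-constant character is a positive integer multiple of some fundamental weight $\eta(\alpha_i)$, so it suffices to rule out each of $\eta(\alpha_1),\ldots,\eta(\alpha_4)$. First I would fix Bourbaki's realization of $F_4$ in $\RR^4$ (\cite[\Chap~VI, Planche~VIII]{bourbaki-lie-4-6}), with $\alpha_1,\alpha_2$ long and $\alpha_3,\alpha_4$ short, and recall that $\Phi^{\vee}$ splits into exactly two $W$-orbits: $O_L^{\vee}$ (long coroots) and $O_S^{\vee}$ (short coroots). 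Since taking coroots swaps lengths, one has $\alpha^{\vee}\in O_S^{\vee}$ iff $\alpha$ is long, and $\alpha^{\vee}\in O_L^{\vee}$ iff $\alpha$ is short.

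The key arithmetic input, read off Bourbaki's tables, concerns the two maximal coroots $^h\!\alpha^{\vee}\in O_L^{\vee}$ (the highest coroot) and $({}^h\!\alpha)^{\vee}\in O_S^{\vee}$ (the coroot of the highest root). Both expand in simple coroots with \emph{every} coefficient at least $2$; this is another manifestation of the well-known fact that $F_4$ admits no (co)minuscule fundamental weight.

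With this in hand, the four cases split into two symmetric pairs. For $i=3,4$, the simple coroot $\alpha_i^{\vee}$ lies in $O_L^{\vee}$, the same orbit as $^h\!\alpha^{\vee}$; the pairings $\langle \eta(\alpha_i), \alpha_i^{\vee}\rangle = 1$ and $\langle \eta(\alpha_i), {}^h\!\alpha^{\vee}\rangle = m^{\vee}(\alpha_i) \geq 2$ furnish two distinct strictly positive values within a single Weyl orbit, contradicting \Def~\ref{def quasi constant}. For $i=1,2$, $\alpha_i^{\vee}$ lies instead in $O_S^{\vee}$, and one substitutes $({}^h\!\alpha)^{\vee}\in O_S^{\vee}$ for ${}^h\!\alpha^{\vee}$: the same recipe produces the pair $\{1, \geq 2\}$ inside $O_S^{\vee}$.

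The only real pitfall is bookkeeping -- ensuring that the second coroot chosen in each case lies in the same $W$-orbit as $\alpha_i^{\vee}$. This is what forces the interchange of ${}^h\!\alpha^{\vee}$ and $({}^h\!\alpha)^{\vee}$ according to the length of $\alpha_i$, exactly the distinction flagged in \S\ref{sec-root-mult}. Once this is observed, no computation beyond reading off the two expansions is required.
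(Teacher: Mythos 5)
Your overall strategy is sound and essentially the same as the paper's: reduce to fundamental weights via Lemma~\ref{lem fundamental}, then for each $\eta(\alpha_i)$ exhibit two distinct strictly positive pairings within the $W$-orbit containing $\alpha_i^{\vee}$, using the dominant element of that orbit as the second coroot. The paper simply reads all the pairings off a table, whereas you try to package the numerics as a consequence of a general principle. Unfortunately the principle you invoke is false.

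The claim that ``both ${}^h\!\alpha^{\vee}$ and $({}^h\!\alpha)^{\vee}$ expand in simple coroots with every coefficient at least $2$'' does not hold. In Bourbaki's realization one has ${}^h\!\alpha^{\vee} = 2e_1 = 2\alpha_1^{\vee} + 4\alpha_2^{\vee} + 3\alpha_3^{\vee} + 2\alpha_4^{\vee}$, which is fine, but
\[
({}^h\!\alpha)^{\vee} = e_1 + e_2 = 2\alpha_1^{\vee} + 3\alpha_2^{\vee} + 2\alpha_3^{\vee} + \alpha_4^{\vee},
\]
with coefficient $1$ at $\alpha_4^{\vee}$. The justification you offer --- that this is ``another manifestation of the well-known fact that $F_4$ admits no (co)minuscule fundamental weight'' --- is a non sequitur: the absence of minuscule (resp.\ cominuscule) fundamental weights says exactly that all $m^{\vee}(\alpha_i)\geq 2$ (resp.\ all $m(\alpha_i)\geq 2$), i.e.\ concerns the coefficients of ${}^h\!\alpha^{\vee}$ and ${}^h\!\alpha$; the expansion of $({}^h\!\alpha)^{\vee}$ in simple coroots is a third, independent collection of integers, because $\alpha\mapsto\alpha^{\vee}$ is not additive.

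Fortuitously, the argument survives: you only ever use the coefficients of $\alpha_1^{\vee},\alpha_2^{\vee}$ in $({}^h\!\alpha)^{\vee}$ (namely $2$ and $3$) and the coefficients of $\alpha_3^{\vee},\alpha_4^{\vee}$ in ${}^h\!\alpha^{\vee}$ (namely $3$ and $2$), all of which are indeed $\geq 2$. But these four numbers must be read off the tables directly; they cannot be deduced from the ``no (co)minuscule fundamental weight'' fact as stated. Replace the false blanket claim with the four specific inequalities (which is no less work than the paper's direct tabulation) and the proof is correct.
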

\begin{proof} As in \cite[\Chap~VI, planche~VIII]{bourbaki-lie-4-6}, set $\alpha_1=e_2-e_3$, $\alpha_2=e_3-e_4$, $\alpha_3=e_4$ and $\alpha_4=(e_1-e_2-e_3-e_4)/2$ in $\QQ^4$.  Choose an identification of the root datum of $(G, T)$ so that $X^*(T)_{\QQ}=X_*(T)_{\QQ}=\QQ^4$, $\Delta=\{\alpha_1, \alpha_2, \alpha_3, \alpha_4\}$ and $\langle, \rangle$ is the standard inner product on $\QQ^4$. The two Weyl group orbits of short and long coroots are respectively   \[O_1=\{\pm e_i \pm e_j | 1 \leq i \neq j \leq 4\} \mbox{ and  } O_2=\{\pm 2e_i | 1 \leq i \leq 4\} \cup \{ \pm e_1 \pm e_2 \pm e_3 \pm e_4\} .\] The fundamental weights are:  $\eta(\alpha_1)=e_1+e_2$, $\eta(\alpha_2)=2e_1+e_2+e_3$, $\eta(\alpha_3)=(3e_1+e_2+e_3+e_4)/2$, $\eta(\alpha_4)=e_1$. The computations 
$$
\{
\ | \langle \eta(\alpha_1), \alpha^{\vee} \rangle | \ | \ \alpha^{\vee} \in O_1
\}
=
\{ \ 
| \langle \eta(\alpha_3), \alpha^{\vee} \rangle | \ | \ \alpha^{\vee} \in O_1
\}
=
\{ \
| \langle \eta(\alpha_4), \alpha^{\vee} \rangle |  \ | \ \alpha^{\vee} \in O_2
\}=
\{0,1,2\}
$$ 
and 
$$
\{ \ 
| \langle \eta(\alpha_2), \alpha^{\vee} \rangle | \ | \ \alpha^{\vee} \in O_1
\}=
\{0,1,2,3\}.
$$
show that none of the fundamental weights $\eta(\alpha_i)$ ($1 \leq i \leq 4$) are quasi-constant.
 \end{proof}
\begin{lemma}
\label{lem-BC}
 Suppose $G$ is of type $B_n$ or $C_n$ ($n \geq 2$). Then the quasi-constant characters of $T$  are precisely the multiples of the two fundamental weights corresponding to the extremities of the Dynkin diagram $\Dfr$. \end{lemma}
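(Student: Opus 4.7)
By Lemma~\ref{lem fundamental}, any $\Delta$-dominant quasi-constant character of $T$ is a multiple of a fundamental weight, so it suffices to determine, for $1 \leq i \leq n$, whether $\eta(\alpha_i)$ is quasi-constant. We use the coordinates of Bourbaki \cite[\Chap~VI, planches II-III]{bourbaki-lie-4-6}. The plan is as follows: describe the two Weyl orbits of coroots and each fundamental weight in these coordinates; verify quasi-constancy directly for the two extremal fundamental weights; for each interior fundamental weight, exhibit two coroots in the same $W$-orbit whose pairings with it are $1$ and $2$ in absolute value.

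First treat type $B_n$. Take $\Delta=\{\alpha_1,\ldots,\alpha_n\}$ with $\alpha_i=e_i-e_{i+1}$ for $i<n$ and $\alpha_n=e_n$ in $\QQ^n$, equipped with the standard inner product. The two Weyl orbits of coroots are the short coroots $O_s=\{\pm e_i \pm e_j \mid 1\leq i\neq j \leq n\}$ and the long coroots $O_\ell=\{\pm 2e_i \mid 1\leq i \leq n\}$. The fundamental weights are $\eta(\alpha_i)=e_1+\cdots+e_i$ for $i<n$ and $\eta(\alpha_n)=\tfrac{1}{2}(e_1+\cdots+e_n)$. For $\eta(\alpha_1)=e_1$, the pairings give $\langle e_1, O_s\rangle \subset \{-1,0,1\}$ and $\langle e_1, O_\ell\rangle \subset \{-2,0,2\}$, so $\eta(\alpha_1)$ is quasi-constant (indeed cominuscule). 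For $\eta(\alpha_n)$, the pairings give $\langle \eta(\alpha_n), O_s\rangle \subset \{-1,0,1\}$ and $\langle \eta(\alpha_n), O_\ell\rangle \subset \{-1,1\}$, so $\eta(\alpha_n)$ is quasi-constant (indeed minuscule). For $1<i<n$, the short coroots $e_1+e_2$ and $e_1+e_{n}$ both lie in $O_s$ and satisfy $\langle\eta(\alpha_i),e_1+e_2\rangle=2$ while $\langle\eta(\alpha_i),e_1+e_n\rangle=1$; thus $\eta(\alpha_i)$ fails to be quasi-constant.

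The case of type $C_n$ is entirely analogous, with the roles of long and short swapped on the coroot side. Take $\alpha_i=e_i-e_{i+1}$ for $i<n$ and $\alpha_n=2e_n$. The long coroots form the orbit $O_\ell=\{\pm e_i\pm e_j \mid 1 \leq i\neq j \leq n\}$ and the short coroots the orbit $O_s=\{\pm e_i \mid 1\leq i\leq n\}$. The fundamental weights are $\eta(\alpha_i)=e_1+\cdots+e_i$ for all $1\leq i\leq n$. For $\eta(\alpha_1)=e_1$ one checks $\langle e_1, O_\ell\rangle, \langle e_1, O_s\rangle \subset \{-1,0,1\}$, so $\eta(\alpha_1)$ is quasi-constant (minuscule). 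For $\eta(\alpha_n)=e_1+\cdots+e_n$, the pairings give $\langle \eta(\alpha_n), O_\ell\rangle \subset \{-2,0,2\}$ and $\langle \eta(\alpha_n), O_s\rangle \subset \{-1,1\}$, so $\eta(\alpha_n)$ is quasi-constant (cominuscule). For $1<i<n$, the long coroots $e_1+e_2$ and $e_1+e_n$ lie in the same orbit $O_\ell$ and pair with $\eta(\alpha_i)$ to give $2$ and $1$ respectively, so $\eta(\alpha_i)$ is not quasi-constant.

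Combining the two cases, the $\Delta$-dominant quasi-constant characters are exactly the nonnegative integer multiples of $\eta(\alpha_1)$ and $\eta(\alpha_n)$, \ie of the fundamental weights attached to the two extremal vertices of $\Dfr$. Conjugating by $W$ yields the full set of quasi-constant characters.
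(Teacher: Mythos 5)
Your proof is correct and follows essentially the same computation as the paper's: both use the Bourbaki coordinates for types $B_n$ and $C_n$, identify the two $W$-orbits of coroots, and show the interior fundamental weights $\eta(\alpha_i)$ ($1<i<n$) pair against the orbit $\{\pm e_j\pm e_k\}$ to give both values $1$ and $2$. The only difference is that you explicitly verify that the two extremal fundamental weights \emph{are} quasi-constant, whereas the paper handles that direction earlier in Remark~\ref{rmk-cominuscule} and uses the proof of Lemma~\ref{lem-BC} only for the negative direction.
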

 \begin{proof} Let \begin{equation}
\begin{array}{l} O_1=\{\pm e_i \pm e_j | 1 \leq i < j \leq n\}, \\ O_2=\{\pm 2e_i| 1\leq i \leq n\}, \\
 O_{1/2}=\{\pm e_i|1 \leq i \leq n\}. \end{array} \label{eq_roots_type_B-C}\end{equation} Identify $X^*(T)_{\QQ}$ and $X_*(T)_{\QQ}$ with $\QQ^n$ in such a way that $\Phi=O_1 \cup O_2$, $\Phi^{\vee}=O_1 \cup O_{1/2}$ in type $C_n$ and $\Phi=O_1 \cup O_{1/2}$, $\Phi^{\vee}=O_1 \cup O_2$ in type $B_n$. In each of the above four cases, the two $W$-orbits are $O_1$ and $O_j$, with $j \in \{2, 1/2\}$. In case $C_n$ (resp. $B_n$), choose $\Delta=\{e_i-e_{i+1}\}_{i=1}^{n-1} \cup\{2e_n\}$ (resp. $\Delta=\{e_i-e_{i+1}\}_{i=1}^{n-1} \cup
 \{e_n\}$). Then the fundamental weights are given by \[\begin{array}{cll} \eta(e_j-e_{j+1})& = & \sum_{i=1}^j e_i  \mbox{ for } 1\leq j \leq n-1 \\
\eta(e_n) & = & (\sum_{i=1}^ne_i)/2 \\
\eta(2e_n) & = & \sum_{i=1}^ne_i.
\end{array}\]   In both cases  $B_n$ and $C_n$, when $n \geq 3$ and $1 <j < n$, one has \[\{ \ |\langle \eta(e_j-e_{j+1}), \alpha^{\vee} \rangle | \ | \ \alpha^{\vee} \in O_1\}=\{0,1,2\}.\] Hence $\eta(e_j-e_{j+1})$ is not quasi-constant for all $j$, $1<j <n$.
 \end{proof}
 Since all multi-laced cases $B_n$, $C_n$, $G_2$ and $F_4$ have been treated, the proof of \Th~\ref{th-quasi-constant-cominuscule} is complete.
\end{proof}

\subsection{Classification II: The general case } 
\label{sec-proof-generalization}

\begin{lemma} \label{lem_quasi-constant_weyl-galois_closed} Each of the three properties `minuscule', `cominuscule' and `quasi-constant' is closed under the action of $W \rtimes \galk$. In other words, suppose $\chi \in X^*(T)$ and $\sigma \in W \rtimes \galk$. Then $\chi$ is minuscule (resp. cominuscule, quasi-constant)  if and only if $\sigma \chi$ is minuscule (resp. cominuscule, quasi-constant).
\begin{proof}
The action of $W \rtimes \galk$ is orthogonal relative to the perfect pairing $\langle, \rangle$. Hence \[\langle \sigma \chi , \alpha^{\vee} \rangle=\langle \chi , \sigma^{-1}\alpha^{\vee} \rangle\]
for all $\alpha \in \Phi$. The result follows.
\end{proof}

\end{lemma}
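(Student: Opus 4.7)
The plan is to reduce all three cases to the basic compatibility of the action of $W \rtimes \galk$ with the perfect pairing $\langle, \rangle$ and with the root and coroot sets. Specifically, I would first record the following standard facts: $W \rtimes \galk$ acts on both $X^*(T)$ and $X_*(T)$ preserving the pairing, stabilizes $\Phi$ and $\Phi^\vee$ setwise, and the bijection $\Phi \to \Phi^\vee$, $\alpha \mapsto \alpha^\vee$, is $W \rtimes \galk$-equivariant. From these it follows that for any $\sigma \in W \rtimes \galk$, any $\chi \in X^*(T)$ and any $\alpha \in \Phi$,
\[
\langle \sigma\chi, \alpha^\vee \rangle \;=\; \langle \chi, \sigma^{-1}\alpha^\vee \rangle.
\]

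For the minuscule case, this identity immediately shows that the multiset of pairings $\{\langle \sigma\chi, \alpha^\vee \rangle\}_{\alpha \in \Phi}$ is a permutation of $\{\langle \chi, \beta^\vee \rangle\}_{\beta \in \Phi}$, since $\sigma^{-1}$ permutes $\Phi^\vee$. Hence the condition that all such pairings lie in $\{-1,0,1\}$ is preserved by $\sigma$. For the quasi-constant case, I would use the finer observation that $\sigma^{-1}$ preserves each $W \rtimes \galk$-orbit inside $\Phi^\vee$ setwise. Thus for any such orbit $O$, the image of $O$ under $\langle \sigma\chi, \cdot \rangle$ coincides with its image under $\langle \chi, \cdot \rangle$; since the quasi-constant condition is defined orbit-by-orbit in terms of precisely this image (the ratios of any two nonzero values must lie in $\{-1,0,1\}$), it is preserved.

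For the cominuscule case, I would argue that $\sigma$ sends any basis of simple roots $\Delta$ to another basis $\sigma\Delta$, and in the process identifies the corresponding systems of fundamental weights via $\sigma \eta(\alpha) = \eta(\sigma\alpha)$ (with respect to the new basis), and similarly for fundamental coweights $\sigma\eta(\alpha^\vee) = \eta((\sigma\alpha)^\vee)$. Hence if $\chi = \eta(\alpha)$ with $\eta(\alpha^\vee)$ minuscule, then $\sigma\chi = \eta(\sigma\alpha)$ with $\eta((\sigma\alpha)^\vee)$ minuscule by the minuscule case already handled, so $\sigma\chi$ is cominuscule. The converse follows by applying $\sigma^{-1}$.

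The only step that requires a moment's care is the behavior of fundamental weights under the $\galk$ part of the action (the $W$ part being classical), but this is routine once one notes that $\galk$ permutes $\Delta$-dominant weights via its action on simple roots. All three statements then follow uniformly from the single identity $\langle \sigma\chi, \alpha^\vee \rangle = \langle \chi, \sigma^{-1}\alpha^\vee \rangle$, so I expect no genuine obstacle.
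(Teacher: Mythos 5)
Your proof is correct and takes essentially the same approach as the paper: the paper's (very terse) argument is precisely the identity $\langle \sigma\chi, \alpha^{\vee} \rangle = \langle \chi, \sigma^{-1}\alpha^{\vee} \rangle$ coming from orthogonality of the $W \rtimes \Gal(\bar k/k)$-action, followed by the observation that all three conditions can be read off the multiset (or orbit-wise multisets) of pairings, which you have spelled out in detail.
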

\begin{proof}[Proof of \textnormal{\Th~\ref{th-gen-quasi-constant-cominuscule}\ref{item-red-k-simple}}] It is clear that $\chi \in X^*(T)$ is quasi-constant for $(G,T)$ if and only if $s^*(\chi) \in X^*(\tilde{T})$ is quasi-constant for $(\tilde{G}, \tilde{T})$. The pair $(\tilde{G}, \tilde{T})$ decomposes as a product of pairs $(H_j, S_j)$, where each $H_j$ is $k$-simple, and $S_j$ is a maximal torus of $H_j$ defined over $k$. The $X^*(S_j)$ are stable under the action of $W \rtimes \galk$. Consequently, a character of $\tilde T$ is quasi-constant if and only if its pullback to $S_j$ is so for every $j$. 
\end{proof}

Given that \Th~\ref{th-gen-quasi-constant-cominuscule}\ref{item-red-k-simple} has been proved, it will be assumed for the rest of \S\ref{sec-proof-generalization} that $G$ is $k$-simple.

\begin{proof}[Proof of \textnormal{\Th~\ref{th-gen-quasi-constant-cominuscule}\ref{item-k-simple}} ,``$\Longrightarrow$'': ]
Suppose $\chi \in X^*(T)$ is quasi-constant.  We show that $\chi$ satisfies (i)-(iii) of
~\ref{item-k-simple}. Without loss of generality, we may assume $\chi$ is $\Delta$-dominant.

For all $i$ ($1\leq i \leq d$), the pullback $s_i^*(\chi) \in X^*(\tilde T_i)$ is quasi-constant for $\tilde G_i$. Define $\xi_i \in X^*(\tilde T_i)$ as follows: If $s_i^*(\chi)=0$,  set $\xi_i=0$. Otherwise, \Th~\ref{th-quasi-constant-cominuscule} yields $c_i \in \ZZ_{\geq 1}$ such that $s_i^*(\chi)/c_i$ is either minuscule or cominuscule; set $\xi_i=s_i^*(\chi)/c_i$. In this case, there exists $\alpha_i \in \Delta_i$ such that $\xi_i=\eta(\alpha_i)$, see \S\ref{sec-fund-minusc-cominusc}.

For every pair $(i,j)$ with $\xi_i\neq 0$ and $\xi_j \neq 0$, it remains to show that $c_i=c_j$ and that $\xi_i,\xi_j$ are either both minuscule or both cominuscule. Since $G$ is $k$-simple, $\galk$ acts transitively on $\{\Dfr_s\}_{s=1}^d$ (see \S\ref{sec-dynkin}). In particular, the Dynkin diagrams $\Dfr_1, \ldots, \Dfr_d$ are pairwise isomorphic (and so too are the groups $\tilde G_1, \ldots \tilde G_d$, as they are simply-connected). 
Fix a pair $(i,j)$ with $\xi_i, \xi_j \neq 0$ and $\sigma \in \galk$ mapping $\Dfr_i$ to $\Dfr_j$.

Assume first that all the $\Dfr_s$ are simply-laced. Then $\Phi_j$ forms a single Weyl group orbit. Thus $\sigma \alpha_i \in W\alpha_j$, so $\alpha_i$ and $\alpha_j$ are conjugate under $W \rtimes \galk$. Moreover, since $\Dfr_i$ and $\Dfr_j$ are simply-laced, both $\xi_i$ and $\xi_j$ are minuscule (\Rmk~\ref{rmk-cominuscule}). Finally, $c_i=c_j$, for otherwise the set 
$\{ \ 
|\langle \chi, \tau \alpha_i^{\vee}\rangle | \ | \ \tau \in W \rtimes \galk
\}
$
would contain the two nonzero distinct values 
$c_i, c_j$ 
(and $0$ when 
$|\Dfr_i|>1$
).

We are left with the case that neither $\Dfr_i$ nor $\Dfr_j$ is simply-laced. So each of $\Dfr_i$ and $\Dfr_j$ admits no non-trivial automorphisms. Hence  an element of $\galk$ which maps $\Dfr_i$ to itself (as a set) must in fact fix it pointwise.

Moreover, by \Rmk~\ref{rmk-cominuscule}, either both $\Dfr_i$ and $\Dfr_j$ are of type $B_n$, or both are of type $C_n$. In each of the cases $B_n$ and $C_n$, one extremity of the Dynkin diagram is special but not cospecial, while the other is cospecial but not special. All of the other vertices in types $B_n$ and $C_n$ are neither special nor cospecial.

We claim that either $\alpha_i$ and $\alpha_j$ are both special, or both cospecial. Assume for a contradication that this is not the case. By symmetry we may assume that $\alpha_i$ is special and $\alpha_j$ is cospecial.  Using the notation~\eqref{eq_roots_type_B-C}, for $j \in \{1/2, 1, 2\}$, let $\tilde O_j$ be a $W \rtimes \galk$-orbit of coroots which identifies with $O_j$ on both the $i$th and $j$th factors.

In case $C_n$, one has \begin{subequations} \begin{equation}
\{ \
|\langle \chi, \alpha^{\vee} \rangle | \ | \ \alpha^{\vee} \in \tilde O_1
\}=
\{0,2c_i, c_j\}, \label{eq_a_Cn_not_quasi_constant}\end{equation}
\begin{equation}
\{ \ 
|\langle \chi, \alpha^{\vee} \rangle | \ |  \ \alpha^{\vee} \in \tilde O_{1/2}
\}=
\{0,c_i, c_j\}. \label{eq_b_Cn_not_quasi_constant}\end{equation}  \end{subequations} Since $\chi$ is quasi-constant,~\eqref{eq_a_Cn_not_quasi_constant} implies $2c_i=c_j$, while~\eqref{eq_b_Cn_not_quasi_constant} implies $c_i=c_j$. This is a contradiction since $c_i \neq 0$ and $c_j \neq 0$ by assumption. The same contradiction is reached in case $B_n$, where $(O_1, O_{1/2})$ is replaced by $(O_2, O_{1})$. 
This contradiction proves the claim.

By Lemma~\ref{lem_quasi-constant_weyl-galois_closed}, $\sigma$ maps the unique special (resp. cospecial) vertex of $\Dfr_i$ to the unique special (resp. cospecial) vertex of $\Dfr_j$. Together with claim that was just established, this shows that 
$\sigma \alpha_i=\alpha_j$.

Finally, $\sigma \alpha_i=\alpha_j$
implies that 
$
\{ \ 
|\langle \chi, \alpha^{\vee} \rangle| \ | \ \alpha^{\vee} \in \tilde O_1
\}
$
is equal to either 
$\{0, c_i, c_j\}$, 
$\{0, 2c_i, 2c_j\}$ 
or $\{c_i,c_j\}$. (\footnote{Although it is not used in the proof, we note that the third possibility $\{c_i,c_j\}$ can only occur in type $B_2=C_2$.}). Since $\chi$ is quasi-constant, we conclude either way that $c_i=c_j$. This completes the proof that $\chi$ satisfies conditions (i)-(iii) of \Th~\ref{th-gen-quasi-constant-cominuscule}\ref{item-k-simple}
\end{proof}

\begin{proof}[Proof of \textnormal{\Th~\ref{th-gen-quasi-constant-cominuscule}\ref{item-k-simple}} ,``$\Longleftarrow$'': ]
Conversely, suppose that $\chi \in X^*(T)$ and that $s^*\chi=m(\xi_1, \ldots,x_d)$, where $m, \xi_1, \ldots, \xi_d$ satisfy (i)-(iii) of \ref{item-k-simple}. We need to check that $\chi$ is quasi-constant. 

Assume $\sigma \in \galk$,  $\alpha \in \Phi$ and  $\langle \chi , \alpha^{\vee} \rangle, \langle \chi , \sigma \alpha^{\vee} \rangle \neq 0$.
We have to show that $|\langle \chi , \alpha^{\vee} \rangle | = | \langle \chi , \sigma \alpha^{\vee} \rangle |$.
 Let $i, j \in \{1,2, \ldots, d\}$ such that $\alpha \in \Dfr_i$ and $\sigma \alpha \in \Dfr_j$ (the possibility $i=j$ is not excluded).

Since $\langle \chi , \alpha^{\vee} \rangle, \langle \chi , \sigma \alpha^{\vee} \rangle \neq 0$, $\xi_i$ and $\xi_j$ are both nontrivial. By condition (iii) of~\ref{item-k-simple}, $\xi_i$ and $\xi_j$ are either both minuscule or both cominuscule. By \Rmk~\ref{rmk-cominuscule}, $\xi_i=\eta(\alpha_i)$ and $\xi_j=\eta(\alpha_j)$ for some $\alpha_i \in \Delta_i$ and $\alpha_j \in \Delta_j)$. 

Suppose first that $\xi_i$ and $\xi_j$ are both minuscule. Then $$|\langle \chi , \alpha^{\vee} \rangle | =m |\langle \xi_i, \alpha^{\vee} \rangle |=m  =m |\langle \xi_j, \sigma \alpha^{\vee} \rangle|= | \langle \chi , \sigma \alpha^{\vee} \rangle |.$$

Now assume $\xi_i$ and $\xi_j$ are both cominuscule. Since $G$ is $k$-simple,  $\Dfr_i$ and $\Dfr_j$ are either both of type $B_n$ or both of type $C_n$. One checks directly using~\eqref{eq_roots_type_B-C} that in type $C_n$ both
$|\langle \eta(\alpha_i), \alpha^{\vee} \rangle|$ and $|\langle \eta(\alpha_j), \sigma\alpha^{\vee} \rangle|$ are equal to $1$ (resp. $2$) if $\alpha^{\vee}$ belongs to the $W \rtimes \galk$ orbit $\tilde O_{1/2}$ (resp. $\tilde O_{1}$) and that in type $B_n$  both
$|\langle \eta(\alpha_i), \alpha^{\vee}\rangle|$ and $|\langle \eta(\alpha_j), \alpha^{\vee} \rangle|$ are equal to $1$ (resp. $2$) if $\alpha^{\vee}$ belongs to $\tilde O_{1}$ (resp. $\tilde O_{2}$).
\end{proof}

\subsection{Duality}
\label{sec-duality} Here we explain the duality  between quasi-constant characters and cocharacters of semisimple $G$, see Construction~\ref{const-duality} . The key properties of the construction follow directly from the classification and are provided in \Prop~\ref{prop-duality}. 

If $\rfr \subset X^*(T)_{\QQ}$ (resp. $\rfr \subset X_*(T)_{\QQ}$ whose image is not contained in the center of $G$)
is a quasi-constant ray (\Def~\ref{def-quasi-constant lines}), then $s_i^*(\rfr)$ (resp. $\pr^i_*(\rfr)$) is a quasi-constant ray in $X^*(\tilde T_i)$ (resp. $X_*(T_i^{\ad})$). 
\begin{construction} \label{const-duality} Let $\rfr$ be a quasi-constant ray in $X_*(T)_{\QQ}$.  We construct a ``dual quasi-constant ray'' $\rfr^{\vee} \subset X^*(T)_{\QQ}$.
 By \Th~\ref{th-gen-quasi-constant-cominuscule}\ref{item-k-simple} and \S\ref{sec-fund-minusc-cominusc}, there exists a basis of simple roots $\Delta \subset \Phi$ such that, for every $i$ ($1 \leq i \leq d$), $\pr^i_*(\rfr)$ is either trivial or contains a fundamental coweight $\eta(\alpha_i^{\vee})$ for some $\alpha_i \in \Delta_i$.  Let $\rfr_{\ad}^{\vee}$ be the ray in $X^*( T^{\ad})_{\QQ}=\prod_{i=1}^nX^*( T^{\ad}_i)_{\QQ}$ spanned by the vector $\chi=(\chi_i)_{i=1}^n$ whose $i$th coordinate is defined by
\begin{equation}\chi_i=\left\{ \begin{array}{clc} \eta(\alpha_i) & {\rm if } & \pr^i_*(\rfr) \neq 0 \\ 0 & {\rm if } & \pr^i_*(\rfr)=0 \end{array} \right.  \label{eq-dual-quasi-constant-line}\end{equation}
Set $\rfr^{\vee}:=\pr^*(\rfr_{\ad}^{\vee})$. 
\end{construction}
\begin{rmk} \label{rmk dual line const}
It is clear that there is a construction dual to~\ref{const-duality} which starts with a quasi-constant ray in $X^*(T)_{\QQ}$ and produces a quasi-constant ray in $X_*(T)_{\QQ}$.
\end{rmk}

\begin{proposition} 
\label{prop-duality} Construction~\textnormal{\ref{const-duality}} satisfies the following properties:
\begin{enumerate}
\item \label{item-ladvee-quasi-constant} The ray $\rfr^{\vee}_{\ad}$ in $X^*(T^{\ad})_{\QQ}$ is quasi-constant.

\item \label{item-lvee-quasi-constant} The ray $\rfr^{\vee} \subset X^*(T)_{\QQ}$ is quasi-constant.

\item \label{item-lvee-restriction-levi} The quasi-constant ray $\rfr^{\vee} \subset X^*(T)_{\QQ}$ is the restriction of a ray in $X^*(\cent(\rfr))$ (see \textnormal{\Rmk~\ref{rmk-cent-line}}).

\item \label{item-dual-line-maximal-levi} The Levi $\cent(\rfr)$ of $G$ is the maximal Levi satisfying property~\ref{item-lvee-restriction-levi}.

\item 
\label{item-duality-bijection}
If $G$ is semisimple, then $\rfr \to \rfr^{\vee}$ is a bijection between quasi-constant rays in $X_*(T)$ and those in $X^*(T)$.
\end{enumerate}
 \end{proposition}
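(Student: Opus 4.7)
The plan is to deduce all five assertions from Construction~\ref{const-duality} and the classification Theorem~\ref{th-gen-quasi-constant-cominuscule}, together with a short $W_L$-invariance argument for part~(3). After reducing to the $k$-simple case via Theorem~\ref{th-gen-quasi-constant-cominuscule}\ref{item-red-k-simple}, Theorem~\ref{th-gen-quasi-constant-cominuscule}\ref{item-k-simple} writes $\rfr$ as the $\qgeqz$-span of a tuple $(\xi_i)_{i=1}^{d}$ in which each nonzero $\xi_i$ is a fundamental coweight $\eta(\alpha_i^{\vee})$, with all nontrivial $\xi_i$ simultaneously either minuscule or cominuscule. The exchange $\eta(\alpha_i^{\vee}) \leftrightarrow \eta(\alpha_i)$ swaps minuscule and cominuscule by~\S\ref{sec-fund-minusc-cominusc}, so the tuple $(\eta(\alpha_i))$ defining $\rfr^{\vee}_{\ad}$ also has all its nontrivial entries of the same type. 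Applying Theorem~\ref{th-gen-quasi-constant-cominuscule}\ref{item-k-simple} to $G^{\ad}$ then yields~(1). Part~(2) follows, since $\pr^{*}\colon X^{*}(T^{\ad})_{\QQ} \to X^{*}(T)_{\QQ}$ preserves pairings with coroots --- the coroots of $T$ and $T^{\ad}$ being identified under the central isogeny (\S\ref{sec-simply-conn-adjoint}).

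For~(3), let $L = \cent(\rfr)$ and observe that the simple roots of the $i$th $\bar k$-simple factor of $L^{\ad}$ are exactly $\Delta_i \setminus \{\alpha_i\}$, since $\pr^i_{*}(\rfr)$ is spanned by the fundamental coweight $\eta(\alpha_i^{\vee})$. For every $\gamma \in \Delta_i \setminus \{\alpha_i\}$ we have $\langle \eta(\alpha_i), \gamma^{\vee} \rangle = 0$, so $s_{\gamma}$ fixes $\eta(\alpha_i)$; since $W_{L^{\ad}_i}$ is generated by these simple reflections, $\eta(\alpha_i)$ is $W_{L^{\ad}_i}$-invariant, hence orthogonal to every coroot in $\Phi_{L^{\ad}_i}$ (choose $w \in W_{L^{\ad}_i}$ with $w\beta \in \Delta_{L^{\ad}_i}$ and use invariance). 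A suitable positive integer multiple of $\sum_{i} \eta(\alpha_i)$ therefore lies in $X^{*}(L^{\ad})$, and pulling back through $\pr$ yields an element of $X^{*}(L)_{\QQ}$ whose restriction to $T$ spans $\rfr^{\vee}$. Part~(4) is then immediate: any Levi $L' \supsetneq L$ containing $T$ has a simple root system strictly larger than $\Delta_L$, hence contains some $\alpha_i$ with $\pr^i_{*}(\rfr) \neq 0$; but every representative of $\rfr^{\vee}$ pairs, up to a positive scalar, to $1$ with $\alpha_i^{\vee}$, contradicting $\rfr^{\vee} \subset X^{*}(L')_{\QQ}$.

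For~(5), the dual construction of Remark~\ref{rmk dual line const} furnishes a candidate inverse. In the semisimple case, the classification parameterizes both quasi-constant rays in $X_{*}(T)_{\QQ}$ and those in $X^{*}(T)_{\QQ}$ by the same combinatorial data --- an index set $I \subset \{1, \ldots, d\}$, a vertex $\alpha_i \in \Delta_i$ for each $i \in I$, and a single minuscule-or-cominuscule type --- and the two constructions interchange this data via \S\ref{sec-fund-minusc-cominusc}, so they are mutually inverse. The principal obstacle in this plan is the $W_L$-invariance step of~(3), which genuinely requires each $\xi_i$ to be a single fundamental coweight and not an arbitrary linear combination; once the classification supplies this shape, the rest of the argument is essentially bookkeeping.
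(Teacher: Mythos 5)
Your proof is correct and follows essentially the same strategy as the paper: parts~(1), (2), and~(5) reduce to Theorem~\ref{th-gen-quasi-constant-cominuscule} and the minuscule/cominuscule swap of \S\ref{sec-fund-minusc-cominusc} exactly as in the text, and your treatment of~(3)--(4) is an equivalent repackaging (the paper observes that (3) and (4) together amount to the single statement $\langle \rfr^{\vee}, \alpha^{\vee}\rangle = 0$ for $\alpha \in \Delta$ iff $\alpha \in \Delta_L$, checked directly on simple coroots via $\langle \alpha_i, \eta(\alpha_i^{\vee})\rangle = 1$, whereas you route through $W_L$-invariance of the $\eta(\alpha_i)$ and then a separate maximality argument, which is the same computation with an extra Weyl-group step). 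One small imprecision worth noting: in~(4) you should argue that the set of roots orthogonal to $\rfr^{\vee}$ is exactly $\Phi_L$ (using $\Delta$-dominance of $\rfr^\vee$ so that a coroot pairing to zero must be supported on $\Delta_L^{\vee}$) rather than speak of the "simple root system" of a possibly non-standard Levi $L'$; with that clarification the argument is complete.
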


\begin{rmk} \label{rmk-cent-line}
If $\nu \in X_*(T)$ and $m \in \ZZ \setminus \{0\}$, then $\cent(\nu)=\cent(m\nu)$. Indeed, as centralizers of subtori of $T$,  both $\cent(\nu)$ and $\cent(m\nu)$ are Levi subgroups of $G$ containing the maximal torus $T$. Thus each is determined by the subset of $\Delta$ orthogonal to the cocharacter. But for all $\alpha \in \Delta$, one has $\langle \alpha, \nu \rangle=0$ if and only if $\langle \alpha, m\nu \rangle=0$. Therefore the centralizer of a ray (or line) in $X_*(T)_{\QQ}$ is well-defined.
\end{rmk}

\begin{proof}[Proof of \textnormal{\Prop~\ref{prop-duality}}:] 
Part~\ref{item-ladvee-quasi-constant} is a direct consequence of \Th~\ref{th-gen-quasi-constant-cominuscule}\ref{item-k-simple}. A ray $\mfr \subset X^*(T^{\ad})_{\QQ}$ is quasi-constant if and only if every element of $\pr^*\mfr \cap X^*(T)$ is. This gives~\ref{item-lvee-quasi-constant}.

The combination of Parts~\ref{item-lvee-restriction-levi} and~\ref{item-dual-line-maximal-levi} is equivalent to $\langle \rfr^{\vee}, \alpha^{\vee} \rangle=0$ for $\alpha \in \Delta$ if and only if $\alpha \in \Phi(\cent(\rfr), T) \cap \Delta$ (the simple roots pertaining to the Levi $\cent(\rfr)$). By~\eqref{eq-dual-quasi-constant-line}, $\langle \rfr^{\vee}, \beta^{\vee} \rangle\neq 0$ for $\beta \in \Delta$ if and only if $\beta=\alpha_i$ for some $i$ satisfying $\pr^i_*(\rfr) \neq 0$. The latter holds if and only if $\pr^i_*(\rfr)$ contains the fundamental coweight $\eta(\alpha_i^{\vee})$. Since $\langle \alpha_i, \eta(\alpha_i^{\vee}) \rangle=1$, we deduce that $\langle \alpha_i,  \pr_*^i(\rfr) \rangle \neq 0$ (and so also $\langle \alpha_i,  \rfr \rangle \neq 0$) if and only if $\beta=\alpha_i \not \in \Phi(\cent(\rfr), T)$.

Finally, if $G$ is semisimple, then its fundamental weights (resp. fundamental coweights) furnish a basis of $X^*(T)_{\QQ}$ (resp. $X_*(T)_{\QQ}$). Thus~\ref{item-duality-bijection} follows from \Th~\ref{th-gen-quasi-constant-cominuscule}. 
\end{proof}

\section{The Hodge line bundle is quasi-constant}
\label{sec-hodge-q-constant}
This \S~proves \Th~\ref{th-intro-omega-quasi-constant}, that the Hodge line bundle is quasi-constant.
The proof relies heavily on Deligne's analysis of symplectic embeddings of Shimura data \cite[\S1.3]{Deligne-Shimura-varieties}.

As in \loccitn, throughout \S\ref{sec-hodge-q-constant} fix $\qbar$ to be the algebraic closure of $\QQ$ in $\CC$. This choice is justified by the fact that the reflex field $E$ of the Shimura datum $\gx$ is defined as a subfield of $\CC$. We use the notation of \S\ref{sec-intro-shimura} and \S\ref{sec-notation}. 
 In particular, $\Delta$ denotes the set of simple roots of $\TT_{\qbar}$ in $\GG_{\qbar}$ and $h \in \XX$ with associated cocharacter $\mu$. Let $\Phi^+$ be the system of positive roots corresponding to $\Delta$. We normalize $\mu$ so that 
$\langle \alpha, \mu \rangle=1$ 
for 
$\alpha \in \Phi$  
if and only if 
$\alpha \in \Phi^+ \setminus \Phi(\LL,\TT)$.

Let $V$ be a $2g$-dimensional $\QQ$-vector space and $Q$ a non-degenerate, $\QQ$-valued alternating form on $V$. Let $GSp(V, Q)$ be the group of symplectic similitudes of $(V, Q)$.  Let $\std:GSp(V, Q) \to GL(V)$ be the tautological representation. Set $\rho:=\std \circ \psi$, where $\psi$ is the symplectic embedding~\eqref{eq-symplectic-embedding}. 

The crux of the proof is to reduce to a question about fundamental weights by a careful analysis of the restriction of $\rho$ to the Levi $\LL$ (\S\ref{sec-res-levi}). The latter can be solved by a simple case-by-case computation (\S\ref{sec-fund-weight-lemma}). 
\subsection{Set-up of the proof}
\label{sec-set-up-hodge-line}

As usual, put $\SS=\res_{\CC/\RR}\gm$. Throughout the proof, we abbreviate $\eta_{\omega}:=\eta_{\omega}(\psi)$ for the Hodge character of $\psi$.

 The representation $\rho:=\std \circ \psi$ of $\mathbf G$ is defined over $\mathbf Q$, since it is the composition of two morphisms which are both defined over $\mathbf Q$. The composition of $\rho_{\RR}:=\rho \otimes \RR$ with  $h:\SS \rightarrow \GG_{\RR}$ yields a polarized $\mathbf R$-Hodge structure of type $\{(-1,0),(0,-1)\}$; denote it $h_{\psi}:\mathbf S \rightarrow GL(V \otimes \mathbf R)$. The pair $(V, h_{\psi})$ is a polarized $\mathbf Q$-Hodge structure.

\subsection{Restriction to the Levi}
\label{sec-res-levi}
The restriction of $\rho_{\qbar}$ to $\LL_{\qbar}$ is equal to $V^{-1,0} \oplus V^{0,-1}$, the sum of the graded pieces\footnote{Note that, in general, the two graded pieces $V^{-1,0}, V^{0,-1}$ are not irreducible as $\LL_{\qbar}$-representations. However, they are irreducible in the special case $\gx=\shdagsp$.} of the $\RR$-Hodge structure $(V_{\RR}, h_{\psi})$. The character $-\eta_{\omega}$ is the determinant of the $\LL_{\qbar}$-representation $V^{-1,0}$. Hence $-\eta_{\omega}$ is the sum of the $\TT_{\qbar}$-weights of $V^{-1,0}$ (counted with multiplicity).

 Let $\tilde \rho$ (resp. $\tilde V^{-1,0}$, $\tilde V^{0,-1}$) be the pullback of $\rho$ (resp. $V^{-1,0}$, $V^{0,-1}$) to $\tilde \GG$ (resp. $\tilde \LL_{\qbar}$). One has \[\res^{\tilde \GG_{\qbar}}_{\tilde \LL_{\qbar}} \tilde \rho_{\qbar}=\tilde V^{-1,0} \oplus \tilde V^{0,-1}. \]
The $\tilde \LL_{\qbar}$-representations $\tilde V^{-1,0}$ and $\tilde V^{0,-1}$ are dual to one another via the alternating form $Q$.

Since $\mu$ is minuscule, so is its projection $\pr_*\mu$ to $\GG^{\ad}$. Let $\tilde \mu$ be the fractional lifting ("rel\`evement fractionaire", \cite[1.3.4]{Deligne-Shimura-varieties}) of $\pr_*\mu$ to $\tilde \GG_{\qbar}$. Let $\rho'$ be an irreducible factor of $\tilde \rho_{\qbar}$. By Lemma 1.3.5 of \loccitn, $\rho'$ has two $\tilde \mu$-weights given by $a$ and $a+1$ for some $a \in \QQ$. In other words, as $\xi$ runs through the $\tilde \TT_{\qbar}$-weights of $\tilde \rho_{\qbar}$, the pairing $\langle \xi, \tilde \mu \rangle$ takes the two values $a$ and $a+1$.
\begin{lemma} 
Let $\xi$ be a weight of $\rho'$. 
Then $\xi$ is a weight of $\tilde V^{-1,0}$
(resp. $\tilde V^{0,-1}$) 
if and only if $\langle \xi, \tilde \mu \rangle =a+1$ 
(resp. $\langle \xi, \tilde \mu \rangle=a$). \label{lem a a+1}\end{lemma}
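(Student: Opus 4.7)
The plan is to compute both sides directly, relating $\tilde{\mu}$ to $\mu$ and applying Schur's lemma to the central discrepancy. First, I would translate the Hodge decomposition into $\mu$-weight language: under $\rho\circ\mu\colon\gm\to GL(V_{\qbar})$, the subspace $V^{-1,0}$ (resp.\ $V^{0,-1}$) is the weight-$1$ (resp.\ weight-$0$) eigenspace, so every $\TT_{\qbar}$-weight $\chi$ of $V^{-1,0}$ (resp.\ $V^{0,-1}$) satisfies $\langle\chi,\mu\rangle=1$ (resp.\ $\langle\chi,\mu\rangle=0$). This is a direct unwinding of the normalization of $\mu$ recorded at the beginning of \S\ref{sec-hodge-q-constant}.

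Next I would compare $\tilde{\mu}$ to $\mu$ via the quasi-section $s$. By the defining property of the fractional lifting, $\pr_{*}(s_{*}\tilde{\mu})=\pr_{*}\mu$ in $X_{*}(\GG^{\ad})_{\QQ}$, so $c:=s_{*}\tilde{\mu}-\mu\in X_{*}(Z(\GG))_{\QQ}$. Choose an integer $N\geq 1$ with $N\tilde{\mu}\in X_{*}(\tilde{\TT})$ and $Nc\in X_{*}(Z(\GG))$. Since $Nc$ takes values in $Z(\GG)$, the cocharacter $\rho\circ(Nc)\colon\gm\to GL(V_{\qbar})$ commutes with $\tilde{\rho}(\tilde{\GG})=\rho(s(\tilde{\GG}))$. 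Schur's lemma then implies that on any irreducible $\tilde{\GG}_{\qbar}$-subrepresentation $\rho'\subset V_{\qbar}$, the cocharacter $\rho\circ(Nc)$ acts as a scalar $z\mapsto z^{Nb}$ for some $b=b(\rho')\in\QQ$.

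Combining these ingredients, for any $\tilde{\TT}_{\qbar}$-weight vector $v\in\rho'\cap\tilde V^{-1,0}$ of weight $\xi$, the identity
\[ z^{N\langle\xi,\tilde{\mu}\rangle}\,v \;=\; \tilde{\rho}\bigl((N\tilde{\mu})(z)\bigr)\,v \;=\; \rho\bigl((N\mu)(z)\bigr)\,\rho\bigl((Nc)(z)\bigr)\,v \;=\; z^{N(1+b)}\,v \]
forces $\langle\xi,\tilde{\mu}\rangle=1+b$; the parallel computation for a weight vector $v'\in\rho'\cap\tilde V^{0,-1}$ gives $\langle\xi',\tilde{\mu}\rangle=b$. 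Since by the hypothesis invoked from \cite[Lem.~1.3.5]{Deligne-Shimura-varieties} the only $\tilde{\mu}$-weights on $\rho'$ are $a$ and $a+1$, and $1+b>b$, the matching forces $b=a$, and the lemma follows.

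The main potential obstacle is keeping the sign conventions straight in the first step --- in particular, matching the normalization of $\mu$ to the Hodge type $\{(-1,0),(0,-1)\}$. Once that bookkeeping is settled, the rest is a clean application of Schur's lemma to the central cocharacter $c$, combined with Deligne's minuscule weight analysis.
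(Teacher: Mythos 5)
Your argument reconstructs the mechanism that the paper is delegating to the proof of \cite[Lem.~1.3.5]{Deligne-Shimura-varieties}: the paper's proof is only the citation, and what it is citing is precisely the comparison of $\mu$ and $\tilde\mu$ via a central discrepancy, which is what you carry out. The translation of the Hodge bigrading into $\mu$-eigenvalues $1$ and $0$, the identification of $c=s_*\tilde\mu-\mu$ as a fractional central cocharacter, and the matching of $\{1+b,b\}$ against $\{a+1,a\}$ are all correct and in the intended spirit.

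One step needs tightening. You invoke Schur's lemma to conclude that $\rho\circ(Nc)$ acts by a scalar on $\rho'$, but Schur only applies once you know that $\rho((Nc)(z))$ maps $\rho'$ into itself, and this is not automatic: $\rho((Nc)(z))$ lies in $\mathrm{End}_{\tilde\GG_{\qbar}}(V_{\qbar})$, and when an irreducible occurs with multiplicity $>1$, an element of the commutant can permute the copies. The standard fix is to observe that the $Z(\GG)^{\circ}$-eigenspace decomposition of $V_{\qbar}$ is a decomposition into $\GG$-submodules (centrality), and then refine each eigenspace into irreducible $\tilde\GG_{\qbar}$-subrepresentations; for any $\rho'$ arising this way, $\rho((Nc)(z))$ does restrict to a scalar on $\rho'$. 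Since the conclusion of the lemma depends only on the isomorphism class of $\rho'$ (hence on the weight set and the scalar $a$), proving it for such a compatible choice proves it for all choices. With that sentence added, your proof is complete and matches what the paper intends by citing Deligne.
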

\begin{proof} This follows easily from the proof of the aforementioned Lemma 1.3.5 of \loccitn \end{proof}

Let $\tilde L_i$ be the intersection of $\tilde G_i$ with the centralizer, in $\tilde \GG_{\qbar}$, of the fractional lifting $\tilde \mu$. Then for every $i$, either $\tilde L_i$ is the Levi of a maximal parabolic of $\tilde G_i$, or $\tilde L_i=\tilde G_i$. For every $i$ with $\tilde L_i \neq \tilde G_i$, let $\alpha_i$ be the unique simple root of $\tilde G_i$ which is not a root of $\tilde L_i$.

\begin{lemma} 
\label{lem-highest-weight}
Let $\rho'$ be an irreducible factor of $\tilde \rho_{\qbar}$ with highest weight $\xi$. Then $\xi$ is a weight of $\tilde V^{-1,0}$.
\end{lemma}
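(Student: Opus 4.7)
The plan is to combine \Lem~\ref{lem a a+1} with the observation that the fractional lifting $\tilde\mu \in X_*(\tilde{\TT})_{\QQ}$ is dominant with respect to the chosen basis of simple roots $\Delta$.

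First, I would reread the normalization of $\mu$ from the beginning of \S\ref{sec-hodge-q-constant}: for every $\alpha \in \Delta$ one has
\[
\langle \alpha, \mu\rangle = \begin{cases} 1 & \text{if } \alpha \in \Delta \setminus \Phi(\LL,\TT),\\ 0 & \text{if } \alpha \in \Delta \cap \Phi(\LL,\TT),\end{cases}
\]
so $\mu$ is $\Delta$-dominant. Since the root data of $\GG$, $\GG^{\ad}$ and $\tilde \GG$ are identified via the central isogenies (\S\ref{sec-simply-conn-adjoint}), the pairings $\langle \alpha, \pr_*\mu \rangle$ and $\langle \alpha, \tilde\mu\rangle$ agree with $\langle \alpha, \mu\rangle$ for each simple root $\alpha$, so $\tilde\mu$ is likewise $\Delta$-dominant in $X_*(\tilde\TT)_{\QQ}$.

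Second, given an irreducible factor $\rho'$ of $\tilde\rho_{\qbar}$ with highest weight $\xi$, every other $\tilde\TT_{\qbar}$-weight $\xi'$ of $\rho'$ satisfies $\xi - \xi' = \sum_{\alpha \in \Delta} n_\alpha \alpha$ with $n_\alpha \in \zgeqz$. Pairing with the dominant $\tilde\mu$ yields
\[
\langle \xi, \tilde\mu\rangle - \langle \xi', \tilde\mu\rangle = \sum_{\alpha\in\Delta} n_\alpha \langle \alpha, \tilde\mu\rangle \geq 0,
\]
so $\xi$ maximizes $\langle \cdot, \tilde\mu\rangle$ among the weights of $\rho'$.

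Third, by Deligne's \cite[\Lem~1.3.5]{Deligne-Shimura-varieties}, recalled just above, the pairing $\langle \cdot, \tilde\mu\rangle$ takes exactly the two values $a$ and $a+1$ on the weights of $\rho'$. Combined with the previous step, this forces $\langle \xi, \tilde\mu\rangle = a+1$. \Lem~\ref{lem a a+1} then gives that $\xi$ is a weight of $\tilde V^{-1,0}$, as required.

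The only delicate point is the bookkeeping needed to transfer the dominance of $\mu$ through the projection $\pr$ and the fractional lifting; once this is in place, the argument is essentially a one-line consequence of dominance together with \Lem~\ref{lem a a+1}.
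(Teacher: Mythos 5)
Your proof is correct and takes essentially the same approach as the paper: both arguments hinge on the $\Delta$-dominance of $\tilde\mu$, the characterization of the highest weight as maximizing the pairing with a dominant (co)weight, Deligne's fact that exactly the two $\tilde\mu$-weights $a$ and $a+1$ occur, and \Lem~\ref{lem a a+1} to finish.
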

\begin{proof} Let $a$ and $a+1$ be the two $\tilde \mu$-weights of $\rho'$. Since $\rho'$ admits two distinct $\tilde \mu$-weights, it admits a $\tilde \TT_{\qbar}$-weight $\xi'$ whose pairing with $\tilde \mu$ is different from that of $\xi$ with $\tilde \mu$. By the property characterizing the highest weight, $\xi-\xi'$ is a non-negative, $\ZZ$-linear combination of simple roots. Since $\tilde \mu$ is $\Delta$-dominant, $\langle \xi-\xi', \tilde \mu \rangle \geq 0$. But by our choice of $\xi'$, one has $\langle \xi-\xi', \tilde \mu \rangle \neq  0$. Hence $\langle \xi-\xi', \tilde \mu \rangle =1$ and $\langle \xi, \tilde \mu \rangle =a+1$. So $\xi$ is a weight of $\tilde V^{-1,0}$ by Lemma~\ref{lem a a+1}.
\end{proof}

We use Lemma~\ref{lem-highest-weight} to deduce a positivity statement characterizing those weights of $\tilde \rho_{\qbar}$ which are weights of $\tilde V^{-1,0}$.

\begin{lemma}
\label{lem-hodge-filt}
Let $\xi$ be a $\tilde T_{\qbar}$-weight of $\tilde \rho_{\qbar}$.
\begin{enumerate}
\item 
\label{item-lem-hodge-filt-1}
If $\xi$ is a weight of $\tilde V^{-1,0}$, then $\langle \xi, \alpha_i^{\vee} \rangle \geq 0$ for all $i$.
\item 
\label{item-lem-hodge-filt-2}
As a partial converse, if $\langle \xi, \alpha_i^{\vee} \rangle > 0$ for some $i$, then $\xi$ is a weight of $\tilde V^{-1,0}$.
 \end{enumerate}  
 \end{lemma}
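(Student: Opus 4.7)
The plan hinges on Lemma~\ref{lem a a+1}: for the irreducible factor $\rho'$ of $\tilde\rho_{\qbar}$ in which $\xi$ occurs, with two $\tilde\mu$-weights $a$ and $a+1$, the weight $\xi$ lies in $\tilde V^{-1,0}$ precisely when $\langle \xi, \tilde\mu\rangle = a+1$. So both parts reduce to controlling the value of $\langle \xi, \tilde\mu\rangle$ after shifting $\xi$ by $\pm\alpha_i$, combined with $\mathfrak{sl}_2$-theory for the $\alpha_i$-root string.

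The key auxiliary computation I would carry out first is that $\langle \alpha_i, \tilde\mu\rangle = 1$ for every $i$ with $\tilde L_i\neq\tilde G_i$. Indeed, $\alpha_i$ is a root and hence factors through the adjoint torus, so $\langle \alpha_i, \tilde\mu\rangle = \langle \alpha_i, \pr_*\mu\rangle = \langle \alpha_i, \mu\rangle$ by the defining property of the fractional lift. By the normalization of $\mu$ recalled at the start of \S\ref{sec-hodge-q-constant}, this pairing equals $0$ if $\alpha_i\in\Phi(\LL,\TT)$ and $1$ otherwise; by the defining property of $\alpha_i$, we are in the second case.

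For part~\ref{item-lem-hodge-filt-1}, I argue contrapositively: if $\xi$ is a weight of $\tilde V^{-1,0}$ and $\langle \xi, \alpha_i^\vee\rangle < 0$ for some $i$, then the $\alpha_i$-string through $\xi$ extends at least one step upward, so $\xi+\alpha_i$ is still a weight of $\rho'$. But then $\langle \xi+\alpha_i, \tilde\mu\rangle = (a+1)+1 = a+2$, contradicting Lemma~\ref{lem a a+1}. For the partial converse~\ref{item-lem-hodge-filt-2}, suppose $\langle \xi, \alpha_i^\vee\rangle > 0$; by the same $\mathfrak{sl}_2$-analysis, $\xi-\alpha_i$ is also a weight of $\rho'$. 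Both $\langle \xi, \tilde\mu\rangle$ and $\langle \xi-\alpha_i, \tilde\mu\rangle=\langle \xi, \tilde\mu\rangle-1$ must lie in $\{a, a+1\}$, which forces $\langle \xi, \tilde\mu\rangle = a+1$, and Lemma~\ref{lem a a+1} yields that $\xi$ is a weight of $\tilde V^{-1,0}$.

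No step is really an obstacle here: the whole proof amounts to translating ``$\langle \xi, \alpha_i^\vee\rangle \neq 0$'' into the existence of a weight neighbor $\xi\pm\alpha_i$ of $\rho'$ and then reading off its $\tilde\mu$-pairing. The only place requiring care is the initial observation $\langle \alpha_i, \tilde\mu\rangle = 1$, which tightly couples the fractional-lift construction to the minuscule/dominant normalization of $\mu$.
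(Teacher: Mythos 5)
Your proof is correct, and it differs from the paper's in two respects. First, the paper establishes the equivalence of parts~\ref{item-lem-hodge-filt-1} and~\ref{item-lem-hodge-filt-2} via the duality $x \mapsto -x$ coming from the self-duality of $\tilde\rho_{\qbar}$ and the $Q$-duality between $\tilde V^{-1,0}$ and $\tilde V^{0,-1}$, and then proves only part~\ref{item-lem-hodge-filt-1}; you prove both parts directly. Second, for part~\ref{item-lem-hodge-filt-1} the paper invokes Lemma~\ref{lem-highest-weight}, expands $\xi_h-\xi$ as a nonnegative $\ZZ$-linear combination of simple roots, deduces that the coefficient of $\alpha_i$ vanishes, and finishes with the root-datum fact that $\langle\alpha,\beta^\vee\rangle\le 0$ for distinct simple roots $\alpha\ne\beta$. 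You instead run a one-step $\mathfrak{sl}_2$ root-string argument in each direction, using only that $\rho'$ has exactly the two $\tilde\mu$-weights $a$ and $a+1$ together with Lemma~\ref{lem a a+1}, the computation $\langle\alpha_i,\tilde\mu\rangle=1$, and the standard fact that $\langle\xi,\alpha_i^\vee\rangle<0$ (resp.\ $>0$) forces $\xi+\alpha_i$ (resp.\ $\xi-\alpha_i$) to be a weight of $\rho'$. Your route is shorter and avoids Lemma~\ref{lem-highest-weight} entirely; the paper's version recycles that lemma, which it proves anyway for use in \S\ref{sec-fund-weight-lemma}. The auxiliary computation $\langle\alpha_i,\tilde\mu\rangle=1$ is obtained the same way in substance in both arguments --- you pass through the adjoint projection, the paper through the central fractional factor $\nu$ --- and these are equivalent since $\nu$ pairs trivially with roots.
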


\begin{proof}
Since $\tilde \rho_{\qbar}$ is self-dual, its $\tilde \TT_{\qbar}$-weights are closed under $x \mapsto -x$. Since $\tilde V^{-1,0}$ is dual to $\tilde V^{0,-1}$, the weights of $\tilde V^{-1,0}$ are mapped bijectively onto those of $\tilde V^{0,-1}$ via $x \mapsto -x$. It follows that parts~\ref{item-lem-hodge-filt-1} and~\ref{item-lem-hodge-filt-2} of the lemma are equivalent. So assume $\xi$ is a weight of $\tilde V^{-1,0}$ and consider~\ref{item-lem-hodge-filt-1}.

Let $\rho'$ be an irreducible factor of $\tilde \rho_{\qbar}$, which admits $\xi$ as a $\tilde \TT_{\qbar}$-weight. Let $\xi_h$ be the highest weight of $\rho'$.
Since the highest weight is $\Delta$-dominant, one has $\langle \xi_h, \alpha_i^{\vee} \rangle \geq 0$. We need to use the hypothesis that $\xi$ is a weight of $\tilde V^{-1,0}$ to conclude that also $\langle \xi, \alpha_i^{\vee} \rangle \geq 0$.
Write \begin{equation} \xi_h-\xi=\sum_{\alpha \in \Delta} n(\alpha) \alpha \label{eq sum simple roots}, \end{equation} with $n(\alpha) \geq 0$ for all $\alpha \in \Delta$.

Since $\mu$ is minuscule and $\alpha_i \in \Phi^+ \setminus \Phi(\LL,\TT)$, one has $\langle \alpha_i, \mu \rangle=1$. Since $\mu=\tilde \mu \nu$ with $\nu: \gm \rightarrow \GG_{\qbar}$ fractional and central, the adjoint actions of $\mu(z)$ and $\tilde \mu (z)$ coincide. Hence also $\langle  \alpha_i, \tilde \mu \rangle=1$ and $\tilde \mu$ is $\Delta$-dominant.

Combining our assumption that $\xi$ is a weight of $\tilde V^{-1,0}$ with Lemmas~\ref{lem a a+1} and~\ref{lem-highest-weight}, we have $\langle \xi_h-\xi, \tilde \mu \rangle=0$. Therefore the multiplicity $n(\alpha_i)=0$ in~\eqref{eq sum simple roots}. A simple property of root data states that if $\langle \alpha, \beta^{\vee} \rangle >0$ for some $\alpha, \beta \in \Delta$, then $\alpha=\beta$ \cite[Lemma 2.51]{Knapp-beyond-intro-book}. Hence $\langle \xi_h-\xi, \alpha_i^{\vee} \rangle \leq 0$. But $\langle \xi_h , \alpha_i^{\vee} \rangle \geq 0$ because $\xi_h$ is $\Delta$-dominant. So  $\langle \xi, \alpha_i^{\vee} \rangle \geq 0$, as was to be shown.
\end{proof}

\subsection{Equality of fundamental weight multiplicities}
\label{sec-fund-weight-lemma}

Since $\eta_{\omega} \in X^*(\LL)$, one has $\langle \eta_{\omega}, \alpha^{\vee} \rangle =0$ for all $\alpha \in \Phi(\LL, \TT)$.  Set $s_i^*(\eta_{\omega}):=\eta_{\omega,i}$, the pullback of $\eta_{\omega}$ to $\tilde L_i$ (\S\ref{sec-decomp-alg-closed}). Suppose $\tilde L_i \neq \tilde G_i$. Then $\eta_{\omega, i}$ is a multiple, say $m_i$, of the fundamental weight $\eta(\alpha_i)$. Since $\mu$ is minuscule, $\alpha_i$ is special (\S\ref{sec-fund-minusc-cominusc}). By definition, $m_i=\langle \eta_{\omega, i}, \alpha_i^{\vee} \rangle=\langle \eta_{\omega}, \alpha_i^{\vee} \rangle$.

The next lemma shows that the multiplicities $m_i$ are constant on $\galq$-orbits.

\begin{lemma} Suppose $\tilde G_i$ is $\galq$-conjugate to $\tilde G_j$. Then $m_i=m_j$. \label{lem multiplicities indep} \end{lemma}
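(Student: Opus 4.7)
The key identity $-\eta_\omega = \sum_{\xi \in \mathrm{wt}(V^{-1,0})} \xi$ in $X^*(\TT_\qbar)$ gives, upon pairing with a coroot $\alpha_k^\vee$, the expression $-m_k = \sum_{\xi \in \mathrm{wt}(\tilde V^{-1,0})} \langle \xi, \alpha_k^\vee \rangle$. First I would apply Lemma~\ref{lem-hodge-filt}: any weight $\xi$ of $\tilde\rho_\qbar$ with $\langle \xi, \alpha_k^\vee \rangle > 0$ automatically lies in $\mathrm{wt}(\tilde V^{-1,0})$, while those with $\langle \xi, \alpha_k^\vee \rangle = 0$ contribute nothing. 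Summation can thus be taken over the full weight multiset of $\tilde\rho_\qbar$:
\[
-m_k \;=\; \sum_{\substack{\xi \in \mathrm{wt}(\tilde\rho_\qbar) \\ \langle \xi, \alpha_k^\vee \rangle > 0}} \langle \xi, \alpha_k^\vee \rangle.
\]
The crucial point is that the right-hand side depends only on the $\QQ$-rational data $(\tilde\GG, \tilde\rho)$ and on the single coroot $\alpha_k^\vee$, with no reference to the subrepresentation $V^{-1,0}$ (which is only $E$-rational and not $\galq$-stable).

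Second, choosing $\TT$ to be $\QQ$-rational, the multiset $\mathrm{wt}(\tilde\rho_\qbar) \subset X^*(\tilde\TT_\qbar)$ is stable under the action of $W \rtimes \galq$, since $\tilde\rho = \rho \circ s$ is defined over~$\QQ$. The substitution $\xi \mapsto \tau^{-1}\xi$ in the displayed sum then shows that $m_k$, regarded as a function of the coroot, depends only on the $W \rtimes \galq$-orbit of $\alpha_k^\vee$.

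Third, I must produce a $W \rtimes \galq$-conjugacy between $\alpha_i^\vee$ and $\alpha_j^\vee$. Pick $\sigma \in \galq$ with $\sigma\tilde G_i = \tilde G_j$; then $\sigma\alpha_i^\vee$ is a coroot of $\tilde G_j$ marking the unique special vertex of $\Dfr_j$ associated with the minuscule cocharacter $(\sigma\tilde\mu)^{(j)}$, while $\alpha_j^\vee$ marks the special vertex associated with $\tilde\mu^{(j)}$. Using that every minuscule cocharacter of a simple, simply-connected group has a unique Weyl orbit of simple coroots paired with it to~$1$, a comparison inside $W_{\tilde G_j}$ matches the two vertices, yielding $\alpha_j^\vee \in W_{\tilde G_j}\cdot \sigma\alpha_i^\vee \subset (W \rtimes \galq)\cdot \alpha_i^\vee$. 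Combined with the second step, this finishes the proof.

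The principal difficulty is the third step in the case $\sigma \notin \gale$: the cocharacters $\tilde\mu^{(j)}$ and $(\sigma\tilde\mu)^{(j)}$ are then a priori different minuscule cocharacters of $\tilde G_j$, so no naive Galois transport matches $\sigma\alpha_i^\vee$ with $\alpha_j^\vee$. The Weyl-adjustment argument must combine the minuscule hypothesis, the uniqueness (up to $W_{\tilde G_j}$) of the special simple root for a given minuscule cocharacter, and the Galois-invariance of the function of coroots constructed in the second step to force the desired conjugacy.
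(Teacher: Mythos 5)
Your steps~1 and~2 follow the paper's argument almost exactly: both rewrite $m_\epsilon$ (up to sign) as a sum over the weight multiset of $\tilde\rho_{\qbar}$ and then exploit that this multiset is $W\rtimes\galq$-stable because $\tilde\rho$ is defined over $\QQ$. In step~1 you should be slightly more careful about multiplicities: it is not enough that every $\xi$ with $\langle\xi,\alpha_k^\vee\rangle>0$ lies in $\mathrm{wt}(\tilde V^{-1,0})$; you also need its multiplicity in $\tilde V^{-1,0}$ to equal its multiplicity in $\tilde\rho_{\qbar}$. This does hold, because Lemma~\ref{lem-hodge-filt}\ref{item-lem-hodge-filt-1} shows $-\xi\notin\mathrm{wt}(\tilde V^{-1,0})$, so $\xi\notin\mathrm{wt}(\tilde V^{0,-1})$ by duality; the paper records this as $\Sscr_\epsilon\cap -\Sscr=\emptyset$. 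That is a small omission, easily repaired.

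The genuine gap is in step~3, and you have correctly sensed it. You must exhibit $\alpha_j^\vee\in(W\rtimes\galq)\cdot\alpha_i^\vee$, and the route you sketch — comparing the special vertices for the two minuscule cocharacters $\tilde\mu^{(j)}$ and $(\sigma\tilde\mu)^{(j)}$ and invoking uniqueness of the special vertex for a \emph{fixed} cocharacter — does not by itself relate the two different vertices, as you yourself observe. Worse, your proposed fix of ``the Galois-invariance of the function of coroots constructed in the second step'' cannot close the gap: that $f$ is constant on $W\rtimes\galq$-orbits tells you nothing about whether $\alpha_i^\vee$ and $\alpha_j^\vee$ lie in a common orbit, which is exactly what you are trying to establish. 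The paper resolves this point by a much simpler observation, entirely independent of $\tilde\mu$: since $\sigma$ induces a Dynkin diagram isomorphism $\Dfr_i\to\Dfr_j$ preserving the labelling of the highest root, $\sigma\alpha_i$ is again a \emph{special} simple root of $\Dfr_j$, as is $\alpha_j$. All special roots of a given simple root system have the same length (this is immediate from the tables and from the minuscule/cominuscule correspondence of \S\ref{sec-fund-minusc-cominusc}), and in a simple root system roots of equal length form a single $W$-orbit. Hence $\alpha_j^\vee=w\sigma\alpha_i^\vee$ for some $w\in W$, and your steps~1--2 then give $m_i=m_j$. So the skeleton of your proof is sound; replace the cocharacter-comparison in step~3 with this equal-length argument and the proof is complete.
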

\begin{proof}
Let $\sigma \in \galq$ conjugate $\tilde G_i$ to $\tilde G_j$. Observe that the coroots $\sigma\alpha_i^{\vee}$  and  $\alpha_j^{\vee}$ are in the same Weyl group orbit. Indeed, $\alpha_i$ and $\alpha_j$ are both special, hence have the same length (\S\ref{sec-dynkin}). Finally, two roots are in the same Weyl group orbit if and only if the same is true of the corresponding coroots.  
Write $w\sigma\alpha_i^{\vee}=\alpha_j^{\vee}$ with $w \in W$.

Denote the set of $\tilde \TT_{\qbar}$-weights of $\tilde V^{-1,0}$ by $\Sscr$. Given $\xi \in \Sscr$, let $m(\xi)$ denote the multiplicity of $\xi$ as a weight of $\tilde V^{-1,0}$. For $\epsilon \in \{i,j\}$, let $\Sscr_{\epsilon}=\{\xi \in \Sscr | \langle \xi , \alpha_{\epsilon} \rangle >0\}$.   
Then \begin{equation}
m_{\epsilon}=\sum_{\xi \in \Sscr} m(\xi) \langle \xi, \alpha_{\epsilon}^{\vee} \rangle. 
\label{eq-m-epsilon}
\end{equation}  

By Lemma~\ref{lem-hodge-filt}\ref{item-lem-hodge-filt-1}, every summand in~\eqref{eq-m-epsilon} is nonnegative.  
Since $\Sscr \cup -\Sscr$ is the set of $\tilde \TT_{\qbar}$-weights of $\tilde \rho_{\qbar}$, 
it is closed under $x \mapsto \tau x$ for all $\tau \in W \rtimes \galq$. 
By Lemma~\ref{lem-hodge-filt}\ref{item-lem-hodge-filt-2}, the map $\Sscr \cup -\Sscr \to \Sscr \cup -\Sscr$, $x \mapsto w\sigma x$, restricts to a bijection of $\Sscr_i$ onto $\Sscr_j$. 
Another application of Lemma~\ref{lem-hodge-filt}\ref{item-lem-hodge-filt-1} gives $\Sscr_{\epsilon} \cap -\Sscr=\emptyset$. 
Thus $m(\xi)$ equals the multiplicity of $\xi$ as $\tilde {\TT}_{\qbar}$-weight of $\tilde{\rho}_{\qbar}$ for all $\xi \in \Sscr_i \cup \Sscr_j$. Hence $\xi \in \Sscr_i$ implies  
 $m(\xi)=m(w\sigma \xi)$.  Thus $m_i=m_j$.

\end{proof}

\begin{proof}[Proof of \textnormal{\Th~\ref{th-intro-omega-quasi-constant}}:] Since $\GG$ and $\tilde \GG$ have the same adjoint group, one has $\langle \eta_{\omega}, \alpha^{\vee} \rangle =\langle \tilde \eta_{\omega}, \alpha^{\vee} \rangle$ for all roots $\alpha$. It is therefore equivalent to show that $\tilde \eta_{\omega}$ is quasi-constant. Suppose a root $\alpha$ and $\sigma \in W \rtimes \galq$ satisfy $\langle \tilde \eta_{\omega}, \alpha^{\vee} \rangle \neq 0$ and $\langle \tilde \eta_{\omega}, \sigma \alpha^{\vee} \rangle \neq 0$. Let $\tilde G_i$ (resp. $\tilde G_j$) be the unique factor of $\tilde \GG_{\qbar}$ of which $\alpha$ (resp. $\sigma \alpha$) is a root. Then $\langle \tilde \eta_{\omega}, \alpha^{\vee} \rangle=\langle \tilde \eta_{\omega, i}, \alpha^{\vee} \rangle$ and $\langle \tilde \eta_{\omega}, \sigma \alpha^{\vee} \rangle =\langle \tilde \eta_{\omega, j}, \sigma \alpha^{\vee} \rangle$. By Lemma~\ref{lem multiplicities indep}, one has $\tilde \eta_{\omega, i}=m \eta(\alpha_i)$ and $\tilde \eta_{\omega, j}=m\eta(\alpha_j)$).

In types $A_n$ and $D_n$, the fundamental weights $\eta(\alpha_i),\eta(\alpha_j)$ are minuscule, hence $|\langle \eta(\alpha_i), \alpha^{\vee}\rangle|=|\eta(\alpha_j), \sigma \alpha^{\vee}\rangle|=1$ (by the assumptions above both pairings are nonzero).

In types $B_n$ and $C_n$ ($n \geq 2$),
the pairing $\langle \eta(\alpha_i), \alpha^{\vee} \rangle$ has absolute value 1 if $\alpha^{\vee}$ is short and 2 if $\alpha^{\vee}$ is long (again because the pairing was assumed nonzero).  We conclude by observing that  the property of being long (resp. short) is preserved under $W \rtimes \galq$.

\end{proof}
\subsection{Invariance of the Hodge ray}
\begin{proof}[Proof of \textnormal{\Cor~\ref{cor-intro-hodge-line-invariant}}:]
By the proof of \Th~\ref{th-intro-omega-quasi-constant}, specifically \S\ref{sec-fund-weight-lemma}, one has $m \in \ZZ$ such that $\eta_{\omega,i}=m\eta(\alpha_i)$ when $\tilde L_i \neq \tilde G_i$ and $\eta_{\omega, i}=0$ when $\tilde L_i=\tilde G_i$. It remains to show that $m<0$. For this purpose, we use the dictionary between ample line bundles on a flag variety and dominant regular weights (\cf \cite[II.4.4]{jantzen-representations} and the ensuing remarks). 

Let $\PP$ be the parabolic subgroup of $\GG_E$ with Levi $\LL$ which stabilizes the Hodge filtration of $\ad \circ h$. By our conventions, given $\alpha \in \Phi \setminus \Phi(\LL, \TT)$, the root group $U_{\alpha}$ is contained in $\PP_{\qbar}$ if and only if $\alpha$ is negative. Let $I \subset \Delta$ be the type of $\PP$.

Write $\Pcal$ for the flag variety $\GG_{\qbar}/\PP_{\qbar}$ and $\Pcal_g$ in the Siegel case.
Over $\CC$, the projective variety $\Pcal_\CC$ is known as the compact dual of $\XX$. Given $\lambda \in X^*(\LL)$, the associated line bundle $\Lscr(\lambda)$ on $\Pcal$ is ample if and only if $\langle \lambda, \alpha^{\vee} \rangle>0$ for all $\alpha \in \Delta\setminus I$ (\loccitn).

The embedding~\eqref{eq-symplectic-embedding} induces an embedding of compact duals
$\Pcal \hookrightarrow \Pcal_g$. A first application of the above dictionary gives that, in the Siegel case, the Hodge line bundle $\omega_g$  is anti-ample on $\Pcal_g$. Since the pullback of an ample line bundle along a finite map is ample, the line bundle $\omega(\psi)$ is anti-ample on $\Pcal$. Thus a second application of the dictionary gives $\langle \eta_{\omega}, \alpha^{\vee} \rangle <0$ for all $\alpha \in \Delta \setminus I$. It follows that $m<0$ as desired.

\end{proof}
\subsection{A counter-example when $\GG^{\ad}$ is not $\QQ$-simple} \label{sec-ex-G-not-simple} 
In \Rmk~\ref{rmk-hodge-line}, it was claimed that the assumption "$\GG^{\ad}$ is $\QQ$-simple" is necessary for the invariance of the ray generated by the Hodge line bundle (\Cor~\ref{cor-intro-hodge-line-invariant}). Intuitively, it seems natural to expect that the factors of the Hodge line bundle $\omega(\psi)$ corresponding to different $\QQ$-factors of $\GG$ can vary independently of one another as one varies the symplectic embedding $\psi$ \eqref{eq-symplectic-embedding}. Some extra care is required because of the technical restrictions that a symplectic embedding of $\gx$ imposes on the center of $\GG$. So we give an example to show that the above intuition is in fact correct.

Let $g \geq 1$ be an integer. 
Let $\HH_g$ be the connected, reductive, split $\QQ$-group defined as the subgroup of the $g$-fold product $GL(2)^g$  where the $g$ components have the same determinant. Then the center of $\HH_g$ is one-dimensional and one has an isomorphism of $\QQ$-groups $\HH_g^{\ad} \cong PGL(2)^g$. 

Let $a,b$ be positive integers. For every $\QQ$-algebra $R$, mapping $(A,B) \in \HH_2(R)$ to the vector $(A,\ldots ,B,\ldots) \in \HH_{a+b}(R)$ with $a$ components equal to $A$ followed by $b$ components equal to $B$ gives an embedding $f_{a,b}:\HH_2 \hookrightarrow \HH_{a+b}$.

The group $\HH_g$ is isomorphic to the maximal rank subgroup of $GSp(2g)$ given by the sub-root system of all long roots in $GSp(2g)$ (in the notation of~\eqref{eq_roots_type_B-C}, this sub-root system is $O_2$). This gives an embedding of split $\QQ$-groups $k_g:\HH_g \hookrightarrow GSp(2g)$. Equivalently, if one defines $GSp(2g)$ using the alternating form on $\QQ^{2g}$ given by $J_g:=\left( \begin{array}{cc} 0 & -I_g \\
I_g & 0
\end{array}\right)$, then the image may also be described as $g$ "embedded squares", where the $i$th $2 \times 2$ square occupies the $(i,i),(i, g+i),(g+i,i),(g+i,g+i)$ entries of a $2g \times 2g$ matrix in $GSp(2g, R)$ (and all the other entries are $0$).   

As usual write $z=x+iy$ for $z \in \SS(\RR)$ (\S\ref{sec-set-up-hodge-line}). Define $h_g:\SS \to GSp(2g)_{\RR}$ corresponding to the standard formula $$h_g(z) \mapsto \left(\begin{array}{cc}
xI_g & -yI_g \\ yI_g & xI_g
\end{array}\right).$$ Recall that the $GSp(2g, \RR)$-conjugacy class of $h_g$ is $\XX_g$ pertaining to the Siegel datum $(GSp(2g), \XX_g)$.

Define $j_{g}: \SS \to (\HH_g)_{\RR}$  by $j_{g}(z)=(h_1(z), \ldots ,h_1(z))$, \ie $j_g$ is $h_1$ followed by the diagonal embedding.  Let $\YY_g$ be the $\HH_g(\RR)$-conjugacy class of $j_g$. Note that $\YY_g$ is different from the product $\XX_1^g$.
The pair $(\HH_g, \YY_g)$ is a Shimura datum.

Fix $g=a+b$. One has $j_g=f_{a,b} \circ j_2$ and $h_g=k_g \circ j_g$. Thus:
\begin{example} 
\label{ex-not-simple} The embeddings $f_{a,b}:\HH_2 \hookrightarrow \HH_{a+b}$ and $k_g: \HH_g \hookrightarrow GSp(2g)$ defined above satisfy:
\begin{enumerate}
\item Both $f_{a,b}$ and $k_g$ induce morphisms of Shimura data.
\item Both $k_g$ and $k_g \circ f_{a,b}$ are symplectic embeddings; in particular $(\HH_g, \YY_g)$ is of Hodge type.

\end{enumerate}
\end{example}
It remains to understand the Hodge characters associated to the embeddings in Example~\ref{ex-not-simple}.
With respect to the alternating form $J$ above, a $\QQ$-split
maximal torus $\TT_g$ in $GSp(2g)$ is given by the diagonal matrices of the form $\diag(t_1c, \ldots, t_gc, t_1^{-1}c, \ldots t_g^{-1}c)$. The Hodge character $\eta_g \in X^*(\TT_g)$ of $(GSp(2g), \XX_g)$ is then given by $(t_1 \cdots t_g)^{-1}c^{-g}$ and the pullback $s^*\eta_g$ to $\tilde \TT_g$ in $Sp(2g)$ is $(t_1\cdots t_g)^{-1})$. 

A compatible choice of $\QQ$-split maximal torus in $\HH_g$ is given by $g$-tuples of the form $$\left(c\left(\begin{array}{cc} s_1 & \\ &s_1^{-1}
\end{array}\right),\ldots,c\left(\begin{array}{cc} s_g & \\ &s_g^{-1}
\end{array}\right) \right).$$ Thus one finds:
\begin{example} The Hodge characters of \textnormal{Example~\ref{ex-not-simple}} satisfy the following: 
\begin{enumerate}
\item The Hodge character $k_g^*\eta_g$ is given by $(s_1 \cdots s_g)^{-1}c^{-g}$.
\item The Hodge character $(k_g \circ f_{a,b})^*\eta_g$ is given by $s_1^{-a}s_2^{-b}c^{-g}$.
\item The pullback of $(k_g \circ f_{a,b})^*\eta_g$ to the corresponding maximal torus of $\tilde \HH_2=SL(2) \times SL(2)$ is $s_1^{-a}s_2^{-b}$.
\end{enumerate}   
In particular, \textnormal{\Cor~\ref{cor-intro-hodge-line-invariant}} fails miserably for the symplectic embedding $(k_g \circ f_{a,b}):(\HH_2, \YY_2) \hookrightarrow (GSp(2g), \XX_{g})$.
\end{example}
The example above for $\HH_2$ is easily generalized in several ways, in particular to all $\HH_g$.
\section{Further applications, motivation and open problems} \label{sec-further-motivation}
\subsection{Uniform principal purity for quasi-constant cocharacters}
\label{sec-purity}
As a further application of quasi-constant (co)characters, we combine the duality construction for quasi-constant cocharacters (\Prop~\ref{prop-duality}) with our previous results on Hasse generators in \cite{Goldring-Koskivirta-Strata-Hasse,Goldring-Koskivirta-zip-flags} to deduce \Th~\ref{th-hasse-quasi-constant}. 

For the convenience of the reader, we recall the main result \cite[\Th~3.2.3]{Goldring-Koskivirta-Strata-Hasse} and the notions `orbitally $p$-close', `$L$-ample' and `$(p, L)$-admissible' which are used in its formulation  (\loccitn, \Defs~N.5.1,~N.5.3). Let $G$ be a connected, reductive $\fp$-group, $\mu \in X_*(T)$ a cocharacter and set $L:=\cent(\mu)$. Define $\Delta_L \subset \Delta$ by $\Delta_L:=\Delta \cap \Phi(L,T)$. A character $\chi \in X^*(T)$ is \underline{orbitally $p$-close} if, for all 
$\alpha \in \Phi$ satisfying $\langle \chi, \alpha^{\vee} \rangle \neq 0$ and all  $\sigma \in W \rtimes \galfp$, one has
\begin{equation} 
\label{eq-orb-p-close}
\left| \frac{ \langle \chi, \sigma\alpha^{\vee} \rangle}{\langle \chi, \alpha^{\vee} \rangle} \right| \leq p-1. \end{equation}
Further, $\chi$ is \underline{$L$-ample} if $\langle \chi , \alpha^{\vee} \rangle<0$ for all $\alpha \in \Delta \setminus \Delta_L$. Finally, $\chi$ is \underline{$(p, L)$ admissible} if it is both orbitally $p$-close and $L$-ample. Then \Th~3.2.3 of \loccit states that every $(p, L)$-admissible $\chi \in X^*(T)$ is a Hasse generator of $\GZip^\mu$ (\S\ref{sec-intro-hasse}).

The orbitally $p$-close condition \ref{eq-orb-p-close} is a natural weakening -- depending on $p$ -- of the quasi-constant one~\eqref{def quasi constant}; in particular a quasi-constant character is orbitally $p$-close for all primes $p$. As for `quasi-constant',  the condition `orbitally $p$-close' naturally extends to an element of $X^*(T)_{\QQ}$, even if it is not a character. The character $\chi$ is $L$-ample if and only if the associated line bundle $\Lscr(\chi)$ is anti-ample on the flag variety $G/P$, where $P$ is the parabolic of type $\Delta_L$ containing $L$ (\S\ref{sec-ex-G-not-simple}).

\begin{proof}[Proof of \textnormal{\Th~\ref{th-hasse-quasi-constant}}:] By assumption $\mu$ is quasi-constant. Without loss of generality, we may assume $\mu$ is $\Delta$-dominant. Let $\langle \mu \rangle$ be the quasi-constant ray spanned by $\mu$. By \Prop~\ref{prop-duality}, the dual ray
$\langle \mu \rangle^{\vee} \subset X^*(T)_{\QQ}$ afforded by Construction~\ref{const-duality} is quasi-constant.  
Let $\mu^*$ be a nontrivial element of $\langle \mu \rangle^{\vee} \cap X^*(T)$. Then $\mu^*$ is quasi-constant. This proves~\ref{item-levi-admits-quasi-constant}. 

By construction, $\mu^*$ is $\Delta$-dominant. Thus $-\mu^*$ is $\Delta$-anti-dominant. By ~\eqref{eq-dual-quasi-constant-line},  $-\mu^*$ is $L$-ample. Therefore $-\mu^*$ is $(p, L)$-admissible. So part~\ref{item-quasi-constant-implies-hasse-inv-generator} follows from \cite[\Th~3.2.3]{Goldring-Koskivirta-Strata-Hasse}.
\end{proof}
\subsection{`Quasi-constant' as unification of `minusucule' and `cominuscule'}
\label{sec-motivation-unification-co-min}

In view of \Th~\ref{th-quasi-constant-cominuscule}, when $G$ is simple and $k$ is algebraically closed, the property `quasi-constant' captures the union of the two properties `minuscule' and `cominuscule', up to scalar multiples.

The equivalent definitions of `cominuscule' which appear in the literature (\cf
\S\ref{sec-cominuscule}) have several drawbacks.
First, they are only valid for semisimple $G$ (\footnote{An artificial extension to reductive $G$ can be given, for instance, by declaring that $\chi$ is cominuscule if its restriction to a maximal torus of the derived subgroup of $G$ is cominuscule.}).
This goes against the philosophy of Deligne, Serre, Langlands and others which highlights the importance (and necessity) of considering {\it all} connected reductive groups.
Second, even for semisimple $G$, the definition of cominuscule requires choosing a basis $\Delta \subset \Phi$ of simple roots.
Third, the definition of cominuscule makes reference to `minuscule' and presupposes that the relationship between `minuscule' and `fundamental weights' has already been understood in the semisimple case.

By contrast, both the definitions of `minuscule' (\cf \S\ref{sec-minuscule}) and  `quasi-constant' (\Def~\ref{def quasi constant})
 have none of these issues: They apply uniformly to general $G$, require no choice of basis and do not presuppose anything beyond the root datum of $(G,T)$.

For these reasons, we suggest that a conceptual implication of \Th~\ref{th-quasi-constant-cominuscule} may be that, among `cominuscule' and `quasi-constant', the latter is the more natural notion. The validity of our suggestion should be tested by applying the above two notions in various different contexts.

\subsection{`Quasi-constant' as a test case in our program} \label{sec-test-case}
Recall that, as mentioned in \S\ref{sec-intro}, our general program aims to connect (A) Automorphic Algebraicity, (B) $\GZip$-Geometricity, and (C) Griffiths-Schmid Algebraicity. The basic objects in (B) and (C) -- stacks $\GZip^\mu$ and Griffiths-Schmid manifolds -- are both essentially associated to data $(G, [\mu])$, where $G$ is a connected, reductive $k$-group and $[\mu]$ is the conjugacy class of a cocharacter $\mu \in X_*(G)$. In the case of $\GZip^\mu$, $k=\fp$, while for Griffiths-Schmid manifolds $k=\QQ$. 
As we briefly recall below, much more is known about both (B) and (C) when the cocharacter $\mu$ is minuscule, thanks to the theory of Shimura varieties\footnote{Many of the more sophisticated results concerning Shimura varieties require the more stringent hypothesis that $\mu$ is of Hodge or abelian type.}. 

It is therefore natural to seek generalizations of the minuscule condition on which to test questions and conjectures regarding $\GZip^\mu$, Griffiths-Schmid manifolds and the connections between the two.  We propose the quasi-constant condition as such a generalization. Below, we single out three questions concerning (B)-(C)  about which a considerable amount is known in the minuscule case, but which are wide-open beyond that.

In addition to their intrinsic interest and contribution to our program, progress on these questions is likely to have significant applications to the Langlands correspondence between automorphic representations and Galois representations. The link between the Langlands correspondence and Griffiths-Schmid algebraicity was studied extensively in Carayol's program (see \cite{Carayol-LDS-1,Carayol-LDS-2,Carayol-LDS-3,Carayol-laumon-volume} and \cite{Green-Griffiths-Kerr-CBMS-Texas}). In the context of Hodge-type Shimura varieties, applications of the link with $G$-Zips to the Langlands correspondence were studied in \cite{Goldring-Koskivirta-Strata-Hasse}, where in many cases Galois representations were associated to automorphic representations with non-degenerate limit of discrete series archimedean component, and pseudo-representations were associated to spaces of coherent cohomology modulo a prime power.

\subsubsection{Griffiths-Schmid manifolds} \label{sec-GS-mflds}
The complex manifolds that bear their name were introduced by Griffiths-Schmid almost half-a-century ago in 1969, \cite{Griffiths-Schmid-homogeneous-complex-manifolds}.
However, their study underwent several decades of relative hibernation, until it was revived by Carayol in a series of papers initiated in the late 1990's and later also in a series of works by Griffiths and his school (\cf \cite{Green-Griffiths-Kerr-Mumford-Tate-Domains-book,Green-Griffiths-Kerr-CBMS-Texas,KerrSp4,Griffiths-Robles-Toledo-not-algebraic}).
The main cause for the dormant period was probably that, since their introduction, it was widely believed that -- in a precise sense recalled below --  `most' Griffiths-Schmid manifolds are not algebraic. This belief was recently confirmed by Griffiths-Robles-Toledo \cite{Griffiths-Robles-Toledo-not-algebraic}.

Suppose $\GG$ is a connected, reductive $\QQ$-group and $\XX$ is a $\GG(\RR)$-conjugacy class of a morphism of $\RR$-groups $h:\SS \to G_{\RR}$ satisfying Deligne's axioms for a Shimura variety (2.1.1.2) and (2.1.1.3) of \cite{Deligne-Shimura-varieties}, but not necessarily satisfying axiom (2.1.1.1) of \loccit
That is, assume that $\ad h(i)$ is a Cartan involution of $\GG_{\RR}^{\ad}$ and that no $\QQ$-simple factor of $\GG^{\ad}$ has compact real points; contrary to the case of a Shimura variety we do not assume that the Hodge structure $\ad \circ h$ on $\Lie(\GG)_{\CC}$ is of type
$\{(1,-1), (0,0), (-1,1)\}$.

By the work of Griffiths-Schmid \cite{Griffiths-Schmid-homogeneous-complex-manifolds}, reinterpreted in the language of \cite{Deligne-Shimura-varieties} (see also \cite{Carayol-laumon-volume} and \cite{Patrikis-MT-groups} for the translation), one has a projective system of Griffiths-Schmid (complex) manifolds
\begin{equation}
\label{eq-gs-tower}
(\gsgxk)_{\Kcal \subset \gofaf},
\end{equation} indexed by (neat) open compact subgroups $\Kcal$ of $\gofaf$. The system~\eqref{eq-gs-tower} admits an action of $\gofaf$ in the sense of \cite[2.1.4]{Deligne-Shimura-varieties}. In \cite{Griffiths-Robles-Toledo-not-algebraic}, it is shown that $\gsgxk$ is not algebraic -- in the sense that $\gsgx$ is not the analytification $X^{\an}$ of a $\CC$-scheme $X$ -- unless the following condition, termed the `classical case' \footnote{This condition is sometimes also called the `semi-classical' case, to distinguish it from the case of an actual Shimura datum.} in \loccitn, is satisfied:

\begin{enumerate}
\item[(Cl)]
\label{item-classical-case} There exists a Shimura datum $(\GG, \XX')$ (with the same underlying group $\GG$) such that the natural smooth map $\XX \to \XX'$ is holomorphic.
\end{enumerate}
One way to understand the map $\XX \to \XX'$ is as follows:
The centralizer of $h' \in \XX'$ is a maximal connected, compact modulo center subgroup of $\gofr$. The centralizer of any $h \in \XX$ is also connected and compact modulo center, but possibly not maximal.
Thus one can choose $h \in \XX$ so that $\stab(h) \subset \stab(h')$.
The map $\XX \to \XX'$ is then simply the projection $\gofr/\stab(h) \to \gofr/\stab(h')$. 

Condition (Cl) is equivalent to $\gsgxk$ being the analytification of a (partial) flag space associated to the Shimura variety  $\shgxk$ as defined in \cite{Goldring-Koskivirta-zip-flags} (an important special case is already discussed in \cite[\S9.1]{Goldring-Koskivirta-Strata-Hasse}).
Briefly, the (partial) flag spaces of a Shimura variety are algebraic fibrations over the Shimura variety with (partial) flag variety fibers.
For Shimura varieties of Hodge type, the integral models of Kisin \cite{Kisin-Hodge-Type-Shimura} and Vasiu \cite{Vasiu-Preabelian-integral-canonical-models} can be used to construct integral models of the associated flag spaces, see \cite{Goldring-Koskivirta-zip-flags}.

Following Deligne, one sets $\mu=\mu_h=(h\otimes \CC)(z,1)$ for $h \in \XX$ to obtain a cocharacter $\mu \in X_*(G)$ and thus a pair $(G, [\mu])$. Conversely, the pair $(\GG, [\mu])$ almost determines a pair $(\GG', \XX)$; there are subtleties having to do with the center and the real form $\GG'_\RR$ determined by $\mu$ may be different than $\GG_\RR$, see \cite[1.2.4]{Deligne-Shimura-varieties} for details.
\subsubsection{Algebraicity of Griffiths-Schmid manifolds} \label{sec-alg-GS}
Notwithstanding the negative result of \cite{Griffiths-Robles-Toledo-not-algebraic}, there are several poignant reasons to believe that there is a hidden algebraicity underlying {\it all} Griffiths-Schmid manifolds. Some such reasons which arise from Hodge theory are discussed in the aforementioned references of Griffiths and his collaborators. We shall now briefly mention the reason underlying Carayol's program.

Carayol observed cases where automorphic representations $\pi$ with degenerate limit of discrete series  archimedean component contribute to the cohomology of non-classical Griffiths-Schmid manifolds.
More precisely, this means that one has a $\gofaf$-equivariant embedding of the finite part $\pi_f$ into $\varinjlim_{\Kcal} H^i(\gsgxk, \llambda) $, for some $i$ and some automorphic line bundle $\llambda$. When $\gsgxk$ is also compact, Carayol shows that in fact every cohomology class in $H^i(\gsgxk, \llambda)$ is represented by automorphic forms.
The relationship between automorphic representations and the cohomology of Griffiths-Schmid manifolds observed by Carayol in particular examples (see also Kerr \cite{KerrSp4} and Charbord \cite{Charbord-thesis} for further examples) are expected to hold for all Griffiths-Schmid manifolds.

At this point, an intuition for some form of algebraicity for Griffiths-Schmid manifolds comes from the Langlands program.
The automorphic representations $\pi$ which contribute to the cohomology of Griffiths-Schmid manifolds are all necessarily $C$-algebraic in the sense of Buzzard-Gee \cite{Buzzard-Gee-conjectures}.
The Langlands program conjectures that $C$-algebraic automorphic representations $\pi$ should enjoy a wide variety of algebraicity properties.
For example, the Hecke eigenvalues (Satake parameters) of $\pi$ should be algebraic numbers and there should be a compatible system of Galois representations (ultimately a motive) associated to  $\pi$.
See \loccit for some precise conjectures along these lines.

Combining the remarks above about the link between cohomology and automorphic representations on the one hand and the Langlands program on the other, one is led to suspect, as Carayol did, that at least the coherent cohomology of automorphic line bundles on Griffiths-Schmid manifolds is deeply algebraic; for example that it should admit a $\qbar$-structure. Since properties of the cohomology of a space $X$ should reflect those of $X$ itself, we are led to ask:
\begin{question}
Is there a generalized notion of algebraicity which is satisfied by all Griffiths-Schmid manifolds?
\label{q-further-motivation-alg-GS}\end{question}

\subsubsection{Geometrization of \texorpdfstring{$\gzipmu$}{G-Zip}} \label{sec-geometrization-gzipmu}

The underlying topological space of $\GZip^\mu$ is a finite set of points. Thus it seems that $\GZip^\mu$ lacks some global geometric richness.  One way to apply the theory of $\GZip^\mu$ to schemes $X$ is to study morphisms $X \to \GZip^\mu$. This raises two problems: The first is to exhibit interesting examples of $X \to \GZip^\mu$. The second was singled out as Question B in the introduction to \cite{Goldring-Koskivirta-Diamond-I}: To what extent is the geometry of $X$ controlled by $\GZip^\mu$ and properties of a morphism $X \to \GZip^\mu$? 

Regarding the first problem, Shimura varieties of Hodge type furnish important examples of morphisms $X \to \GZip^\mu$. More precisely, suppose $\gx$ is a Shimura datum of Hodge type, $p$ is a prime at which $\GG$ is unramified and $(G, \mu)$ arises from $(\GG, [\mu])$ by reduction mod $p$. If $\Kcal \subset \gofaf$ is hyperspecial at $p$, then a theorem of Zhang asserts that there is a smooth morphism from the special fiber of the Kisin-Vasiu $p$-integral model of $\shgx_{\Kcal}$ to $\gzipmu$, \cite{ZhangEOHodge}. 

Concerning the second problem, the works \cite{Goldring-Koskivirta-Strata-Hasse,Goldring-Koskivirta-zip-flags,Goldring-Koskivirta-Diamond-I,Brunebarbe-Goldring-Koskivirta-Stroh-ampleness} give various positive examples of geometric properties that are to a large extent controlled by properties of a morphism $X \to \GZip^\mu$. These include the existence of global sections and positivity of certain vector bundles on $X$, as well as the affineness of Ekedahl-Oort strata.

Our second question is then:
\begin{question} Is there a generalization of Zhang's morphism \begin{equation}
    X\to \gzipmu
\end{equation}  
for more general pairs $(G, \mu)$?
\label{q-scheme-to-GZip}
\end{question}

In analogy with the case of Hodge-type Shimura varieties, a more optimistic and more precise question would be to ask for an entire system $(X_\Kcal)_\Kcal \subset \gofaf$ mapping to $\GZip^\mu$, where $\Kcal$ runs over subgroups which are (a fixed) hyperspecial at $p$. 

\subsubsection{Extending the link between Griffiths-Schmid and \texorpdfstring{$\gzipmu$}{G-Zip}
} \label{sec-extending-link-GS-gzipmu}

Since Shimura varieties of Hodge-type are a very special case of Griffiths-Schmid manifolds, Zhang's theorem provides a direct link between a subclass of Griffiths-Schmid manifolds and subclass of the $\GZip^\mu$.  

\begin{question} Does the link between the stacks $\gzipmu$ and the Griffiths-Schmid manifolds $\gsgx_{\Kcal}$ extend beyond the case that $\mu$ is of Hodge type and even beyond the case that $\mu$ is minuscule? \label{q-extending-link-GS-gzipmu}
\end{question}
In a small number of cases, this is achieved by the theory of Zip flags, see \cite[\S\S2,9.1]{Goldring-Koskivirta-Strata-Hasse} and especially \cite{Goldring-Koskivirta-zip-flags}, where the scheme $X$ is given by the partial flag spaces of a Shimura variety.

\subsubsection{The case of quasi-constant cocharacters} \label{sec-further-motivation-test-case} Returning to the fundamental notion of this paper -- the quasi-constant condition -- we conclude with:
\begin{question}
\label{q-further-motivation-test-case} Is there a special approach or simpler answer to \textnormal{Questions~\ref{q-further-motivation-alg-GS}},\textnormal{~\ref{q-scheme-to-GZip}} and\textnormal{~\ref{q-extending-link-GS-gzipmu}}
when $\mu$ is quasi-constant?

\end{question}

In this paper, we gave an example of a different question about cocharacter data, namely Question~\ref{q-principally-pure} on the uniform principal purity of $\gzipmu$, where we were able to provide a positive answer to the analogue of Question~\ref{q-further-motivation-test-case}.

\AtBeginEnvironment{equation}{\setcounter{equation}{\value{subsection}}}{}{}
\AtEndEnvironment{equation}{\stepcounter{subsection}}{}{}
\appendix
\numberwithin{equation}{subsection}
\section{Explicit bounds for uniform principal purity} 

\label{app-bounds}

Return to the setting of \S\ref{sec-intro-hasse} and \S\ref{sec-purity}: $G$ is a connected, reductive $\fp$-group, $\mu \in X_*(G)$ and $L=\cent(\mu)$. For simplicity, we shall assume throughout the appendix that $G^{\ad}$ is absolutely simple, \ie that $G^{\ad}_{\fpbar}$ is simple. Fundamental weights will refer to those of $G^{\ad}$.

It is natural to seek an explicit bound $C(\Delta, \Delta_L)$, depending only on the root system of $G$ and the type of $L$, such that $\GZip^{\mu}$ is uniformly principally pure provided that $p > C(\Delta, \Delta_L)$. When $\mu$ is quasi-constant, it was shown in \Th~\ref{th-hasse-quasi-constant} that uniform principal purity holds for all $p$, \ie one may take $C(\Delta, \Delta_L)=1$. Below we record an explicit upper bound for $C(\Delta, \Delta_L)$, for every irreducible $\Delta$ and every Levi type $\Delta_L$.    

To this end, we use an elementary application of our result on group-theoretical Hasse invariants \cite[\Th~3.2.3]{Goldring-Koskivirta-Strata-Hasse} (which was recalled in \S\ref{sec-purity}). 
Recall also the notation for fundamental weights in \S\ref{sec-fund-weights}. Given $S \subset \Delta$, write $\eta(S)=\sum_{\alpha \in S} \eta(\alpha).$  

\begin{lemmaapp}
\label{lem-app-fund-wts-pure}
Assume $\eta(\Delta\setminus \Delta_L)$ is orbitally $p$-close \textnormal{(\S\ref{sec-purity})}. If $\chi \in X^*(T^{\ad})$ is a strictly negative multiple of $\eta(\Delta\setminus \Delta_L)$, then $\pr^*\chi$ is a Hasse generator for $\GZip^\mu$ \textnormal{(\S\ref{sec-intro-hasse})}. Consequently, 
$\GZip^\mu$ is uniformly principally pure.  
\end{lemmaapp}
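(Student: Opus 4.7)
The plan is to deduce this lemma as a direct application of \cite[\Th~3.2.3]{Goldring-Koskivirta-Strata-Hasse} (as recalled in \S\ref{sec-purity}), which asserts that any $(p, L)$-admissible character is a Hasse generator for $\GZip^\mu$. Hence it suffices to verify that $\pr^*\chi$ is both $L$-ample and orbitally $p$-close; the uniform principal purity is then immediate from the definition (\S\ref{sec-intro-hasse}).

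First, I would establish $L$-ampleness by a direct computation with fundamental weights. Writing $\chi=c\,\eta(\Delta\setminus\Delta_L)$ with $c<0$, the defining property of the fundamental weights (\S\ref{sec-fund-weights}) gives
\[
\langle \eta(\Delta\setminus\Delta_L), \beta^{\vee} \rangle = \begin{cases} 1 & \text{if } \beta \in \Delta\setminus\Delta_L, \\ 0 & \text{if } \beta \in \Delta_L. \end{cases}
\]
Multiplying by $c<0$, one obtains $\langle \chi, \beta^{\vee}\rangle<0$ for every $\beta\in\Delta\setminus\Delta_L$. Since $\pr:G\twoheadrightarrow G^{\ad}$ is a central isogeny inducing an identification of (co)roots (\S\ref{sec-simply-conn-adjoint}), the pullback $\pr^*\chi$ satisfies the same pairings with each simple coroot. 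This delivers $L$-ampleness.

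Second, orbital $p$-closeness~\eqref{eq-orb-p-close} is a condition on ratios of pairings $\langle\,\cdot\,,\alpha^{\vee}\rangle$, hence is invariant under multiplication by a nonzero scalar and under $\pr^*$ (for the same reason as above, the pairings with coroots are preserved). Thus the hypothesis that $\eta(\Delta\setminus\Delta_L)$ is orbitally $p$-close transfers directly to $\pr^*\chi$.

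Combining the two steps, $\pr^*\chi$ is $(p,L)$-admissible, and \cite[\Th~3.2.3]{Goldring-Koskivirta-Strata-Hasse} gives that it is a Hasse generator for $\GZip^\mu$, proving the lemma. There is no real obstacle here: the argument is essentially a bookkeeping exercise combining the standard defining property of fundamental weights with the fact that pullback along a central isogeny preserves root-theoretic pairings. The only minor subtlety is ensuring that $\chi$ can be chosen integral in $X^*(T^{\ad})$, but this is already built into the hypothesis and in any case does not interact with the ratio-based condition~\eqref{eq-orb-p-close}.
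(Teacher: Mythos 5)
Your argument is correct and follows essentially the same route as the paper: both deduce the statement from \cite[\Th~3.2.3]{Goldring-Koskivirta-Strata-Hasse} by checking that the orbitally $p$-close condition is invariant under nonzero scalar multiples and under $\pr^*$, and that a negative multiple of $\eta(\Delta\setminus\Delta_L)$ is $L$-ample. Your version merely spells out the fundamental-weight pairings a bit more explicitly, which is a harmless elaboration of the same bookkeeping.
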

\begin{proof} Suppose $\chi$ is  a character of $T^{\ad}$ which is a negative multiple of $\eta(\Delta \setminus \Delta_L)$.
By definition, the condition `orbitally $p$-close' is closed under non-zero scalar multiples: 
For every $c \in \QQ^{\times}$,  $\xi \in X^*(T)_{\QQ}$ is orbitally $p$-close if and only  $c\xi$ is. 
Further $\xi$ is orbitally $p$-close if and only if $\pr^*(\chi)$ is. 
Thus both $\chi$ and $\pr^*\chi$ are orbitally $p$-close by assumption. 
Since $\chi$ is a negative multiple of $\eta(\Delta\setminus \Delta_L)$, $\pr^*\chi$ is $L$-ample (\S\ref{sec-purity}). By \opcitn, $\chi$ is a Hasse generator of $\GZip^\mu$.  
\end{proof}
One can simplify the computation of whether $\eta(\Delta\setminus \Delta_L)$ is orbitally $p$-close using a reduction to the $\Delta$-dominant elements of $\Phi^{\vee}$. If $\Dfr$ is simply-laced (\S\ref{sec-dynkin}), then $\Phi^{\vee}$ contains a unique $\Delta$-dominant element, namely the highest coroot $ {}^h\alpha^{\vee}$ (\S\ref{sec-root-mult}). Otherwise $\Dfr$ is multi-laced and then $\Phi^{\vee}$  contains precisely two $\Delta$-dominant elements: the highest coroot ${}^h\alpha^{\vee}$ and the coroot $({}^h\alpha)^{\vee}$ corresponding to the highest root.
\begin{lemmaapp}\ 
\label{lem-orb-p-close-highest-root}
\begin{enumerate}
\item 
\label{item-lem-app-simply}
Assume that either $\Dfr$ is simply-laced or $\Delta \setminus \Delta_L$ contains a short root. Then  $\eta(\Delta\setminus \Delta_L)$   
is orbitally $p$-close if and only if $\langle \eta(\Delta\setminus \Delta_L),{}^h\alpha^{\vee} \rangle  \leq p-1 $.
\item 
\label{item-lem-app-multi}
Assume $\Dfr$ is multi-laced and every root in $\Delta\setminus \Delta_L$ is long. Then $\eta(\Delta\setminus \Delta_L)$   
is orbitally $p$-close if and only if  $\langle \eta(\Delta\setminus \Delta_L),({}^h\alpha)^{\vee} \rangle  \leq p-1 $.

\end{enumerate}  
\end{lemmaapp}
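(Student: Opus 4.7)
The plan is to reduce the orbitally $p$-close condition to a bound on each $W$-orbit of coroots separately and then identify the binding orbit. Since $G^{\ad}$ is absolutely simple, $\galfp$ preserves root lengths, so each Weyl orbit of coroots in $\Phi^{\vee}$ is stable under $W \rtimes \galfp$ and the condition decouples into one inequality $\max_O |\langle \chi, -\rangle|/m_O \leq p-1$ per Weyl orbit $O \subset \Phi^{\vee}$, where $m_O$ is the minimum positive absolute pairing on $O$ (the inequality is vacuous if all pairings on $O$ vanish). For $\chi=\eta(\Delta\setminus\Delta_L)$ dominant, the maximum on each orbit is attained at the unique dominant coroot in the orbit, namely ${}^h\alpha^{\vee}$ for the long orbit and $({}^h\alpha)^{\vee}$ for the short one. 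For every $\beta \in \Delta \setminus \Delta_L$ one has $\langle \chi, \beta^{\vee}\rangle = 1$, which will control $m_O$ whenever $\beta^{\vee} \in O$.

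For part~\ref{item-lem-app-simply}, in the simply-laced case the unique Weyl orbit contains all simple coroots, so $m_O=1$ and the condition collapses to $\langle \chi, {}^h\alpha^{\vee}\rangle \leq p-1$. If $\Dfr$ is multi-laced and $\Delta\setminus\Delta_L$ contains a short root $\beta_0$, then $\beta_0^{\vee}$ is a long simple coroot, so $m_{\mathrm{long}}=1$ and the long-orbit condition reads $\langle \chi, {}^h\alpha^{\vee}\rangle \leq p-1$. For the short orbit, I would use that ${}^h\alpha^{\vee}$ is the unique maximum of $\Phi^{\vee}$ in the simple-coroot partial order, so ${}^h\alpha^{\vee}-({}^h\alpha)^{\vee}$ is a nonnegative integer combination of simple coroots; pairing with dominant $\chi$ yields $\langle \chi, ({}^h\alpha)^{\vee}\rangle \leq \langle \chi, {}^h\alpha^{\vee}\rangle$, so combined with $m_{\mathrm{short}} \geq 1$ the short-orbit inequality follows automatically from the long one.

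The hard part is~\ref{item-lem-app-multi}, where no long simple coroot pairs to $1$ with $\chi$, so $m_{\mathrm{long}}$ is a priori $>1$ and it is not evident that the short-orbit bound dominates the long-orbit one. The short side is immediate: any long $\gamma \in \Delta\setminus\Delta_L$ has short simple coroot $\gamma^{\vee}$ with $\langle \chi, \gamma^{\vee}\rangle=1$, giving $m_{\mathrm{short}}=1$ and a short-orbit condition $\langle \chi, ({}^h\alpha)^{\vee}\rangle \leq p-1$. To treat the long orbit, I plan to exploit the lacing number $k \in \{2,3\}$ of $\Dfr$: for a short root $\delta$ with corresponding long coroot $\delta^{\vee}$, writing $\delta=\sum_\beta c_\beta(\delta)\beta$ and using $\delta^{\vee}=(2/|\delta|^2)\delta$, $\beta^{\vee}=(2/|\beta|^2)\beta$, the coefficient of $\gamma^{\vee}$ in $\delta^{\vee}$ equals $k\, c_\gamma(\delta)$ whenever $\gamma$ is long. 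Hence every long-orbit pairing satisfies $\langle \chi, \delta^{\vee}\rangle = k\sum_{\gamma \in \Delta\setminus\Delta_L} c_\gamma(\delta) \in k\ZZ$, so $m_{\mathrm{long}} \geq k$.

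To conclude I would prove $\langle \chi, {}^h\alpha^{\vee}\rangle \leq k \langle \chi, ({}^h\alpha)^{\vee}\rangle$. Writing ${}^h\alpha^{\vee}=(\alpha^*)^{\vee}$ for the highest short root $\alpha^*$, the same coefficient computation gives $\langle \chi, {}^h\alpha^{\vee}\rangle = k\sum_{\gamma \in \Delta\setminus\Delta_L} c_\gamma(\alpha^*)$, while $\langle \chi, ({}^h\alpha)^{\vee}\rangle=\sum_{\gamma \in \Delta\setminus\Delta_L} m(\gamma)$ since each such $\gamma$ is long and $|\gamma|^2=|{}^h\alpha|^2$. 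Because ${}^h\alpha$ is the unique maximal root, ${}^h\alpha-\alpha^*$ is a nonnegative integer combination of simple roots, so $m(\gamma) \geq c_\gamma(\alpha^*)$ term-by-term and the asserted inequality follows. Combined with $m_{\mathrm{long}} \geq k$ this yields $\langle \chi, {}^h\alpha^{\vee}\rangle/m_{\mathrm{long}} \leq \langle \chi, ({}^h\alpha)^{\vee}\rangle$, so the short-orbit bound implies the long-orbit bound and~\ref{item-lem-app-multi} is established.
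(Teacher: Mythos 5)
Your proof is correct, and part~(1) follows the same route as the paper. For part~(2), however, you take a genuinely different and more conceptual path. The paper reduces, exactly as you do, to showing that the long-orbit ratio does not exceed $\langle\eta(\Delta\setminus\Delta_L),({}^h\alpha)^{\vee}\rangle$, but then simply asserts this ``can be checked by hand, using the identifications recalled in the proofs of Lemmas~\ref{lem-G2},~\ref{lem-F4} and~\ref{lem-BC}'' --- i.e.\ a type-by-type verification in $B_n$, $C_n$, $F_4$, $G_2$. You instead give a uniform argument: (a) since every root in $\Delta\setminus\Delta_L$ is long, passing from a short root $\delta$ to its (long) coroot $\delta^{\vee}$ multiplies the coefficient of each long simple coroot by the lacing number $k$, forcing every long-orbit pairing into $k\ZZ$ and hence $m_{\mathrm{long}} \geq k$; (b) comparing ${}^h\alpha$ with the highest short root $\alpha^{*}$ via root-poset dominance yields $\langle \chi, {}^h\alpha^{\vee}\rangle \leq k\,\langle \chi, ({}^h\alpha)^{\vee}\rangle$; combining these, the long-orbit inequality is subsumed by the short-orbit one. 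This buys a cleaner, case-free proof of part~(2) at the cost of a slightly longer argument involving the length ratio $k$, while the paper's ``by hand'' route is shorter to state but implicitly invokes the explicit computations of \S\ref{sec-proof-simple}. Both are valid; yours is the more self-contained.
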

\begin{proof} Since $G$ is assumed absolutely simple, the orbits of $W \rtimes \galfp$ on $\Phi$ and $\Phi^{\vee}$ agree with those of $W$. Recall that for every positive coroot $\alpha^{\vee}$, the difference ${}^h\alpha^{\vee}-\alpha^{\vee}$ is a nonnegative $\ZZ$-linear combination of simple coroots. 
Since the fundamental weights are $\Delta$-dominant, for $\alpha$ ranging over $\Phi$, the value of $\langle \eta(\Delta\setminus \Delta_L), \alpha^{\vee} \rangle$ is maximal when $\alpha^{\vee}={}^h\alpha^{\vee}$. 
Moreover, when $\Dfr$ is multi-laced, the highest coroot is always long.  

Under either assumption in~\ref{item-lem-app-simply}, $\Delta^{\vee}\setminus \Delta^{\vee}_L$ contains a long coroot $\gamma^{\vee}$. 
Thus $\gamma^{\vee}$ is in the same Weyl group orbit as ${}^h\alpha^{\vee}$. Since $\langle \eta(\Delta\setminus \Delta_L), \gamma^{\vee} \rangle=1$, as $\alpha$ ranges over those roots in $\Phi$ satisfying $\langle \eta(\Delta \setminus \Delta_L), \alpha^{\vee} \rangle \neq 0$ and  $\sigma$ ranges over $W \rtimes \galfp$,
 the maximal value of 
\begin{equation} \tag{A.3}
\label{eq-lem-app}
\left| \frac{ \langle \eta(\Delta\setminus \Delta_L), \sigma\alpha^{\vee} \rangle}{\langle \eta(\Delta\setminus \Delta_L), \alpha^{\vee} \rangle} \right| \end{equation} is $\langle \eta(\Delta\setminus \Delta_L), {}^h\alpha^{\vee} \rangle$. This proves~\ref{item-lem-app-simply}.

Now consider~\ref{item-lem-app-multi}. The hypothesis ensures that every coroot in $\Delta^{\vee} \setminus \Delta_L^{\vee}$ is in the same $W$-orbit as $({}^h\alpha)^{\vee}$. The same reasoning as in ~\ref{item-lem-app-simply} shows that the maximal value attained in~\ref{eq-lem-app}
is $\langle \eta(\Delta\setminus \Delta_L), ({}^h\alpha)^{\vee} \rangle$, at least when $\alpha^{\vee}$ is in the short $W$-orbit of coroots. It remains to check that no larger value occurs in~\eqref{eq-lem-app} when $\alpha^{\vee}$ is in the long orbit of coroots. This can be checked by hand, using the identifications recalled in the proofs of Lemmas~\ref{lem-G2},~\ref{lem-F4} and~\ref{lem-BC}. 
\end{proof}
\begin{rmkapp} The pairing $\langle \eta(\Delta\setminus \Delta_L),{}^h\alpha^{\vee} \rangle$ equals the sum of the coroot multiplicities $\sum_{\alpha \in \Delta \setminus \Delta_L} m^{\vee}(\alpha)$ (\S\ref{sec-root-mult}). These multiplicities are given in both \cite[\Chap~VI, Planches I-IX]{bourbaki-lie-4-6} and \cite[Appendix C.1-C.2]{Knapp-beyond-intro-book}.
\end{rmkapp}
For every Dynkin diagram $\Dfr$ of a reduced and irreducible root system, $\alpha \in \Delta$ and $\Delta_L=\Delta\setminus \{\alpha\}$,  the table below gives the bound $C(\Delta,\Delta_L)$ for uniform principal purity  gotten by combining Lemmas~\ref{lem-app-fund-wts-pure} and~\ref{lem-orb-p-close-highest-root}.  The names of the simple roots refer to \cite[\Chap~VI, Planches I-IX]{bourbaki-lie-4-6}; in the multi-laced case they agree with the notation of Lemmas~\ref{lem-G2},~\ref{lem-F4} and~\ref{lem-BC}.
\begin{table}[h] \label{table-bounds}
        \caption{Bounds for uniform principal purity}

{\renewcommand{\arraystretch}{1.25}      
\begin{tabular}{|c|c|c|}
\hline
 Type of $\Dfr$ & Simple root $\alpha$ & $C(\Delta, \Delta_L)$ \\ \hline
 $A_n$ & $\alpha_i=e_i-e_{i+1}$, ($1 \leq i \leq n$) & $1$ \\ \hline
 \multirow{2}{*}{$B_n$} & $\alpha_1=e_1-e_2$, $\alpha_n=e_n$ & $1$ \\ 
\cline{2-3} & $\alpha_i=e_i-e_{i+1}$, ($2 \leq i \leq n-1$) & $2$ \\  \hline
\multirow{2}{*}{$C_n$} & $\alpha_1=e_1-e_2$, $\alpha_n=2e_n$ & $1$ \\ 
\cline{2-3} & $\alpha_i=e_i-e_{i+1}$, ($2 \leq i \leq n-1$) & $2$ \\  \hline
\multirow{2}{*}{$D_n$} & $\alpha_1=e_1-e_2$, $\alpha_{n-1}=e_{n-1}-e_n$, $\alpha_{n}=e_{n-1}+e_n$ & $1$ \\ 
\cline{2-3} & $\alpha_i=e_i-e_{i+1}$, ($2 \leq i \leq n-2$) & $2$ \\  \hline
$G_2$ & $\alpha_1,\alpha_2$  & $2$ \\ 
\hline
\multirow{2}{*}{$F_4$} & $\alpha_1, \alpha_4$ & $2$ \\ 
\cline{2-3} & $\alpha_2,\alpha_3$ & $3$ \\  \hline
\multirow{3}{*}{$E_6$} & $\alpha_1, \alpha_6$ & $1$ \\ 
\cline{2-3} & $\alpha_2,\alpha_3, \alpha_5$ & $2$ \\ 
\cline{2-3} & $\alpha_4$ & $3$ \\  \hline
\multirow{3}{*}{$E_7$} & $\alpha_7$ & $1$ \\ 
\cline{2-3} & $\alpha_1,\alpha_2, \alpha_6$ & $2$ \\ 
\cline{2-3} & $\alpha_3, \alpha_4, \alpha_5$ & $3$ \\  \hline
\multirow{3}{*}{$E_8$} & $\alpha_1,\alpha_8$ & $2$ \\ 
\cline{2-3} & $\alpha_2,\alpha_3, \alpha_6,\alpha_7$ & $3$ \\ 
\cline{2-3} & $\alpha_4, \alpha_5$ & $5$ \\  \hline
\end{tabular}}
\end{table}

An immediate corollary of the table is:
\begin{corapp} \ 
\begin{enumerate}
\item If $L$ is maximal and $p \geq 7$, then $\GZip^\mu$ is uniformly principally pure. 
\item If $L$ is maximal, $G$ is classical (type $A_n,B_n,C_n$ or $D_n$) or of type $G_2$ and $p \geq 3$, then $\GZip^\mu$ is uniformly principally pure. 
\end{enumerate}

\end{corapp}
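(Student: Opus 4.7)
The plan is essentially a table lookup combined with Lemmas~\ref{lem-app-fund-wts-pure} and~\ref{lem-orb-p-close-highest-root}. Since $L$ is assumed maximal, $\Delta \setminus \Delta_L$ is a singleton $\{\alpha\}$, so $\eta(\Delta \setminus \Delta_L) = \eta(\alpha)$ and the row of the table indexed by $\alpha$ gives the bound $C(\Delta, \Delta_L)$ directly. Once the appropriate maxima are read off, the two statements follow by invoking Lemma~\ref{lem-app-fund-wts-pure} (which reduces uniform principal purity to the inequality $p > C(\Delta, \Delta_L)$, via the orbital $p$-closeness of $\eta(\alpha)$ tested on a dominant coroot by Lemma~\ref{lem-orb-p-close-highest-root}).

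For part (a), I would scan the table across every Dynkin type and every choice of simple root, and observe that the maximum value attained is $5$, occurring in type $E_8$ at $\alpha_4$ and $\alpha_5$ (the two simple roots adjacent to the branching vertex, for which the coefficient in the highest coroot is largest). Since the smallest prime strictly greater than $5$ is $7$, the hypothesis $p \geq 7$ forces $p > C(\Delta, \Delta_L)$ uniformly, and Lemma~\ref{lem-app-fund-wts-pure} then yields the uniform principal purity of $\GZip^\mu$.

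For part (b), I would restrict attention to the rows of types $A_n$, $B_n$, $C_n$, $D_n$ and $G_2$. Inspection of the table shows the maximum of $C(\Delta, \Delta_L)$ over all single-root removals in these types equals $2$: the bound is always $1$ in type $A_n$; it is $1$ at the two extremal simple roots and $2$ at the interior ones in types $B_n$, $C_n$ and $D_n$; and it is $2$ for each simple root of $G_2$. Hence $p \geq 3$ implies $p > C(\Delta, \Delta_L)$ uniformly in the restricted list, and Lemma~\ref{lem-app-fund-wts-pure} again delivers the conclusion.

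There is essentially no genuine obstacle; the substantive work has already been carried out in the proofs of Lemmas~\ref{lem-app-fund-wts-pure} and~\ref{lem-orb-p-close-highest-root} and in the compilation of the table. The only step worth a sanity check is the extraction of the maxima: in the simply-laced cases this amounts to reading off the coefficient $m^{\vee}(\alpha)$ of $\alpha$ in the highest coroot from the Bourbaki planches, while in the multi-laced cases one invokes Lemma~\ref{lem-orb-p-close-highest-root}\ref{item-lem-app-multi} to handle those simple roots $\alpha \in \Delta \setminus \Delta_L$ which are long (namely $\alpha_n$ in $B_n$ and $\alpha_1, \alpha_n$ in $C_n$, plus the appropriate ones in $F_4$, which however lies outside the scope of part (b)).
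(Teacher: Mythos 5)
Your proposal is essentially the paper's own treatment: the paper offers no explicit proof, presenting the result as ``an immediate corollary of the table.'' Reading off the maximum of $C(\Delta,\Delta_L)$ over single-root removals (5, attained in $E_8$, for the full list; 2 among the classical types and $G_2$) and then invoking Lemma~\ref{lem-app-fund-wts-pure} (with Lemma~\ref{lem-orb-p-close-highest-root} supplying the orbital $p$-closeness test) is exactly the intended argument, and it is correct.

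Two small slips in your closing sanity-check parenthetical, which do not affect the conclusion but would matter if one re-derived the table entries: in the Bourbaki conventions that the paper uses, $\alpha_n=e_n$ in type $B_n$ is the unique \emph{short} simple root (so the long simple roots of $B_n$ are $\alpha_1,\ldots,\alpha_{n-1}$), while in type $C_n$ the unique \emph{long} simple root is $\alpha_n=2e_n$ and $\alpha_1=e_1-e_2$ is short; you have these reversed. Also, in $E_8$, $\alpha_4$ is the branching vertex itself (of coroot multiplicity 6) rather than adjacent to it; $\alpha_5$ is the adjacent one with multiplicity 5, and both give the table value $C=5$ only because the paper records the largest prime below the multiplicity.
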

\begin{rmkapp}
In all but the types $E_7, E_8$, the coroot multiplicities satisfy $m^{\vee}(\alpha) \in \{1,2,3\}$, so a bound $C(\Delta, \Delta_L)$ is obtained for a non-maximal $L$ by adding the bounds given in the table for the various $\alpha \in \Delta\setminus \Delta_L$. 
For $E_7,E_8$ there are $\alpha \in \Delta$ for which $m^{\vee}(\alpha)=m(\alpha)$ is composite (and >1): $m^{\vee}(\alpha_4)=4$ for $E_7$, while $m^{\vee}(\alpha_3)=m^{\vee}(\alpha_6)=4$ and $m^{\vee}(\alpha_4)=6$ for $E_8$. In these cases, one must add the true coroot multiplicity rather than the value in the table (the table gives the largest prime smaller than the multiplicity). For example, let $\Delta \setminus \Delta_L=\{\alpha_1,\alpha_3,\alpha_4\}$ in type $E_8$. Then the bound obtained is $C(\Delta, \Delta_L)=2+4+6=12$, while adding the values in the table gives $2+3+5=10$. In this example, we do not know if uniform principle purity holds for $p=11$.  
\end{rmkapp}

\bibliographystyle{plain}
\bibliography{biblio_overleaf}

\end{document}